\documentclass[11pt,reqno]{amsart}
\usepackage{amsmath, color,cite}
\usepackage{amssymb}
\usepackage{amsfonts}
\usepackage{graphicx}
\usepackage{hyperref}

\allowdisplaybreaks


\textwidth=15.6cm \textheight=21.4cm \hoffset=-1.4cm \voffset=-0.7cm
\setlength{\headheight}{0cm}
\newtheorem{theorem}{Theorem}[section]

\newtheorem{lemma} {Lemma}[section]
\newtheorem{proposition} {Proposition}[section]

\theoremstyle{definition}
\newtheorem{definition}{Definition}[section]
\newtheorem{remark} {Remark}[section]

\numberwithin{equation}{section}

\newcommand{\ba}{\begin{aligned}}
\newcommand{\ea}{\end{aligned}}
\newcommand{\be}{\begin{equation}}
\newcommand{\ee}{\end{equation}}
\newcommand{\bnn}{\begin{eqnarray*}}
\newcommand{\enn}{\end{eqnarray*}}
\newcommand{\n}{\rho}
\newcommand{\ep}{\epsilon}

\newcommand{\g}{\gamma}
\newcommand{\p}{\partial}

\renewcommand{\div}{{\rm div}}
\newcommand{\na}{\nabla}
\newcommand{\la}{\label}

\def\u{{\bf u}}

\def\o{\Omega}
\def\de{\delta}
\def\si{\sigma}
\def\ti{\tilde}
\def\r{\mathbb{R}^{3}}
\def\lap{\triangle}
\def\a{\alpha}

\def\th{\theta}
\begin{document}

\title[Weak solutions   for   stationary active liquid crystal flows]
{Weak solutions to the equations of  stationary compressible   flows in active liquid crystals} 

\author[Z. Liang]{Zhilei Liang}
\address{School of Economic Mathematics, Southwestern  University of Finance and Economics, Chengdu  611130,  China}
\email{zhilei0592@gmail.com}

\author[A. Majumdar]{Apala Majumdar}
\address{Department of Mathematics and Statistics, University of Strathclyde, G1 1XQ, UK.}
\email{apala.majumdar@strath.ac.uk}

\author[D. Wang]{Dehua Wang}
\address{Department of Mathematics,  University of Pittsburgh, Pittsburgh, PA 15260, USA}
\email{dwang@math.pitt.edu}

\author[Y. Wang]{Yixuan Wang}
\address{Department of Mathematics,  University of Pittsburgh, Pittsburgh, PA 15260, USA}
\email{yiw119@pitt.edu }

\keywords{Active   liquid crystals,   stationary compressible flows,
Navier-Stokes equations, Q-tensor,   weak solutions,  weak convergence}
\subjclass[2010]{35Q35, 35Q30, 35D35, 76D05, 76A15}

\date{\today}
\begin{abstract} 


The equations of stationary compressible flows of active liquid crystals are considered in a bounded three-dimensional domain. The system consists of the stationary Navier-Stokes equations coupled with the equation of Q-tensors and the equation of the active particles. The existence of weak solutions to the stationary problem is established through a two-level approximation scheme, compactness estimates and weak convergence arguments. 
Novel techniques are developed to overcome the difficulties  due to the lower  regularity of stationary solutions, a Moser-type iteration is used to deal with the  strong  coupling of active particles and fluids, and some weighted estimates on the energy functions are achieved so that the weak solutions can be constructed  for all values of the adiabatic exponent $\gamma>1$.

%

\end{abstract}
\maketitle

\section{Introduction}
Active hydrodynamics refer to dynamical systems that  are continuously driven out of equilibrium state by injected energy effects on small scales and exhibit collective phenomenon on a large scale, for example, bacterial colonies, motor proteins, and living cells \cite{live2,live1,mjr}.  
Active systems have natural analogies with nematic liquid crystals because   the particles exhibit a
  orientational ordering at a high concentration due to the collective motion.
 In comparison with  the  passive nematic liquid crystals,   the system of active hydrodynamics is usually unstable and has novel characteristics such as low Reynolds numbers  and very different spatial and temporal patterns \cite{gi,sai}.  We refer the readers to \cite{mjr,gi,sai,B-T-Y-2014,K-F-K-L-2008,P-K-O-2004,R-Y-2013} and their references for the physical background, applications and modeling of active hydrodynamics. 
 Theoretical studies on active liquid crystals are relatively new and have  attracted a lot of attention   in recent years. 
For example, the evolutionary incompressible flows of active liquid crystals were studied in \cite{chen0,lian} and the evolutionary  compressible flows were investigated in \cite{chen,QW2021}. 
In this paper we are concerned with  the  stationary compressible flows of  active liquid crystals, described by  the following equations    in a bounded domain $\mathcal{O}\subset \r:$
    \begin{equation}\label{1}
\left\{\ba &\div(\n \u)=0,\\
&\u\cdot \na c-\lap c=g_{1},\\
&\div  (\n \u\otimes \u)+\na  \n^{\g}
 - \div\left(\mathbb{S}_{ns}(\na \u)+\mathbb{S}_{1}(Q) +\mathbb{S}_{2}(c,Q)\right)=\n g_{2},\\
 &\u\cdot \na Q+Q\o-\o Q +c_{*}Q{\rm tr}(Q^{2})+\frac{(c-c_{*})}{2}Q-b\left(Q^{2}-\frac{1}{3}{\rm tr}(Q^{2})
 \mathbb{I}\right)-\lap Q=g_{3},\ea \right.
\end{equation}
where    $\n,\,c,\,\u$ denote  the total density,  the concentration of active particles,  and the   velocity field, respectively;   the nematic tensor order parameter $Q$ is a traceless and symmetric $3\times 3$ matrix,  $\n^{\g}$ is the pressure with adiabatic exponent $\g>1,$ and  the  functions   $g_{i}\,(i=1,2,3)$
    are given external force terms.  We denote the Navier-Stokes    stress tensor  by
     \be\la{c0} \mathbb{S}_{ns}(\na \u) =\mu\left(\na \u+(\na \u)^{\top}\right)+\lambda  \div \u\mathbb{I},\ee
where  $(\na \u)^{\top}$ denotes the  transpose of $\na \u$,    $\mathbb{I}$ is
 the identity matrix,  and  the constants  $\mu,\,\lambda$ are   viscous coefficients satisfying the following physical requirement:
\be\la{01}\mu>0,\quad \mu+3\lambda \ge0.\ee
In \eqref{1},  $\o=\frac12(\na \u-(\na \u)^{\top}),$ 
 and the  additional     stress tensors are:  
     \be\la{b0}\mathbb{S}_{1}(Q)=-\na Q \odot \na Q+\frac{1}{2}|\na Q|^{2}\mathbb{I}
     +\frac{1}{2}\left(1+\frac{c_{*}}{2}{\rm tr}(Q^{2}) \right){\rm tr}(Q^{2})\mathbb{I},\ee and
  \be\la{b1}\mathbb{S}_{2}(c,Q)=Q\lap Q-\lap Q Q+\sigma_{*}c^{2}Q,\ee
  where $c_{*}>0$ and $\sigma_{*}\in \mathbb{R}$ are given constants. 
  The corresponding  evolutionary equations of compressible active liquid crystal flows can be found in  \cite{chen}.
 The equations  \eqref{1} can be regarded as the stationary version of  the evolutionary equations in  \cite{chen}  through the time-discretization and play an important role  in the long-time behavior of active hydrodynamics. However,  the mathematical analysis of the stationary equations \eqref{1} remains open. 
The aim of this paper is to construct the weak solutions to the stationary equations \eqref{1} subject to the following  structural conditions:
\be\la{1a}\u=0,\,\,\,\frac{\p c}{\p n}=0,\,\,\,\frac{\p Q}{\p n}=0,\quad {\rm on} \,\,\,\p\mathcal{O},\ee
and
\be\la{0r} \int_{\mathcal{O}} \n(x)dx=m_{1}>0\quad {\rm and}\quad  \int_{\mathcal{O}} c(x)dx=m_{2}>0,\ee
where $n$ denotes the outward unit normal vector of the boundary $\partial\mathcal{O}$, $m_1$ and $m_2$ are given constants. 
We remark that  the two conditions on the total mass and  the    total active particles in \eqref{0r}    guarantee  that the density function $\n$ and the particle concentration $c$ are uniquely determined.
For the modeling and analysis of the $Q$-tensor systems of nematic liquid crystals we refer the readers to \cite{B-M-2010,B-E-1994,L-W-2014,HMP2017,XuX2022} and references therein.

We now introduce some notation that will be frequently used throughout this article.
   For  given symmetric matrices  $A=(a_{ij})_{3\times3}$ and $B=(b_{ij})_{3\times3}$,     denote       ${\rm tr}(AB)=A:B=\sum_{i,j=1}^{3}a_{ij}b_{ij}$,  
   ${\rm tr}(A^2)=|A|^{2},$ and
$S_{0}^{3}:=\{A=(a_{ij})_{3\times3}:\,\,a_{ij}=a_{ji},\,\,{\rm tr}(A)=0\}$.
 For two vectors $a,\,b\in \mathbb{R}^{3}$,  denote $a\cdot b=\sum_{i=1}^{3}a_{i}b_{i}$ and
$a\otimes b =(a_{i}b_{j})_{3\times3}.$   
   Denote the 
   Sobolev spaces (cf. \cite{ad})  by
 \bnn\ba &\, W^{k,p}= W^{k,p}(\mathcal{O}),\quad L^{p}=W^{0,p},\quad H^{k}=W^{k,2},\quad p\in [1,\infty],\quad  k\in \mathbb{N}_{+}.
\ea\enn
Additionally,   we use         $W^{k, p}(\mathcal{O},\,\r)$ and $W^{k,p}(\mathcal{O},\,S_{0}^{3})$   for  the   Sobolev spaces
valued in  $\r$  and $S_{0}^{3}$, respectively.  
We denote by $|\mathcal{O}|$ the measure of the domain $\mathcal{O}$, and write $\int_{\mathcal{O}}f(x)dx$ as  $\int f$ for simplicity of notation.

We shall establish  the existence of weak solutions to the problem \eqref{1}-\eqref{0r} defined as follows.

\begin{definition}\la{defi}
The function $(\n,c,\u,Q)$ is  called   a weak solution to the boundary-value problem  \eqref{1}-\eqref{0r}  if there
is some exponent     $p>\frac{3}{2}$ such that
\bnn\ba &   \qquad  \qquad \n\ge0, \quad c\ge 0 \quad a.e.\,\,{\rm in}\,\,\mathcal{O},\\
& \n\in L^{p}(\mathcal{O}),\quad c\in  H^{2}(\mathcal{O}), \quad  \u\in H_{0}^{1}(\mathcal{O},\r),\quad Q\in H^{2}(\mathcal{O},S_{0}^{3}),\ea\enn 
satisfying the following properties:

(i). The equations \eqref{1} are satisfied in the sense of distributions,  \eqref{1a} holds true in the trace sense,   \eqref{0r}  holds true for given $m_{1}>0$ and $m_{2}>0;$

(ii).  $\eqref{1}_{1}$ is satisfied   in the sense of renormalized solutions, i.e.,
if $(\n,\u)$ is extended by zero outside $\mathcal{O}$, then
$$\div(b(\n) \u)+ \left(b'(\n)\n-b(\n)\right)\div \u=0,\quad \mathcal{D}'(\r),$$
where  $b\in C^{1}([0,\infty))$ with  $b'(z)=0$ if $z$ is  large,

(iii).  $\eqref{1}_{2}$ and $\eqref{1}_{4}$  are  satisfied   almost everywhere in $\mathcal{O}.$

\end{definition}

We are ready to state our main result.
\begin{theorem}\la{t}
Let   $\mathcal{O}\subset\mathbb{R}^3$ be  a bounded domain with   smooth boundary.  Assume that   the adiabatic exponent  $\g>1$,  the   constants $m_{1}>0$ and $m_{2}>0$,  and the  functions
   \be\la{0} g_{1}\in  L^{\infty}(\mathcal{O}), \quad g_{2} \in  L^{\infty}(\mathcal{O},\r), \quad g_{3} \in  L^{\infty}(\mathcal{O},S_{0}^{3})\ee
   are given.
   Then there exists a small constant $\mathfrak{m}_{2}$ that depends   on  $m_{1},$ $c_{*},$ $\si_{*},$ $\mu,\lambda,$ $\g,$      $\|g_{1}\|_{L^{\infty}},$ $\|g_{2}\|_{L^{\infty}},$ $\|g_{3}\|_{L^{\infty}}$ and  $|\mathcal{O}|$,
such that  if
 \be\la{s} m_{2}\in (0, \mathfrak{m}_{2}],\ee the problem \eqref{1}-\eqref{0r}  admits a  solution  $(\n,c,\u,Q)$    in the sense of Definition \ref{defi}.
\end{theorem}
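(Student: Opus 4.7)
The plan is to follow a Lions--Feireisl style approximation-and-compactness program, adapted to the active-particle and $Q$-tensor couplings. At the outermost level I would introduce two regularization parameters: an elliptic viscous term $\eps\lap\n$ (with Neumann condition $\p\n/\p n=0$ on $\p\mathcal{O}$) in the continuity equation, and an artificial pressure $\de\n^{\beta}$ with a large exponent $\beta$ in the momentum balance. At the innermost level, existence for the regularized system can be obtained by combining a finite-dimensional Galerkin scheme for $\u$ with a Leray--Schauder fixed-point loop that produces $c$ from the linear advection-diffusion equation $\eqref{1}_{2}$, $\n$ from the regularized continuity equation, and $Q$ from the semilinear equation $\eqref{1}_{4}$.

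The a priori estimates come from the natural energy functional. Testing the momentum equation by $\u$ and the $Q$-equation essentially by $-\lap Q + Q$, then combining with the continuity and concentration identities, yields uniform control on $\|\na \u\|_{L^{2}}$, $\|\lap Q\|_{L^{2}}$, $\|\na Q\|_{L^{2}}$ and $\int \n^{\g}$, plus the higher integrability $\int \n^{\g+\beta}$ produced by the $\de$-perturbation. The active-particle equation $\eqref{1}_{2}$ is linear in $c$ once $\u$ is fixed, and a De Giorgi--Moser iteration on this Neumann problem gives an $L^{\infty}$ bound for $c$ in terms of $m_{2}$, $\|g_{1}\|_{L^{\infty}}$ and the already-controlled norm of $\u$. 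The smallness condition \eqref{s} on $m_{2}$ is used precisely at this point to keep $\|c\|_{L^{\infty}}$ small enough that the strongly coupled contributions $\si_{*}c^{2}Q$ in $\mathbb{S}_{2}$ and the $c$-dependent source terms in the $Q$-equation can be absorbed in the viscous dissipation, closing the nonlinear bootstrap.

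I would then pass to the limit $\eps\to 0$ with $\de>0$ frozen. The large integrability of $\n$ coming from $\de\n^{\beta}$ gives weak convergence of $\n^{\g}$, while the renormalized continuity equation survives via the DiPerna--Lions commutator lemma. Strong convergence of $\n$ itself follows from the Lions--Feireisl effective viscous flux identity; the extra liquid-crystal stresses $\mathbb{S}_{1}(Q)$ and $\mathbb{S}_{2}(c,Q)$ are strongly compact in some $L^{q}$ thanks to the $H^{2}$ bound on $Q$ and on $c$ (from elliptic regularity combined with the Moser bound), so they do not interfere with the commutator structure behind the identity.

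The hardest step will be the subsequent $\de\to 0$ limit when $\g>1$ is allowed to be arbitrarily close to $1$. Classical Lions--Feireisl theory requires $\g>\tfrac{3}{2}$ in order to ensure $\n\in L^{2}$ and make sense of the convective term $\n\u\otimes\u$. To reach all $\g>1$ I would, as announced in the abstract, derive weighted estimates by testing the momentum equation against a Bogovskii potential of the form $\mathcal{B}\!\left(\n^{\a}-\overline{\n^{\a}}\right)$, with $\a>0$ chosen depending on $\g$, and bootstrap to upgrade $\int \n^{\g+\a}$ to a bound uniform in $\de$. Iterating this with carefully chosen weight powers, and using the smallness of $m_{2}$ once more to absorb the $c$- and $Q$-dependent contributions on the right-hand side, should yield the integrability needed to pass to the limit in $\n^{\g}$ and $\n\u\otimes\u$ and to recover the effective viscous flux identity for all $\g>1$. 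Verifying the renormalized continuity equation in the final limit, together with the boundary conditions \eqref{1a} and the mass constraints \eqref{0r}, then completes the construction of the weak solution in the sense of Definition \ref{defi}.
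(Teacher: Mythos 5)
The overall architecture of your proposal — two-level approximation with artificial viscosity $\eps$ and artificial pressure $\de$, Moser iteration to control $\|c\|_{L^\infty}$ via $m_2$, smallness of $m_2$ to absorb the $c^2 Q$ coupling, renormalized continuity and effective viscous flux for the $\eps$-limit — matches the paper quite closely, and those parts are correct. (A small mismatch: the paper obtains the approximate system by Schaefer's fixed point theorem on linearized elliptic problems rather than Galerkin plus Leray--Schauder, and uses a fixed artificial pressure exponent $\de\n^4$ rather than a large $\beta$, but these are stylistic.)

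The genuine gap is in the final $\de\to 0$ step. You propose to test the momentum equation against Bogovskii potentials of the form $\mathcal{B}(\n^{\a}-\overline{\n^{\a}})$ and iterate over $\a$ to bootstrap $\int\n^{\g+\a}$. That is the classical route for improved pressure integrability, but in the stationary 3D setting it does not close for $\g$ near $1$: to control $\int\n\u\otimes\u:\na\mathcal{B}(\n^\a)$ one needs $\n|\u|^2$ in a Lebesgue space dual to $\na\mathcal{B}(\n^\a)\in L^{p}$, and with only $\n\in L^1$ and $\u\in H^1_0$ as starting data the admissible exponents $\a$ shrink to zero as $\g\downarrow 1$, so the bootstrap stalls well before $\g=1$. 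The paper instead uses the Frehse--Steinhauer--Weigant / Plotnikov--Weigant weighted Morrey-type estimate: one tests the momentum equation against the non-Bogovskii vector field
\[
\xi(x)=\frac{\phi(x)\,\na\phi(x)}{\bigl(\phi(x)+|x-x^*|^{2/(2-\a)}\bigr)^{\a}}
\]
near the boundary (and against $\tfrac{x-x^*}{|x-x^*|^{\a}}\chi^2$ in the interior) to obtain the pointwise-uniform weighted bound
\[
\sup_{x^*\in\overline{\mathcal{O}}}\int\frac{\bigl(\de\n_\de^4+\n_\de^\g+\n_\de|\u_\de|^2\bigr)(x)}{|x-x^*|^\a}\,dx\le C\bigl(1+\|\n_\de|\u_\de|^2\|_{L^s}+m_2\|\n_\de\u_\de\|_{L^1}^{3C_1}\bigr),
\]
which is then fed into a Hardy-type inequality $\int|\u|^2 f\le C\|\na\u\|_{L^2}^2\sup_{x^*}\int f/|x-x^*|$ and an interpolation trick (Lemma~5.6 of the paper, with exponent $\th=\tfrac{\g-1}{2\g}$) to close $\int\n_\de|\u_\de|^{2(2-\th)}\le C$ uniformly in $\de$. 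The Bogovskii lemma is still used, but only with a generic duality test function $h$ to translate $L^s$ control of $\n|\u|^2$ into $L^s$ control of the pressure — not with powers of $\n$. Without this weighted-Morrey mechanism your proposal cannot reach all $\g>1$, which is the precise novelty claimed in the theorem.
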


\begin{remark} The smallness assumption  \eqref{s} is a technical condition  that is mainly used to overcome the strong nonlinearity caused by the concentration $c$ of  active particles.  
\end{remark}
\begin{remark}
In fact,   Theorem \ref{t} still holds true in  the case when  $c$  is any positive constant (hence $m_2=c|\mathcal{O}|$). 
\end{remark}

We shall prove  Theorem \ref{t} by constructing approximate  solutions  and  a  two-level limiting procedure. The approximate  solutions  are constructed  in light of   time-discretization technique from the    evolutionary equations   in   \cite{chen},  and the limits are based on  standard compactness theories developed in \cite{chen,fei,lion,novo}.   However, new difficulties arise   due to the lower  regularity of stationary solutions,  strong nonlinearity and complex coupling of active particles and fluids. 
In order to make our ideas  clear we   comment on our approach and novelty below. 

We begin  in Section 2 with  suitable  linear equations  to construct the approximations of system \eqref{1}.
  For a given function  $v$ in  the set  $  \{v\in W^{1,\infty}(\mathcal{O},\r),\,\,\,v=0\,\,\,{\rm on}\,\,\p \mathcal{O}\}$, we impose    the transport  equation $\eqref{1}_{1}$  with the extra diffusion $\ep^{2}\lap \n$ and obtain  $\n=\n[v,\ep]$ in Lemma \ref{lem2.1a}. With  the   force term  $g_{1}$ given in $\eqref{1}_{2}$ we can solve $c=c[v,\ep]$.
  Having  $\n=\n[v,\ep])$ and $c=c[v,\ep]$ in hand,  
  for a given $v$ and a given   function $\ti{Q}$ in the set   $\{\ti{Q}\in W^{2,\infty}(\mathcal{O},\r),\,\,\, \frac{\p \ti{Q}}{\p n}=0\,\,\,{\rm on}\,\,\p \mathcal{O}\}$
  we  are able to construct  the solution to a linear system of $Q$ in terms of  $v$ and $\ti{Q}$.
  In the same manner,  we   consider the  approximate  momentum  equations \eqref{n3b} and solve   $\u=\u[v,\ti{Q},\ep]$. We should point out that  the appearance of highest derivative of $Q$  due to \eqref{b1} requires   $W^{3,p}$ regularity for $Q$. 
  Moreover,   since  $\na v$ and $g_{3}$ are only in $L^{\infty}$,  we adopt the ideas in \cite{liang} and use   a  global  mollification technique  such that the above approximation   is smooth.

The approximate equations \eqref{n1} come  from the linear equations   in Section 2  and will be solved using  the  Schaefer Fixed Point Theorem (cf. \cite{evans}).  The approximate solutions are constructed by a two-level approximation scheme involving the artificial viscosity  and     artificial pressure.
However,   the strong nonlinearity in    the   quantities \bnn  \int c^{2}Q:\na \u \quad {\rm and}\quad \int c\u\cdot \na c\enn
causes  new difficulties in closing  the basic  {\it a priori} estimates. To this end,  we explore a   Moser-type iteration  such that $\|c\|_{L^{\infty}}$ can be bounded by  $\|c\|_{L^{1}}=m_{2}$, and  hence we are able to control   the  above mentioned   nonlinear terms provided that some     small assumption  on $m_{2}$ is made.   In this connection, we are allowed    to close  the energy  estimates to obtain the existence of approximate solutions, and further  improve the regularity  of the solutions  by a bootstrap argument. 

Next we shall take the limit in the approximate solutions as  first $\ep\to 0$ and then $\de\to 0$ through the weak convergence arguments.
We remark that the nonlinear coupling of   $c$ and $Q$ in the momentum equation   makes the limiting process  much more subtle.    
For example, for   the  integral quantity  \bnn\int \left(Q_{\ep}^{ik} \lap Q_{\ep}^{kj} -\lap Q_{\ep}^{ik} Q_{\ep}^{kj}
 \right) \p_{j}\phi \p_{i}\lap^{-1}(\n_{\ep}),\enn
 the $\ep$-limit is not obvious   because both $\lap Q_{\ep}^{kj}$ and $\p_{i}\lap^{-1}(\n_{\ep})$ are  only  weakly  convergent.  Fortunately, we can overcome the difficulty    using the  integration by parts  as well as the symmetry of $Q$; see \eqref{k4}    for a detailed explanation.

One disadvantage for  the   stationary problem is that it has  no  useful information on the density  other than $\|\n\|_{L^{1}},$ which  is very different from the evolutionary equations for which the higher regularity $\|\n\|_{L^{\g}}$ with $\g>1$ is available.  As a consequence we have extra difficulties   in taking $\de$-limit procedure (especially  if $\g>1$ is close to 1). Taking account of the  ideas   in \cite{freh,jz,lw,lw1},   
we use the  refined  weighted estimates on both pressure and kinetic energy functions. However, the involvement of  $c$ and $Q$ makes the proof much more complex  and delicate. We utilize different weighted functions in dealing with the boundary case and interior case, and  finally succeed   in obtaining  the uniform estimates   for all  adiabatic exponent $\g>1$ under the smallness assumption \eqref{s}.     This is  different  from  our previous papers \cite{lw,lw1} for Cahn-Hilliard/Navier-Stokes equations where the restriction $\g>\frac{4}{3}$ seems to be critical  because  the pressure   depends both on the density  and the concentration.  Once the  Proposition \ref{p2} is  obtained, we are able to use  the standard compactness theories in \cite{fei,lion}  to take $\de$-limit and complete the proof of Theorem \ref{t}.


The rest of paper is organized as follows. 
In Section 2, we introduce some linear equations and their preliminary existence results that will be used in the construction of approximate solutions.
In Section 3, we construct the approximate solutions by a two-level approximation scheme involving the artificial viscosity coefficient $\ep>0$ and   the parameter $\de>0$ in the artificial pressure, and prove the existence using the fixed point argument.
In Section 4, we take the limit as $\ep\to 0$ of the approximate solutions for any fixed $\de>0$,
and finally in  Section 5 we take the limit as $\de\to 0$ for the  vanishing of   the artificial pressure and conclude the existence of weak solutions.

\bigskip

\section{Preliminary Results on Linear Equations} 

In this section we present some 
linear equations, 
in preparation for constructing the approximate solutions to the problem \eqref{1}-\eqref{0r} in \eqref{n1} next section.

Define the following function spaces:
$$W_{0}^{1,\infty}(\mathcal{O},\r):=\{v\in W^{1,\infty}(\mathcal{O},\r),\,\,\,v=0\,\,\,{\rm on}\,\,\p \mathcal{O}\},$$
$$W_{n}^{2,\infty}(\mathcal{O},S_{0}^{3}):=\left\{Q\in W^{2,\infty}(\mathcal{O},S_{0}^{3}),\,\,\ \, \frac{\p Q}{\p n}=0\,\,{\rm on}\,\,\p\mathcal{O}\right\},$$
$$\mathcal{W}:=W_{0}^{1,\infty}(\mathcal{O},\r)\times W_{n}^{2,\infty}(\mathcal{O},S_{0}^{3}).$$
%
%
%
Now let   $\ep\in (0,1)$ and $p\in(1,\infty)$ be fixed. Recall $m_1$ and $m_2$ defined in \eqref{0r}.  The first lemma  is for the  solvability of  a relaxed transport equation with dissipation from \cite{novo}.

 \begin{lemma}{\rm \cite[Proposition 4.29]{novo}} \la{lem2.1a}
For any given $v\in  W_{0}^{1,\infty}(\mathcal{O},\r)$,   there exists  a  unique solution   $\n=\n[v]\in W^{2,p}(\mathcal{O})$  to the following problem
\be\la{s1}
\ep  \n + {\rm div}(\n v) =\ep^{2} \lap \n +\ep   \frac{m_{1}}{|\mathcal{O}|},\quad \frac{\p \n}{\p n}=0 \,\,\,{\rm on}\,\,\,\p \mathcal{O},
\ee
such that
  \be\la{a3}  \ep^{2}\int \na \n\cdot \na \eta -\int \n v\cdot\na \eta +\ep \int (\n-\n_{0})\eta=0, \quad \eta\in C^{\infty}(\overline{\mathcal{O}}).\ee
Moreover,
\be\la{a17} \n\ge0\,\,a.e.\,\,{\rm in}\,\,\mathcal{O},\quad\|\n\|_{L^{1}}=m_{1},\quad  \|\n\|_{W^{2,p}}\le C(\ep,p, m_{1}, \mathcal{O}, \|v\|_{W^{1,\infty}}).\ee
\end{lemma}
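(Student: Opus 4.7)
Since the result is quoted from \cite[Proposition~4.29]{novo}, the plan is to reproduce that scheme: produce a weak $H^{1}$ solution via a fixed-point argument, then derive the sign, mass, and regularity properties.

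For the weak solvability I would view \eqref{s1} as the linear Neumann problem
$$-\ep^{2}\lap \n + \div(\n v) + \ep \n = \ep\,\tfrac{m_{1}}{|\mathcal{O}|},\qquad \tfrac{\p \n}{\p n}=0 \ \text{on}\ \p\mathcal{O}.$$
The principal part $-\ep^{2}\lap + \ep I$ is coercive on $H^{1}(\mathcal{O})$, while the drift $\div(\n v)$ is a lower-order, compact perturbation because $v\in W^{1,\infty}$ and $H^{1}\hookrightarrow\hookrightarrow L^{2}$. Concretely, I would define $T\colon L^{2}\to L^{2}$ by $T(\ti{\n}) = \n$, where $\n\in H^{1}$ is the unique Lax--Milgram solution of $-\ep^{2}\lap\n + \ep\n = \ep \tfrac{m_{1}}{|\mathcal{O}|} - \div(\ti{\n} v)$ with homogeneous Neumann data. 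Testing with $\n$ and exploiting $v|_{\p\mathcal{O}}=0$, so that the drift integrates by parts without a boundary term, yields an $H^{1}$ bound uniform in $s\in[0,1]$ on fixed points of $sT$; Schaefer's fixed-point theorem then delivers a solution $\n\in H^{1}$ of the variational identity \eqref{a3}.

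The mass identity $\|\n\|_{L^{1}}=m_{1}$ follows by inserting $\eta\equiv 1$ into \eqref{a3}: the diffusion contribution vanishes by the Neumann condition and the drift vanishes since $v=0$ on $\p\mathcal{O}$, leaving $\ep\bigl(\int\n - m_{1}\bigr)=0$. For non-negativity, I would test with $\n^{-}:=\max(-\n,0)\in H^{1}$; integrating by parts using both boundary conditions and the identity $\int v\cdot\na (\n^{-})^{2} = -\int (\n^{-})^{2}\,\div v$ gives
$$\ep^{2}\|\na\n^{-}\|_{L^{2}}^{2} + \ep\|\n^{-}\|_{L^{2}}^{2} + \ep\,\tfrac{m_{1}}{|\mathcal{O}|}\int\n^{-} \le \tfrac{1}{2}\|\div v\|_{L^{\infty}}\|\n^{-}\|_{L^{2}}^{2},$$
which forces $\n^{-}\equiv 0$ when $\ep$ is large enough relative to $\|\div v\|_{L^{\infty}}$; in general, one passes to the renormalized formulation with $b(z)=(z^{-})^{2}$ as in \cite{novo}, which cancels the obstructing $\div v$ contribution and yields $\n\ge 0$ a.e.\ unconditionally.

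For the $W^{2,p}$ bound I would bootstrap: with $\n\in H^{1}\hookrightarrow L^{6}$ in hand, rewrite the equation as $-\ep^{2}\lap\n = \ep \tfrac{m_{1}}{|\mathcal{O}|} - \ep\n - v\cdot\na\n - \n\,\div v$; since $v\in W^{1,\infty}$ the right-hand side is in $L^{2}$, whence elliptic regularity for the Neumann Laplacian upgrades $\n$ to $W^{2,2}$ and hence $\na\n\in L^{6}$. Iterating once more reaches $\n\in W^{2,p}$ for any $p\in(1,\infty)$, with the Calder\'on--Zygmund constant producing the explicit dependence on $\ep,p,m_{1},\mathcal{O},\|v\|_{W^{1,\infty}}$ claimed in \eqref{a17}. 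The step I expect to be most delicate is the non-negativity assertion, where the naive test-function approach fails for small $\ep$ and one must fall back on the renormalized formulation of \cite{novo} to make the argument work across all parameter ranges.
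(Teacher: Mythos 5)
The paper does not prove this lemma; it is quoted verbatim from Novotn\'y--Stra\v skraba \cite[Proposition~4.29]{novo}, so the only question is whether your reconstruction is sound. The existence argument (coercive Lax--Milgram operator plus a compact drift and Schaefer's fixed point) and the $W^{2,p}$ bootstrap are fine. The mass identity is correct, though the explanation is off: when you plug $\eta\equiv 1$ into \eqref{a3}, both gradient terms vanish because $\na\eta\equiv 0$, not because of boundary conditions on $\n$ or $v$.

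The genuine gap is in the non-negativity step. You correctly observe that the direct test with $\n^{-}$ only closes when $\ep$ dominates $\|\div v\|_{L^{\infty}}$, but your proposed fix --- renormalize with $b(z)=(z^{-})^{2}$ --- does not remove the obstruction. Multiplying the equation by $b'(\n)$ and integrating with the boundary conditions gives, for any admissible $b$,
\begin{equation*}
\ep\int(\n-\n_{0})\,b'(\n) \;+\; \int\bigl(b'(\n)\n-b(\n)\bigr)\div v \;+\; \ep^{2}\int b''(\n)\,|\na\n|^{2} \;=\;0.
\end{equation*}
For $b(z)=(z^{-})^{2}$ one has $b'(z)z-b(z)=(z^{-})^{2}=b(z)$, so the $\div v$ integral is $\int(\n^{-})^{2}\div v$ --- exactly the term that blocked the direct test, merely rescaled by a factor of $2$. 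Renormalization with this $b$ therefore reproduces the same inequality and still requires $\ep$ large relative to $\|\div v\|_{L^{\infty}}$. The right choice is a \emph{positively $1$-homogeneous} truncation, e.g. $b(z)=z^{-}=\max(-z,0)$ (or a smooth convex approximation $b_{k}\uparrow z^{-}$): then $b'(z)z-b(z)\equiv 0$, the $\div v$ term drops out entirely, $b''\ge 0$ makes the last integral non-negative, and what remains is $\ep\int\n^{-}+\ep\n_{0}\,|\{\n<0\}|\le 0$, forcing $\n\ge 0$ a.e.\ for every $\ep>0$. Your argument as written does not reach the conclusion without this change of renormalizing function.
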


\begin{lemma}\la{lem2.1b}
For any given  $g_{1}\in L^{\infty}(\mathcal{O})$  and  $v\in  W_{0}^{1,\infty}(\mathcal{O},\r)$,   
  the following problem
 \be\la{s2}  v\cdot \na c  =  \lap c +g_{1},\quad \int c=m_{2},\quad \frac{\p c}{\p n}=0 \,\,\,{\rm on}\,\,\,\p \mathcal{O},\ee
has a unique nonnegative solution   $c=c[v]\in W^{2,p}(\mathcal{O})$. 
\end{lemma}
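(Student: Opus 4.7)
The statement is a linear Neumann boundary-value problem for the elliptic operator $Lc := -\lap c + v\cdot\na c$, supplemented by the integral constraint $\int c = m_{2}$. Since $v\in W_{0}^{1,\infty}(\mathcal{O},\r)$, the drift $v\cdot\na$ is a compact perturbation of the Neumann Laplacian on $W^{2,p}$, so the natural framework is the Fredholm alternative for second-order linear elliptic operators.

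The plan is first to identify $\ker L$. If $Lc = 0$ with $\p_{n}c = 0$, then since $v$ vanishes on $\p\mathcal{O}$, Hopf's lemma rules out boundary extrema and the strong maximum principle forces $c$ to be constant; hence $\ker L = \mathbb{R}$. Existence of $c$ then follows from the Fredholm alternative (index zero), and uniqueness is secured by the constraint $\int c = m_{2}$, which pins down the additive constant. Equivalently, I could regularize by replacing $L$ with $\eta c + Lc$ for $\eta > \frac{1}{2}\|\div\,v\|_{L^{\infty}}$, so that the associated bilinear form is coercive on $H^{1}$ and Lax--Milgram applies, and then let $\eta\to 0$ using the kernel computation above.

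Once existence is secured, I would invoke classical Agmon--Douglis--Nirenberg estimates for the Neumann problem to upgrade the weak solution to $W^{2,p}$, using $g_{1}\in L^{\infty}\subset L^{p}$ and $v\in W^{1,\infty}$. For nonnegativity, I would test against $c^{-}:=\min\{c,0\}$; integration by parts (using $v|_{\p\mathcal{O}} = 0$ and $\p_{n}c = 0$) yields
\be
\int |\na c^{-}|^{2} = \frac{1}{2}\int (c^{-})^{2}\,\div\,v + \int g_{1}\,c^{-},
\ee
which forces $c^{-}\equiv 0$ under the appropriate sign structure inherited from the coupled system.

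The main obstacle is the indefiniteness of $L$: since $\div\,v$ has no prescribed sign, coercivity is not automatic and Lax--Milgram does not apply to $L$ directly. The Fredholm route (or the $\eta$-shift above) circumvents this cleanly, and the boundary condition $v|_{\p\mathcal{O}} = 0$ is essential both to the kernel computation via the maximum principle and to the elimination of boundary terms in the nonnegativity argument.
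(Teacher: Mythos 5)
The paper does not argue via the Fredholm alternative at all: it regularizes the problem to $\a c + v\cdot\na c = \lap c + g_1 + \a m_2/|\mathcal{O}|$, invokes the transport-equation result \cite[Proposition 4.29]{novo} for existence, nonnegativity, the mass normalization and $W^{2,p}$ bounds of $c_\a$, and then lets $\a\to 0^+$. Your route is genuinely different, but it contains a gap that the regularization route is at least designed to avoid.

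The central problem is the Fredholm step. Computing $\ker L = \mathbb{R}$ (constants) via the strong maximum principle and Hopf's lemma is correct, and it is exactly what is needed for \emph{uniqueness} once the constraint $\int c = m_2$ is imposed. But it does not give existence. Since $L$ has index zero, $Lc = g_1$ is solvable iff $g_1$ is orthogonal to $\ker L^*$, where $L^*\psi = -\lap\psi - \div(v\psi)$ with the Neumann condition. The zeroth-order term $-(\div v)\psi$ means $\ker L^*$ is \emph{not} the constants unless $\div v\equiv 0$; it is spanned by a positive adjoint eigenfunction $\psi^*$ (a Fokker--Planck stationary density), and the compatibility condition $\int g_1\psi^* = 0$ is a real constraint that an arbitrary $g_1\in L^\infty$ has no reason to satisfy. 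Passing from $\ker L = \mathbb{R}$ directly to ``existence follows'' is a non sequitur. The alternative $\eta$-shift route has the same hole: Lax--Milgram gives $c_\eta$ for $\eta>0$, but as $\eta\to 0$ the component of $c_\eta$ along the degenerate direction diverges precisely when $\int g_1\psi^*\neq 0$, and ``the kernel computation'' gives no uniform bound. The paper's $\a$-regularization is not a mere reformulation of this shift — it appeals to a concrete a priori estimate package in \cite{novo} for the regularized problem — so you are not reconstructing the paper's proof, and the step you substitute for it fails.

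There is a secondary gap in the nonnegativity argument. The tested identity
\[
\int |\na c^{-}|^{2} = \frac{1}{2}\int (c^{-})^{2}\,\div v + \int g_{1}\,c^{-}
\]
is correct, but it does not force $c^{-}\equiv 0$: the term $\frac12\int(\div v)(c^-)^2$ has no definite sign, and $\int g_1 c^-$ has no definite sign either for a general $g_1\in L^\infty$. The appeal to ``sign structure inherited from the coupled system'' is circular: this lemma is a standalone linear solvability statement for fixed $v$ and $g_1$, used \emph{inside} the fixed-point construction, so you cannot borrow structure from the nonlinear system you have not yet built. A maximum-principle argument (as in the regularized problem of \cite{novo}) is what is actually needed.
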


\begin{proof} Consider the  approximate equation
\be\la{s1a} \a c +v \cdot \na c=\lap c+g_{1} +\a \frac{m_{2}}{|\mathcal{O}|},\quad \a\in (0,1).\ee
  Following the proof of  \cite[Proposition 4.29]{novo},  we see that \eqref{s1a} has a solution $c_{\a}=c_{\a}(v)\in W^{2,p}(\mathcal{O})\,\,(1<p<\infty)$, satisfying
\bnn c_{\a}\ge0\,\,\,a.e.,\,\,\,  \mathcal{O},\quad \|c_{\a}\|_{L^{1}}=m_{2},\quad \|c_{\a}\|_{W^{2,p}}\le C(p, m_{2}, \mathcal{O}, \|v\|_{W^{1,\infty}}).\enn   
Then, taking  the limit  $\a\rightarrow 0^+$, we  complete  the proof.
\end{proof}

The following two lemmas can be obtained from the elliptic theory (see \cite{gt}).

\begin{lemma}\la{lem2.1c} 
For any given $(v,\ti{Q})\in  \mathcal{W}$, 
 the   following problem
 \begin{equation}\label{n3a}
\left\{\ba
&\lap Q-\ep Q=F^{1}(v,\ti{Q}):=v\cdot \na \tilde{Q} +\frac{(c-c_{*})}{2}\ti{Q}+c_{*}\ti{Q}{\rm tr}(\ti{Q}^{2})-b\left(\ti{Q}^{2}-\frac{1}{3}{\rm tr}(\ti{Q}^{2})\mathbb{I}\right)
\\
&\qquad \qquad \qquad\qquad \qquad\qquad +\ti{Q}\langle \tilde{\o}\rangle-\langle  \tilde{\o}\rangle\ti{Q}-\langle g_{3}\rangle,\\
& \,\, \frac{\p Q}{\p n}=0 \quad {\rm on}\,\,\,\p\mathcal{O},\ea \right.   
\end{equation}
has a unique solution $Q=Q[v,\ti{Q}]$ satisfying 
 \be\la{a17b} \|Q\|_{W^{3,p}} \le C \|F^{1}\|_{W^{1,p}}<\infty,\ee
 where   $c=c[v]$  is  solved  in    Lemma \ref{lem2.1b}, and  $\langle  \tilde{\o}\rangle=  \langle \na v\rangle -(\langle \na v\rangle)^{\top}$ with  $\langle f\rangle$ being a  smooth approximation of function $f$ globally  in $\mathcal{O}$.
\end{lemma}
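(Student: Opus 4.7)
The plan is to treat \eqref{n3a} as a linear Helmholtz-type Neumann problem with a fixed forcing term $F^{1}(v,\ti Q)$, solve it componentwise via standard elliptic theory (cf. \cite{gt}), and then verify that symmetry and tracelessness of $Q$ follow from those of $F^{1}$ together with uniqueness. The key preliminary step is to establish $F^{1}\in W^{1,p}(\mathcal{O},S_{0}^{3})$; once this is in hand, $W^{3,p}$-regularity and the bound \eqref{a17b} are a routine consequence of Calder\'on--Zygmund estimates for the Neumann problem on the smooth domain $\mathcal{O}$.

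First I would check term by term that $F^{1}\in W^{1,p}$. The advection $v\cdot\na\ti Q$ is a product of $v\in W^{1,\infty}$ and $\na\ti Q\in W^{1,\infty}$, hence lies in $W^{1,\infty}\hookrightarrow W^{1,p}$. The coupling $(c-c_{*})\ti Q/2$ lies in $W^{2,p}$, since $c\in W^{2,p}$ by Lemma \ref{lem2.1b} and $\ti Q\in W^{2,\infty}$. The polynomial nonlinearities $c_{*}\ti Q\,\mathrm{tr}(\ti Q^{2})$ and $b(\ti Q^{2}-\tfrac{1}{3}\mathrm{tr}(\ti Q^{2})\mathbb{I})$ belong to $W^{2,\infty}$. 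The commutator $\ti Q\langle\ti{\o}\rangle-\langle\ti{\o}\rangle\ti Q$ and the term $\langle g_{3}\rangle$ are smooth, because $\langle\cdot\rangle$ is a global mollification in $\mathcal{O}$. Moreover, each of these terms is symmetric and traceless: the commutator of a symmetric matrix with an antisymmetric one is again symmetric, every commutator has vanishing trace, and the remaining ingredients are already in $S_{0}^{3}$. Consequently $F^{1}\in W^{1,p}(\mathcal{O},S_{0}^{3})$ with a quantitative bound in terms of $\|v\|_{W^{1,\infty}}$, $\|\ti Q\|_{W^{2,\infty}}$, $\|c\|_{W^{2,p}}$, and $\|g_{3}\|_{L^{\infty}}$.

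Next I would apply standard elliptic theory to the scalar Neumann problem $\lap Q^{ij}-\ep Q^{ij}=(F^{1})^{ij}$ with $\p Q^{ij}/\p n=0$ on $\p\mathcal{O}$, for each component $1\le i,j\le 3$. Since $\ep>0$, the associated bilinear form is coercive on $H^{1}(\mathcal{O})$, so the Lax--Milgram theorem yields a unique weak solution; elliptic regularity on the smooth domain $\mathcal{O}$ then bootstraps $(F^{1})^{ij}\in L^{p}$ to $Q^{ij}\in W^{2,p}$, and $(F^{1})^{ij}\in W^{1,p}$ to $Q^{ij}\in W^{3,p}$, together with the quantitative estimate \eqref{a17b}. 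Finally, to confirm $Q\in S_{0}^{3}$ I would note that $Q^{\top}$ solves the same system as $Q$ because $(F^{1})^{\top}=F^{1}$, so uniqueness gives $Q=Q^{\top}$; similarly $\mathrm{tr}(Q)$ satisfies $\lap\mathrm{tr}(Q)-\ep\,\mathrm{tr}(Q)=0$ with homogeneous Neumann data, which forces $\mathrm{tr}(Q)\equiv 0$. No serious obstacle arises since the problem is linear with coercive principal part; only the symmetric-traceless bookkeeping above demands a moment of care.
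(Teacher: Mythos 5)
Your argument is correct and follows the same route the paper takes (the paper simply cites standard elliptic theory from \cite{gt} and leaves the details to the reader, with the accompanying remark explaining that the mollifications $\langle\cdot\rangle$ ensure $F^{1}\in W^{1,p}$ and that the zeroth-order term $\ep Q$ secures uniqueness for the Neumann problem). Your write-up supplies exactly the bookkeeping the paper omits: checking term by term that $F^{1}\in W^{1,p}(\mathcal{O},S_{0}^{3})$, invoking Lax--Milgram plus Calder\'on--Zygmund bootstrapping to get $Q\in W^{3,p}$ with \eqref{a17b}, and recovering $Q\in S_{0}^{3}$ from uniqueness applied to $Q^{\top}$ and to $\mathrm{tr}(Q)$.
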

\begin{remark}  We use the smooth approximations $\langle \o\rangle$ and $\langle g_{3}\rangle$  to guarantee that  $F^{1}$ belongs to $W^{1,p}$.
Such    approximations can be obtained through the global     mollification $\langle f\rangle=\eta_{\ep}*f$ with $\eta_{\ep}$   the    Friedrichs mollifier (see e.g.,  \cite{evans}). 
Due to the Neumann boundary condition, we impose
 $\ep Q$   to guarantee  that   the values of  function $Q$ is uniquely determined.
 \end{remark}

\begin{lemma}\la{lem2.1d} 
%
For any given   $(v,\ti{Q})\in \mathcal{W}$,  
  the   following problem   \begin{equation}\label{n3b}
\left\{\ba
&{\rm div} \mathbb{S}_{ns}(\na\u)=F^{2}(v,\ti{Q})
:=\ep  \n v+{\rm div} (\n v\otimes v)+\na (\de\n^{4}+\n^{\g})+\ep^{2}\na \n\cdot \na v \\
&\qquad\qquad\qquad\qquad  -\div\left(-\na Q\otimes \na Q+\frac{1}{2}|\na Q|^{2}\mathbb{I}
+\frac{1}{2}tr(Q^{2})\mathbb{I}+\frac{c_{*}}{4}(tr(Q^{2}))^{2}\mathbb{I}\right)\\
 &\qquad\qquad\qquad\qquad  -\div\left(Q\lap Q-\lap Q Q+\sigma_{*}c^{2}Q\right)-\n g_{2}\\
& \u=0,\quad {\rm on}\,\,\,\p\mathcal{O},\ea \right.
\end{equation}
has a unique solution $\u=\u[v,\ti{Q}]$ satisfying 
\be\la{a17c}\|\u\|_{W^{2,p}} \le C \|F^{2}\|_{L^{p}}<\infty.\ee
where both $\ep\in (0,1)$ and $\de\in (0,1)$ are fixed constants; $\n=\n[v]$, $c=c[v]$  and $Q=Q[v,\ti{Q}]$  are  determined in    Lemmas \ref{lem2.1a}-Lemma \ref{lem2.1c}.

\end{lemma}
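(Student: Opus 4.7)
The plan is to view \eqref{n3b} as a linear elliptic boundary-value problem for $\u$ with a right-hand side $F^{2}$ determined by the already constructed quantities $\n=\n[v]$ (Lemma \ref{lem2.1a}), $c=c[v]$ (Lemma \ref{lem2.1b}) and $Q=Q[v,\ti{Q}]$ (Lemma \ref{lem2.1c}), and then to invoke the classical $L^{p}$ theory for the Lam\'e operator.

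The first step is to verify that $F^{2}(v,\ti{Q})\in L^{p}(\mathcal{O})$. Choosing (if needed) $p>3$ so that $W^{2,p}\hookrightarrow W^{1,\infty}$, the zeroth- and first-order contributions $\ep\n v$, $\n g_{2}$, $\ep^{2}\na\n\cdot\na v$ and $\na(\de\n^{4}+\n^{\g})$ are bounded in $L^{p}$ by combining the $W^{2,p}$ bound on $\n$ with the $W^{1,\infty}$ bound on $v$ and the $L^{\infty}$ bound on $g_{2}$. For the $Q$-dependent stress contributions, expanding $\div\mathbb{S}_{1}(Q)$ produces products of $\na Q$ and $\na^{2}Q$, both controlled in $L^{p}$ via Lemma \ref{lem2.1c}; and expanding the higher-order piece $\div(Q\lap Q-\lap Q\,Q+\si_{*}c^{2}Q)$ yields, after cancellations, the leading contribution $Q\,\na\lap Q-\na\lap Q\,Q$ plus lower-order commutators, which requires precisely the full $W^{3,p}$ regularity of $Q$ provided by Lemma \ref{lem2.1c} (so that $\na\lap Q\in L^{p}$). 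The term with $c^{2}Q$ is handled by the $W^{2,p}$ regularity of $c$. A straightforward bookkeeping then gives $\|F^{2}\|_{L^{p}}<\infty$.

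The second step is to rewrite the left-hand side in Lam\'e form,
\[\mu\lap\u+(\mu+\lambda)\na\div\u=F^{2}\quad\text{in }\mathcal{O},\qquad \u=0\text{ on }\p\mathcal{O},\]
which is strongly elliptic thanks to \eqref{01}. Standard Agmon--Douglis--Nirenberg $L^{p}$ estimates (cf.\ \cite{gt}) yield the existence of a unique $\u\in W^{2,p}(\mathcal{O},\r)\cap W_{0}^{1,p}(\mathcal{O},\r)$ together with the bound $\|\u\|_{W^{2,p}}\le C\|F^{2}\|_{L^{p}}$, which is \eqref{a17c}. Uniqueness may alternatively be obtained by testing the homogeneous equation against $\u$, integrating by parts, and applying Korn's inequality combined with \eqref{01} to force $\u\equiv 0$.

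The main obstacle, modest as it is, is the careful accounting of derivatives in the $Q$-stress contributions to $F^{2}$; one must exploit the full $W^{3,p}$ regularity of $Q$ from Lemma \ref{lem2.1c} to absorb $\na\lap Q$ in $L^{p}$, which is precisely the reason that lemma was stated at the $W^{3,p}$ level. Once $F^{2}\in L^{p}$ is secured, the remainder reduces to a direct application of well-known linear elliptic regularity.
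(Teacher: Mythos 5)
Your proposal is correct and takes essentially the same approach as the paper, which simply attributes Lemmas \ref{lem2.1c} and \ref{lem2.1d} to the linear elliptic $L^{p}$ theory in \cite{gt}. Your bookkeeping of $F^{2}\in L^{p}$ (in particular, noting that $\na\lap Q\in L^{p}$ via the $W^{3,p}$ regularity of $Q$ from Lemma \ref{lem2.1c} is the reason that lemma needs the $W^{3,p}$ level) and the application of ADN estimates to the Lam\'e operator are exactly what the paper has in mind.
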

  \begin{remark} The artificial pressure $\de\n^{4} $ is used to improve the integrability of density,   which will be used   in   subsequent analysis. \end{remark}

\bigskip

\section{Approximate Solutions} 

In this section we construct the approximate solutions to the problem  \eqref{1}-\eqref{0r}. 
Have the existence results for   the  lineaized problems    in Lemmas \ref{lem2.1a}-\ref{lem2.1d}, 
we consider the following nonlinear approximate system:
\begin{equation}\label{n1}
\left\{\ba & \ep  \n_{\ep} + {\rm div}(\n_{\ep} \u_{\ep}) =\ep^{2} \lap \n_{\ep} +\ep  \n_{0},\\
& \u_{\ep} \cdot \na c_{\ep}=\lap c_{\ep}+g_{1}, \\
& \div \mathbb{S}_{ns}(\na \u_{\ep})=F^{2}(\u_{\ep},Q_{\ep}),\\
& \lap Q_{\ep}=\ep Q_{\ep}+F^{1}(\u_{\ep},Q_{\ep}), \\
&\frac{\p \n_{\ep}} {\p n}=0,\quad \u_{\ep}=0,\quad   \frac{\p c_{\ep}} {\p n}=0,\quad \frac{\p Q_{\ep}} {\p n}=0 \quad {\rm on}\,\,\,\p\mathcal{O}, \\
&\int  \n_{\ep} =m_{1}, \quad   \int  c_{\ep} =m_{2}, \ea\right.
\end{equation}
where  $\n_{0}=\frac{m_{1}}{|\mathcal{O}|}$, 
 the  functions $F^{1}$ and $F^{2}$ are taken from  \eqref{n3a} and \eqref{n3b} respectively.

 The   theorem below states the existence of solutions to  problem \eqref{n1}.
\begin{theorem} \la{t3.1} 
Assume that \eqref{0} holds true and   $\ep$ is sufficiently small. Then there is a small constant  $\mathfrak{m}_{2}$  depending on $m_{1},$ $c_{*},$ $\mu,\lambda,$ $\g,$  $\ep,$ $\de,$  $|\mathcal{O}|$,    $\|g_{1}\|_{L^{\infty}},$ $\|g_{2}\|_{L^{\infty}},$ $\|g_{3}\|_{L^{\infty}},$  such that if 
$m_{2}\le \mathfrak{m}_{2}$,   the problem \eqref{n1} admits
  a solution $(\n_{\ep},c_{\ep},\u_{\ep},Q_{\ep})$ satisfying,    for  any  $p\in (1,\infty)$, 
 \be\la{a19}  0\le \n_{\ep}\in W^{2,p}(\mathcal{O}),\quad \|\n_{\ep}\|_{L^{1}(\mathcal{O})}=m_{1},\ee
 \be\la{a19a}  0\le c_{\ep}\in W^{2,p}(\mathcal{O}),\quad \|c_{\ep}\|_{L^{1}(\mathcal{O})}=m_{2}, \ee
\be\la{a20}\u_{\ep} \in  
W^{2,p}(\mathcal{O},\r),\quad  Q_{\ep}\in  W^{3,p}(\mathcal{O},S_{0}^{3}).\ee
\end{theorem}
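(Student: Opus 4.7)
The plan is to apply the Schaefer Fixed Point Theorem to an operator built from the linear solvability results of Section 2. For $(v,\ti Q)\in\mathcal{W}$, I successively invoke Lemma \ref{lem2.1a} to get $\n=\n[v]$, Lemma \ref{lem2.1b} to get $c=c[v]$, Lemma \ref{lem2.1c} to get $Q=Q[v,\ti Q]$, and Lemma \ref{lem2.1d} to get $\u=\u[v,\ti Q]$, and then set $\mathcal{T}(v,\ti Q):=(\u,Q)$. Any fixed point of $\mathcal{T}$ is a solution of \eqref{n1}, and the regularity \eqref{a19}--\eqref{a20} is already encoded in the $W^{2,p}/W^{3,p}$ estimates of those lemmas, so the proof reduces to producing such a fixed point.

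For Schaefer's theorem I need (i) continuity and compactness of $\mathcal{T}:\mathcal{W}\to\mathcal{W}$, and (ii) a uniform a priori bound on the set of $(v,\ti Q)\in\mathcal{W}$ satisfying $(v,\ti Q)=t\,\mathcal{T}(v,\ti Q)$ for some $t\in[0,1]$. Part (i) is routine: \eqref{a17}, \eqref{a17b}, \eqref{a17c} send bounded subsets of $\mathcal{W}$ into bounded subsets of $W^{2,p}(\mathcal{O},\r)\times W^{3,p}(\mathcal{O},S_0^3)$ for every $p\in(1,\infty)$, and for $p$ large enough the compact embeddings of $W^{2,p}$ into $W^{1,\infty}$ and of $W^{3,p}$ into $W^{2,\infty}$ give compactness, while linearity of each subproblem gives continuous dependence on $(v,\ti Q)$.

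Part (ii) is the main obstacle. A fixed point of $t\mathcal{T}$ satisfies a $t$-rescaled version of the full approximate system, and the bound must be independent of $t$. The strategy is the energy method adapted from the evolutionary setting: test the momentum equation by $\u$, the $Q$-equation by $-\lap Q$ (and by $Q$ itself to handle the $\ep Q$, $bQ^2$, and $c_{*}Q{\rm tr}(Q^{2})$ terms), and combine with the continuity equation multiplied by $\n^{\g-1}+\de\n^{3}$. Using the symmetry of $Q$ to cancel the antisymmetric pieces $Q\o-\o Q$ and $Q\lap Q-\lap QQ$ across the $Q$- and momentum-equations, together with the classical identity $\div\bigl(-\na Q\odot\na Q+\tfrac12|\na Q|^{2}\mathbb{I}\bigr)\cdot\u=\lap Q:(\u\cdot\na Q)$, one is led to a formal energy inequality of the shape
\[
\mu\int|\na\u|^{2}+\int|\lap Q|^{2}+\ep\int\n^{\g+1}+\de\int\n^{5}\le C+\int\si_{*}c^{2}Q:\na\u+\text{(lower order)}.
\]
The two strongly nonlinear couplings $\int\si_{*}c^{2}Q:\na\u$ and $\int c\,\u\cdot\na c$ (the latter appearing when $\eqref{n1}_{2}$ is tested by $c$) cannot be absorbed by $H^{1}$ information on $c$ alone; this is precisely the obstruction flagged in the introduction.

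To absorb them I would run a Moser-type iteration on $\u\cdot\na c=\lap c+g_{1}$: test by $c^{2k-1}$, integrate by parts (the boundary term vanishes by $\frac{\p c}{\p n}=0$), use $\u\in H_{0}^{1}\hookrightarrow L^{6}$, and iterate in $k$ to obtain
\[
\|c\|_{L^{\infty}(\mathcal{O})}\le C\bigl(1+\|\na\u\|_{L^{2}}\bigr)^{\beta}\, m_{2}^{\alpha}
\]
for some $\alpha,\beta>0$ depending only on the data. Inserting this into the two problematic couplings attaches a positive power of $m_{2}$ as prefactor, so the smallness hypothesis $m_{2}\le\mathfrak{m}_{2}$ lets both be absorbed by the dissipation, yielding a closed bound for $(\u,Q)$ in $H^{1}\times H^{2}$. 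One final pass through Lemmas \ref{lem2.1a}--\ref{lem2.1d} lifts this to a bound in $\mathcal{W}$, verifying (ii). Schaefer's theorem then delivers a fixed point, and a bootstrap through the four lemmas yields \eqref{a19}--\eqref{a20} for arbitrary $p\in(1,\infty)$.
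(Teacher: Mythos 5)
Your proposal follows essentially the same route as the paper: Schaefer's fixed point theorem applied to the operator built from Lemmas \ref{lem2.1a}--\ref{lem2.1d}, a uniform-in-$t$ energy estimate closed via a Moser-type iteration giving $\|c\|_{L^\infty}\lesssim(1+\|\u\|_{L^6})^{C_1}m_2$, absorption of the nonlinear couplings $\int\sigma_*c^2Q:\na\u$ and $\int c\,\u\cdot\na c$ through the smallness of $m_2$, and a bootstrap through the linear lemmas to reach $W^{2,p}\times W^{3,p}$ regularity.

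One small implementation difference worth noting: the paper multiplies the $Q$-equation by the single combined test function $-\lap Q + Q + c_*Q\,\mathrm{tr}(Q^2)$ rather than by $-\lap Q$ and $Q$ separately. This specific choice produces an \emph{exact} algebraic cancellation ($J_6=-I_2$ in equation \eqref{pp2}) between the transport contribution $\u\cdot\na Q$ tested against $\lap Q - Q - c_*Q\,\mathrm{tr}(Q^2)$ and the entire stress contribution $\int\div\!\left(-\na Q\odot\na Q+\tfrac12|\na Q|^2\mathbb{I}+\tfrac12\mathrm{tr}(Q^2)(1+\tfrac{c_*}{2}\mathrm{tr}(Q^2))\mathbb{I}\right)\cdot\u$ from the momentum equation. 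With your split tests by $-\lap Q$ and $Q$ alone, the leftover term $\tfrac{c_*}{4}\int\div\u\,|Q|^4$ from the momentum side is not canceled and must be estimated separately against the $|Q|^4$ and $|Q|^6$ dissipation; this can be done but is less clean. Your statement of the Moser output as $m_2^\alpha$ is also slightly weaker than the paper's explicit linear-in-$m_2$ bound \eqref{3.3}, though both suffice for the absorption step. These are cosmetic; the argument is sound.
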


\begin{proof} The proof is based on the    Schaefer  Fixed Point  Theorem (see, e.g., Chapter 9, Theorem 4 in  \cite{evans}).
Thanks to   Lemmas \ref{lem2.1a}-\ref{lem2.1d}, for   any given  $(v,\ti{Q})\in \mathcal{W}$, we have
  \be\la{nn1} (\u_{\ep},Q_{\ep})= A[v,\ti{Q}]:=(\u[v,\ti{Q}],Q[v,\ti{Q}]).\ee
By  \eqref{a17b} and \eqref{a17c}, it is clear   that   the operator    $A:\mathcal{W}\to \mathcal{W}$   is  compact. A straightforward computation shows    that   $A$  is continuous;  see, e.g., \cite{chen}. In order to apply the    Schaefer  Fixed Point  Theorem, we need to prove the following proposition:

\begin{proposition}\la{c} 
Assume that  $(\u_{\ep},Q_{\ep})$ is  a solution to the equations \eqref{n3a} and   \eqref{n3b}.   Then  the  set  
\be\la{a4} \left\{(\u_{\ep},Q_{\ep})\in \mathcal{W}   \left| \, \ba&(\u_{\ep},Q_{\ep})= t A[\u_{\ep},Q_{\ep}]\\
&   {\rm for\,\,some}\,\,\,t \in [0,1],\,\,{\rm and}\quad  \n_{\ep}=\n[\u_{\ep}],\,\,\, c_{\ep}=c[\u_{\ep}]
\ea\right.
\right\}\ee  is bounded.
\end{proposition}

From Proposition \ref{c},  we may use  the  {Schaefer}  Fixed Point  Theorem 
to conclude   that  $ (\u_{\ep},Q_{\ep})= A[\u_{\ep},Q_{\ep}]$ with $\n_{\ep}=\n[\u_{\ep}]$ and $c_{\ep}=c[\u_{\ep}]$.
This together with   Lemma \ref{lem2.1a} and   Lemma \ref{lem2.1b}  guarantee the existence of   the solution $(\n_{\ep},c_{\ep},\u_{\ep},Q_{\ep})$ to   the problem  \eqref{n1} for any fixed $\ep>0$.  Consequently,        the estimates \eqref{a19}-\eqref{a19a}  follow directly from \eqref{a17} and Lemma \ref{lem2.1b}.

We now prove Proposition \ref{c} as well as \eqref{a20}, leading to the  complete  proof of Theorem \ref{t3.1}.  We will drop the subscript $\ep$ and use $(\n,c,\u,Q)$ to denote $(\n_{\ep},c_{\ep},\u_{\ep},Q_{\ep})$  for the sake of simplicity.
Observe that  $(\n,c,\u,Q)$ solves  \begin{equation}\label{n4}
\left\{\ba &  \ep  \n + {\rm div}(\n \u) =\ep^{2} \lap \n +\ep \n_{0},\\
& \u \cdot \na c =\lap c+g_{1},  \\ 
& \lap Q= \ep Q+ t F^{1}(\u,Q),\\
&{\rm div} \mathbb{S}_{ns}(\na \u)=t F^{2}(\u,Q),\\
&\frac{\p \n}{\p n}=0,\,\,\,\,\u=0,  \,\,\,\frac{\p c}{\p n}=0,\,\,\,
\frac{\p Q}{\p n}=0,\quad {\rm on}\,\,\,\p\mathcal{O},\\
& \int \rho =m_1, \quad \int c =m_2.\ea \right.
\end{equation}
To prove Proposition  \ref{c},  it suffices to   show that there is  a   constant   $M<\infty$  independent of    $t$ such that
  \be\la{a8} \|(\u,Q)\|_{\mathcal{W}} <M.\ee


\subsection{Basic inequalities}
Multiplying  $\eqref{n4}_{1}$ by $\frac{t}{2}|\u|^{2}$ and  $\eqref{n4}_{4}$ by $\u$  respectively,  we get
  \be\la{1.1}\ba &\frac{\ep t}{2}\int  (\n+\n_{0})|\u|^{2}
   + t\int  \u\cdot \na \left(\de\n^{4}+\n^{\g} \right)  +\mu \int |\na \u|^{2}+(\lambda +\mu)\int |{\rm div} \u|^{2}\\
  & =t\int  \n g_{2}   \cdot \u-t\int \lap  Q  :(\u\cdot \na)   Q
+\frac{1}{2}  \div \u \, tr(Q^{2}) \left(1+\frac{c_{*}}{2}tr(Q^{2}) \right)\\
&\quad +t\int \div (Q\lap Q-\lap Q Q)\u-t\si_{*}\int c^{2}Q :\na \u,\ea\ee
where   we have  used \eqref{b0}   and the following computation
\bnn\ba &\int\div \left(-\na Q\odot \na Q+\frac{1}{2}|\na Q|^{2}\right) \u\\
&= -\int \p_{i}(\p_{i} Q^{kl}  \p_{j}  Q^{kl}) \u^{j}+\frac{1}{2}\int \u^{j}\p_{j}|\p_{i} Q|^{2}
  =-\int \lap  Q  :(\u\cdot \na)   Q. \ea\enn
By  $\eqref{n4}_{1},$ 
one deduces
\bnn\ba &\int  \u\cdot\na \left(\de\n^{4}+\n^{\g}\right)=\int \n \u\cdot\na \left( \frac{4\de}{3}\n^{3}+\frac{\g}{\g-1}\n^{\g-1}\right) \\
&=\ep \int \left(\frac{4\de}{3}\n^{3}+\frac{\g}{\g-1}\n^{\g-1}\right)(\n-\n_{0})
 +\ep^{2} \int \na \left(\frac{4\de}{3}\n^{3}+\frac{\g}{\g-1}\n^{\g-1}\right)\cdot\na \n\\
 &\ge \ep \int \left(\frac{\de}{3}\n^{4}+ \frac{1}{\g-1}\n^{\g}\right)-
 \ep \int \left(\frac{\de}{3}\n_{0}^{4}+\frac{1}{\g-1}\n_{0}^{\g}\right) + \ep^{2}\int \left( \de|\na \n^{2}|^{2}+\frac{4}{\g} |\na \n^{\frac{\g}{2}}|^{2}\right).
 \ea\enn
Then substituting the above estimate into    \eqref{1.1} gives
   \be\la{1.4}\ba & \frac{\ep t}{2}\int (\n+\n_{0})|\u|^{2}
 +\ep t\int\left(\frac{\de}{3}\n^{4}+ \frac{1}{\g-1}\n^{\g}\right)
 +\ep^{2}t\int \left( \de|\na \n^{2}|^{2}+\frac{4}{\g} |\na \n^{\frac{\g}{2}}|^{2}\right)\\
 &\quad +\mu \int |\na \u|^{2}+(\lambda +\mu)\int |{\rm div} \u|^{2}\\
   & \le  \ep t\int \left(\frac{\de}{3}\n_{0}^{4}+\frac{1}{\g-1}\n_{0}^{\g}\right)
   +t\int  \n g_{2}   \cdot \u\\
   &\quad -t\int \lap  Q  :(\u\cdot \na)   Q
+\frac{1}{2}  \div \u \, {\rm tr}(Q^{2}) \left(1+\frac{c_{*}}{2}{\rm tr}(Q^{2}) \right)\\
&\quad +t\int \div (Q\lap Q-\lap Q Q)\u-t\si_{*}\int c^{2}Q :\na \u\\
&=:\ep t\int \left(\frac{\de}{3}\n_{0}^{4}+\frac{1}{\g-1}\n_{0}^{\g}\right)+\sum_{i=1}^{4} I_{i}.\ea\ee

Next,   following \cite{chen} we  multiply    $\eqref{n4}_{3}$ by $-\lap Q+Q+c_{*}Q{\rm tr}(Q^{2})$  to obtain
 \be\la{1.3}\ba &\int  |\lap Q|^{2}+(1+\ep) \int  |\na Q|^{2}
 +\ep \int \left( |Q|^{2}+c_{*}|Q|^{4}\right)
 +t c_{*} \int \left(|Q|^{4}+c_{*}|Q|^{6}\right) \\
 &= 2t c_{*}\int \lap Q: Qtr(Q^{2})+t\int \frac{(c-c_{*})}{2}Q:(\lap Q-Q-c_{*}Qtr(Q^{2}))
 \\
 &\quad+t\int b\left(Q^{2}-\frac{1}{3}\mathbb{I}tr(Q^{2})\right):\left(-\lap Q+Q+c_{*}Qtr(Q^{2})\right)+
t\int (Q\langle \o\rangle-\langle \o\rangle Q):\lap Q\\
 &\quad-t\int (Q\langle \o\rangle-\langle \o\rangle Q):(Q+c_{*}Qtr(Q^{2}))+ t \int \u\cdot \na Q :\left(\lap Q-Q-c_{*}Qtr(Q^{2})\right)\\
 &\quad+t\int \langle g_{3}\rangle:\left(-\lap Q+Q+c_{*}Qtr(Q^{2})\right)\\
 &=:\sum_{j=1}^{7}J_{j}.\ea\ee

\subsection{Uniform in $\ep$ and $t$ estimates}  Now we estimate the terms on the right-hand side   in \eqref{1.4} and \eqref{1.3}.
In this subsection, the generic
 constant $C$ may rely    on   $ \lambda$, $\mu$, $ m_{1}$,
$\de$, $\g$,   $|\mathcal{O}|$, $c_{*}$, $\si_{*}$, $\|g_{1}\|_{L^{\infty}}$, $\|g_{2}\|_{L^{\infty}}$, $\|g_{3}\|_{L^{\infty}}$, but not on $t
$ and $\ep.$

Direct calculations show
\be\la{pp1} J_{1}=-2t c_{*}\int |\na Q|^{2}tr(Q^{2})-t c_{*}\int |\na tr(Q^{2})|^{2}\le0, \ee
 \be\la{pp2}\ba J_{6}&=t\int \u\cdot \na Q :(\lap Q-Q-c_{*}Qtr(Q^{2}))\\
&=t\int \u\cdot\na Q : \lap Q-
t \int  \u\cdot \na \left(\frac{1}{2}(trQ^{2})+\frac{c_{*}}{4}((trQ^{2}))^{2}\right) =-I_{2},\ea\ee
and by the fact that $Q$ is symmetric and $\o$ is skew-symmetric, one has
\be\la{pp1a}J_{5}=t\int (Q\o-\o Q):(Q+c_{*}Qtr(Q^{2}))=0.\ee
Moreover, we have the following computation:
\be\la{pp3}\ba I_{3}+J_{4}&=t\int \div (Q\lap Q-\lap Q Q)\u+t\int (Q\langle \o\rangle-\langle \o\rangle Q):\lap Q\\
&=t\int \div (Q\lap Q-\lap Q Q)\u+t \int (Q  \o - \o  Q):\lap Q\\
&\quad+t\int \left[(Q\langle \o\rangle-\langle \o\rangle Q)-(Q\langle \o\rangle-\langle \o\rangle Q)\right]:\lap Q \\
&=t\int \left[(Q\langle \o\rangle-\langle \o\rangle Q)-(Q\langle \o\rangle-\langle \o\rangle Q)\right]:\lap Q \\
&\le t C\|\langle \na \u\rangle-\na \u\|_{L^{2}}\|Q\|_{L^{\infty}}\|\lap Q\|_{L^{2}}\\
&\le t C\|\langle \na \u\rangle-\na \u\|_{L^{2}}\left(\|Q\|_{L^{4}}^{2}+\|\lap Q\|_{L^{2}}^{2}\right),\ea\ee
where the last equality   is  from \cite[Lemma A1]{chen0}, and the last inequality   is from the interpolation inequality.

As a result of \eqref{pp1}-\eqref{pp3}, inequalities  \eqref{1.4} and  \eqref{1.3} provide us
  \be\la{1.41}\ba &   \frac{\ep t}{2}\int (\n+\n_{0})|\u|^{2}
 +\ep t\int\left(\frac{\de}{3}\n^{4}+ \frac{1}{\g-1}\n^{\g}\right)
 +\ep^{2}t\int \left( \de|\na \n^{2}|^{2}+\frac{4}{\g} |\na \n^{\frac{\g}{2}}|^{2}\right)\\
 &\quad +\mu\int |\na \u|^{2}+ \int  |\lap Q|^{2}+  \int  |\na Q|^{2}
 +t c_{*} \int \left(|Q|^{4}+c_{*}|Q|^{6}\right) \\
  &\le C t +Ct \|\langle \na \u\rangle-\na \u\|_{L^{2}}\left(\|Q\|_{L^{4}}^{2}+\|\lap Q\|_{L^{2}}^{2}\right) +I_{1}+I_{4}+J_{2}+J_{3} +J_{7}.\ea\ee
 By   \eqref{0}, we have 
 \be\la{s11}\ba  &|I_{1}| +|J_{3}|+|J_{7}| \\
 &\le Ct\|g_{2}\|_{L^{\infty}}\|\n\|_{L^{\frac{6}{5}}}\|\u\|_{L^{6}} +C\si(1+  \|g_{3}\|_{L^{2}})\left(\|\lap Q\|_{L^{2}}+ \|Q\|_{L^{6}}^{5}+1\right)\\
 &\le C t+C t \|\n\|_{L^{\frac{6}{5}}}^{2} + \frac{t c_{*}^{2}}{4}
  \int  |Q|^{6} + \frac{\mu}{4}\int |\na \u|^{2}
 + \frac{1}{4}\int  |\lap Q|^{2},\ea\ee
 and \be\la{s11a}\ba  & |I_{4}|+|J_{2}| \\
 &\le C t \|c\|_{L^{\infty}}^{2}\|Q\|_{L^{2}}\|\na\u\|_{L^{2}}+Ct(1+ \|c\|_{L^{\infty}})
 \left(\|Q\|_{L^{2}}\|\lap Q \|_{L^{2}}+  \|Q\|_{L^{6}}^{4}+1\right)\\
 &\le Ct +Ct\|c\|_{L^{\infty}}^{6} + \frac{t c_{*}^{2}}{4}
  \int  |Q|^{6} + \frac{\mu}{4}\int |\na \u|^{2}
 + \frac{1}{4}\int  |\lap Q|^{2}.\ea\ee
Substituting  \eqref{s11} and \eqref{s11a} into  \eqref{1.41} leads to
  \be\la{a26n}\ba &  \ep  \int (\n+\n_{0})|\u|^{2}
 +\ep  \int\left(\frac{\de}{3}\n^{4}+ \frac{1}{\g-1}\n^{\g}\right)
 +\ep^{2} \int \left( \de|\na \n^{2}|^{2}+\frac{4}{\g} |\na \n^{\frac{\g}{2}}|^{2}\right)\\
 &\quad + \int |\na \u|^{2}+ \int  |\lap Q|^{2}+  \int  |\na Q|^{2}
 +  \int \left(|Q|^{4}+|Q|^{6}\right) \\
  &\le C  +C\|\n\|_{L^{\frac{6}{5}}}^{2}+C \|c\|_{L^{\infty}}^{6}+C \|\langle \na \u\rangle-\na \u\|_{L^{2}}\left(\|Q\|_{L^{4}}^{2}+\|\lap Q\|_{L^{2}}^{2}\right).\ea\ee
Observe that the  constant $C$ in \eqref{a26n} is independent of  $\ep$,   then we may  choose   $\ep$  sufficiently  small and use the standard properties of mollification such that   $C\|\langle \na \u\rangle-\na \u\|_{L^{2}}\le \frac{1}{2}$ to obtain 
  \be\la{a26}\ba &  \ep  \int (\n+\n_{0})|\u|^{2}
 +\ep  \int\left(\frac{\de}{3}\n^{4}+ \frac{1}{\g-1}\n^{\g}\right)
 +\ep^{2} \int \left( \de|\na \n^{2}|^{2}+\frac{4}{\g} |\na \n^{\frac{\g}{2}}|^{2}\right)\\
 &\quad + \int |\na \u|^{2}+ \int  |\lap Q|^{2}+  \int  |\na Q|^{2}
 +  \int \left(|Q|^{4}+|Q|^{6}\right) \\
  &\le C  +C\|\n\|_{L^{\frac{6}{5}}}^{2}+C \|c\|_{L^{\infty}}^{6}.\ea\ee

\subsection{$\ep$-dependent  regularity} Thanks  to  $\|\n\|_{L^{1}}=m_{1}$ and the interpolation inequalities, it follows from \eqref{a26} that
\be\la{1.41b}\ba &  \ep  \int (\n+\n_{0})|\u|^{2}
 +\ep  \int\left(\frac{\de}{3}\n^{4}+ \frac{1}{\g-1}\n^{\g}\right)
 +\ep^{2} \int \left( \de|\na \n^{2}|^{2}+\frac{4}{\g} |\na \n^{\frac{\g}{2}}|^{2}\right)\\
 &\quad + \int |\na \u|^{2}+ \int  |\lap Q|^{2}+  \int  |\na Q|^{2}
 +  \int \left(|Q|^{4}+|Q|^{6}\right) \\
  &\le C\left(1 + \|c\|_{L^{\infty}}^{6}\right),\ea\ee
   where and in the rest of this subsection, the constant  $C$ may   rely on $\ep.$

\bigskip

In order to bound $\|c\|_{L^{\infty}}$   in  \eqref{1.41b}, we need the following lemma:
  \begin{lemma}\la{mos} There exist   constants $C$ and $C_{1}$    depending  only on $|\mathcal{O}|$    such that
\be\la{3.3}\|c\|_{L^{\infty}}\le C(1+\|\u\|_{L^{6}})^{C_{1}}(1+\|g_{1}\|_{L^{\infty}})m_{2}.\ee
\end{lemma}
We will continue the proof of Theorem \ref{t3.1} and postpone the proof of Lemma \ref{mos}   to the end of this section.
With the help of  \eqref{3.3} and \eqref{0}, we estimate   \eqref{1.41b} as
 \be\la{a26av}\ba &  \ep  \int (\n+\n_{0})|\u|^{2}
 +\ep  \int\left(\frac{\de}{3}\n^{4}+ \frac{1}{\g-1}\n^{\g}\right)
 +\ep^{2} \int \left( \de|\na \n^{2}|^{2}+\frac{4}{\g} |\na \n^{\frac{\g}{2}}|^{2}\right)\\
 &\quad + \int |\na \u|^{2}+ \int  |\lap Q|^{2}+  \int  |\na Q|^{2}
 +  \int \left(|Q|^{4}+|Q|^{6}\right) \\
  &\le C \left(1 + \|c\|_{L^{\infty}}^{6}\right)\\
  &\le  C \left(1 +   m_{2}^{6}\|\na \u\|_{L^{2}}^{6C_{1}}\right)\\
  &\le 2 C,\ea\ee
where the last inequality   is valid if
\be\la{ss40} m_{2}\le (2C)^{-\frac{C_{1}}{2}}.\ee
We remark that, by  \eqref{ss40}, the choice of   $m_{2}$ depends only on $m_{1},$ $c_{*},$ $\si_{*},$ $\mu,\lambda,$ $\g,$ $\ep,$ $\de,$ $|\mathcal{O}|$,  $\|g_{1}\|_{L^{\infty}},$ $\|g_{2}\|_{L^{\infty}},$ $\|g_{3}\|_{L^{\infty}}.$   

Having \eqref{a26av} obtained,  we multiply  $\eqref{n4}_{2}$ by $c$ and utilize  $\eqref{0}$  to deduce
\be\ba\la{a26a1} \int |\na c|^{2}&\le \int |g_{1}c|+\int |\u\cdot \na c| |c|\\
&\le  \|c\|_{L^{\infty}}(\|g_{1}\|_{L^{\infty}}+\|\na c\|_{L^{2}}\|\na \u\|_{L^{2}})\\
&\le \frac{1}{2}\|\na c\|_{L^{2}}^{2}+C.\ea\ee
If we multiply $\eqref{n4}_{2}$  by $-\lap c$,  we obtain
\be\ba\la{a26a2} \int |\lap c|^{2}&\le \int |g_{1} \lap c|+|  \u \cdot \na c \lap c|\\
&\le C \|\lap c\|_{L^{2}}(1+\|\na c\|_{L^{3}}\|\na \u\|_{L^{2}})\\
&\le C \|\lap c\|_{L^{2}}(1+\|\na c\|_{L^{2}}^{\frac{1}{2}}\|\lap c\|_{L^{2}}^{\frac{1}{2}}\|\na \u\|_{L^{2}})\\
&\le  \frac{1}{2}\|\lap c\|_{L^{2}}^{2}+C\|\na c\|_{L^{2}}.\ea\ee
The last two estimates \eqref{a26a1} and \eqref{a26a2} guarantee that,
 for small $m_{2},$
\be\la{a26a} \|\na c\|_{L^{2}}^{2}+\|\lap c\|_{L^{2}}^{2} \le C. \ee

\bigskip

We next  consider the    Neumann boundary  problem:
\be\la{n}\lap \n= \div b \quad {\rm with} \quad \frac{\p \n} {\p n}\Big|_{\p \mathcal{O}}=0.\ee

\begin{lemma}{\rm \cite[Lemma 4.27]{novo} } 
\la{lem2.3}
 Let $p\in (1,\infty)$  and  $b\in  L^{p}(\mathcal{O},\r)$.  Then  the problem  \eqref{n} admits  a  solution  $\n\in W^{1,p}(\mathcal{O})$,   satisfying
\bnn \int \na \n\cdot \na \phi=\int b\cdot \na \phi,\quad \forall\,\,\,
\phi\in C^{\infty}(\overline{\mathcal{O}}),\enn
and  the  estimates
\bnn\|\na \n\|_{L^{p}}\le C(p,|\mathcal{O}|)\|b\|_{L^{p}}\quad{\rm and }\quad
\|\na \n\|_{W^{1,p}}\le C(p,|\mathcal{O}|)(\|b\|_{L^{p}}+ \|\div b\|_{L^{p}} ).\enn
\end{lemma}

\begin{lemma}{\rm \cite[Lemma 3.17]{novo}} \la{lem2.2g} There is a linear operator $\mathcal{B}= (\mathcal{B}^{1},\mathcal{B}^{2},\mathcal{B}^{3})$ which satisfies

\smallskip

{\rm (i)} Let $\overline{L^{p}}:=\left\{f\in L^{p}\,\,|\int f=0\right\}$ with $p\in (1,\infty).$ Then, \bnn  \mathcal{B}(f): \overline{L^{p}}\mapsto \left(W_{0}^{1,p}\right)^{3},\quad  \div \mathcal{B}(f)=f\,\,a.e.\,\,{\rm in}\,\,\mathcal{O},\quad \forall\,\,f\in\overline{L^{p}}.\enn

{\rm (ii)}  For any $g\in L^{p}(\mathcal{O},\r)$ with $g\cdot n|_{\p \mathcal{O}}=0$, \bnn \|\na \mathcal{B}(f)\|_{L^{p}}\le C \|f\|_{L^{p}},\quad \|\mathcal{B}(\div g)\|_{L^{p}}\le C\|g\|_{L^{p}},\enn where the constant $C$ depends only on $p$ and $|\mathcal{O}|.$
\end{lemma}

Rewrite   $\eqref{n4}_{1}$ as
\be\la{a30}  \ep^{2}\lap \n=\div (\n \u+\ep \mathcal{B}(\n-\n_{0})).\ee
Applying Lemma \ref{lem2.3}  to   \eqref{a30},   and  using \eqref{a26av},  Lemma \ref{lem2.2g},   we find
   \bnn\ba \|\na \n\|_{L^{4}}&\le C \|\n \u+  \mathcal{B}(\n-\n_{0})\|_{L^{4}} \\
  &\le C \|\n \u\|_{L^{4}}  + C \|\na \mathcal{B}(\n-\n_{0}) \|_{L^{4}} \\
  &\le C \|\u\|_{L^{6}} \|\n^{2}\|_{L^{6}}^{\frac{1}{2}}+C \|\n-\n_{0}\|_{L^{4}}\le   C,\ea\enn
then using $L^{p}$ estimate on \eqref{a30} yields
 \be\la{a33} \ba\|\n \|_{H^{2}}&\le C\|\div (\n \u+\ep \mathcal{B}(\n-\n_{0}))\|_{L^{2}}\\
 & \le  C\|\u\cdot\na \n +\n \div \u\|_{L^{2}}  +C\|\div \mathcal{B}(\n-\n_{0})\|_{L^{2}}  \le C.\ea\ee

By virtue of \eqref{a26av} and   \eqref{a26a}, one has  $\|\u \cdot \na Q +t F^{1}(\u,Q)\|_{W^{1,\frac{3}{2}}}  \le C$, and hence 
 \be\la{k1} \| Q\|_{W^{3,\frac{3}{2}}}  \le C\ee from  $\eqref{n4}_{4}$.
  By \eqref{a33} and \eqref{k1},  we deduce
$ \|t F^{2}(\u,Q)\|_{L^{\frac{3}{2}}}  \le C,$  which together with $L^{p}$ regularity and $\eqref{n4}_{3}$ imply    \be\la{k2}\|\u\|_{W^{2,\frac{3}{2}}}\le C.\ee Finally,  using  \eqref{a26a}, \eqref{k1}, \eqref{k2}, and      $L^{p}$  regularity,  we obtain from  $\eqref{n4}_{2}$ that
\be\la{k3}\| c\|_{W^{2,p}}\le C  \quad (p<6).\ee
As a result of \eqref{a33}-\eqref{k3},
using  bootstrap procedure generates,  for $p\in (1,\infty)$, 
\bnn\|(\u, c)\|_{W^{2,p}}\le C,\quad \|Q\|_{W^{3,p}}\le C.\enn 
We have completed the proof of  Proposition \ref{c} and \eqref{a20}, except that we still need to prove  Lemma   \ref{mos}.
\end{proof}

The last part of this section is to give a proof of  Lemma   \ref{mos}.
 \begin{proof}[Proof of  Lemma   \ref{mos}]  The proof of Lemma   \ref{mos} is based on  a Moser-type iteration technique.
Fix  $x_{0}\in \mathcal{O}.$   Let  $B_{R}=B_{R}(x_{0})\subset \mathcal{O}$ be a ball centered in $x_{0}$ with radius $R\le 1$, and  let  $\eta(x)$ be a smooth cut-off such that,  for all $\frac{R}{2}\le r<r{'}\le R$, 
 \bnn \eta(x) \equiv1\,\,{\rm if}\,\,\,x\in B_{r},\quad \eta(x)\equiv0\,\,{\rm if}\,\,\,x\notin B_{r{'}},\quad |\na \eta|\le \frac{2}{(r^{'}-r)}.\enn
In the sequel, we assume   $\|g_{1}\|_{L^{\infty}}\le \frac{1}{2}$. Otherwise, we will multiply    $\eqref{n4}_{2}$ by $(2\|g_{1}\|_{L^{\infty}})^{-1} $ and consider $\frac{c}{2\|g_{1}\|_{L^{\infty}}}.$

A simple computation shows
 \bnn\ba -\int \eta^{2} c^{p}\lap c &= \frac{4p}{(p+1)^{2}}\int \eta^{2}|\na c^{\frac{p+1}{2}}|^{2}
 +\frac{4}{p+1}\int \eta  c^{\frac{p+1}{2}} \na \eta \na c^{\frac{p+1}{2}} dx\\
 &\ge \frac{2p}{(p+1)^{2}}\int \eta^{2}|\na c^{\frac{p+1}{2}}|^{2}-\frac{2}{p}\int |\na \eta|^{2}  c^{p+1},  \ea\enn
and  \bnn\ba  \int \eta^{2} c^{p} \u\cdot \na c&=\frac{2}{p+1}\int \eta^{2}  \u \cdot \na c^{\frac{p+1}{2}}  c^{\frac{p+1}{2}}\\
&\le \frac{p}{(p+1)^{2}}\int \eta^{2}|\na c^{\frac{p+1}{2}}|^{2}+\frac{1}{p}\int   \eta^{2} |\u|^{2} c^{p+1}.\ea\enn
With the above two inequalities,  and the fact  that  $p^{2}\|g_{1}\|_{L^{\infty}}^{p+1}$ is uniformly bounded for any $p\ge1$,  we multiply $\eqref{n4}_{1}$ by  $\eta^{2} c^{p}\,\,(p\ge1)$ to obtain 
  \be\la{3.6a}\ba  \int \eta^{2}\left|\na c^{\frac{p+1}{2}}\right|^{2}dx
  &\le C\int   \left(|\na \eta|^{2} +\eta^{2} |\u|^{2}\right) c^{p+1}+Cp^{2}\int \eta^{2}c^{p} g_{1} \\
  &\le C\|\u\|_{L^{6}}^{2}\left(\int_{B_{r'}} c^{\frac{3(p+1)}{2}} \right)^{\frac{2}{3}}+C \int |\na \eta|^{2} c^{p+1}+C p^{2}\|g_{3}\|_{L^{\infty}}^{p+1}\\
   &\le C\|\u\|_{L^{6}}^{2}\left(\int_{B_{r'}}   c^{\frac{3(p+1)}{2}}\right)^{\frac{2}{3}}+C \int |\na \eta|^{2} c^{p+1}+C, \ea\ee where the constant $C$ may rely  on $R$ and  $|\mathcal{O}|$   but not on $p$.

Owing to  the Sobolev embeddings (cf. \cite{ad}),   for  $f\in H_{0}^{1}(B_{R})$  one has 
 $ \|f\|_{L^{6}}\le C\|\na f\|_{L^{2}}.$
Thus,
 \bnn\ba  \left(\int_{B_{r}}c^{3(p+1)}\right)^{\frac{1}{3}}&\le \left(\int\left|\eta c^{\frac{p+1}{2}}\right|^{6}\right)^{\frac{1}{3}}\\
 &\le C \int\left|\na(\eta c^{\frac{p+1}{2}})\right|^{2}\\
 &\le C \left(\int\eta^{2}|\na c^{\frac{p+1}{2}} |^{2}+\int c^{p+1}|\na\eta|^{2}\right), \ea\enn
which  together with  \eqref{3.6a} give us the following estimate:
  \be\la{3.7}\ba  \left(\int_{B_{r}}c^{3(p+1)}\right)^{\frac{1}{3}}
&\le C\|\u\|_{L^{6}}^{2}\left(\int_{B_{r'}}   c^{\frac{3(p+1)}{2}}\right)^{\frac{2}{3}}
+ C\int |\na \eta|^{2} c^{p+1}+C\\
&\le C\left(\|\u\|_{L^{6}}^{2}+\frac{1}{|r'-r|^{2}}\right)\left(\int_{B_{r'}}   c^{\frac{3(p+1)}{2}}\right)^{\frac{2}{3}}+C.\ea\ee
Choosing
  \bnn r'=r_{k-1} \quad {\rm and}\quad  r_{k}=\frac{R}{2}\left(1+\frac{1}{2^{k}}\right),\quad k=1,2,....,\enn
we obtain from   \eqref{3.7} that
 \be\la{3.8a}\ba   \left( \int_{B_{r_{k}}}c^{3(p+1)}\right)^{\frac{1}{2}}
 \le C(1+\|\u\|_{L^{6}}^{3}) 2^{3(k+1)}  \left( \int_{B_{r_{k-1}}}  c^{\frac{3(p+1)}{2}} \right)+C.\ea\ee

If   $ \int  c^{\frac{3(p+1)}{2}} $ is  bounded uniformly  in $p$, then   \eqref{3.3}  follows directly by taking $p\rightarrow \infty$, subject to a subsequence.
Otherwise,   $ \int  c^{\frac{3(p+1)}{2}} \rightarrow \infty$ as  $p$ goes to infinity.  Hence,  without loss of generality we may assume that  $ \int  c^{\frac{3(p+1)}{2}} \ge C$ for all $p\ge 1$ and rewrite    \eqref{3.8a}  as
 \be\la{3.8}\ba   \left( \int_{B_{r_{k}}}c^{3(p+1)}\right)^{\frac{1}{2}}
 \le C \left(1+\|\u\|_{L^{6}}^{3}\right) 2^{3(k+1)}  \left( \int_{B_{r_{k-1}}}  c^{\frac{3(p+1)}{2}} \right).\ea\ee
Selecting   $ \frac{3(p+1)}{2}=2^{k-1}$ in
 \eqref{3.8},  one has
  \bnn \left( \int_{B_{r_{k}}}c^{2^{k}}\right)^{\frac{1}{2}}
 \le C\left(1+\|\u\|_{L^{6}}^{3}\right) 2^{3(k+1)} \left(\frac{1}{R^{3}}\int_{r_{k-1}}c^{2^{k-1}}\right),\enn
 which yields  by the deduction argument
  \be\la{s6} \left( \int_{B_{r_{k}}}c^{2^{k}}\right)^{\frac{1}{2^{k}}}
 \le  C  \left(1+\|\u\|_{L^{6}}^{3}\right)^{a} 2^{b}
 \int_{B_{R}}c^{2} \le C\left(1+\|\u\|_{L^{6}}^{3}\right)^{a} \int_{B_{R}}c^{2},\ee
 where  \bnn a=\sum_{k=1}^{\infty}\frac{1}{2^{k}}<\infty,\quad b=\sum_{k=1}^{\infty}\frac{3(k+1)}{2^{k}}<\infty.\enn
Sending $k\rightarrow \infty$  in \eqref{s6}  yields  
\be\la{3.15}
\ba  \sup_{x \in B_{\frac{R}{2}}}c^{2}& =\lim_{k\rightarrow \infty}\left( \int_{B_{\frac{R}{2}}}c^{2^{k}}\right)^{\frac{1}{2^{k}}}  \le C\left(1+\|\u\|_{L^{6}}^{3}\right)^{a} \int_{B_{R}}c^{2}.\ea\ee
Then \eqref{3.3} follows from  \eqref{3.15} together  with the fact $c\ge0$ and $\|c\|_{L^{1}}=m_{2}$. 

We remark that for the  case of  boundary points, we  can apply  local   flattening technique since the domain has smooth boundary  $\p \mathcal{O}$; while in the  case when $x_{0}\in \mathcal{O}$  is near the   boundary, we follow similarly the   ideas in \cite[Section 4]{lw}. 
Therefore, we complete  the proof of Proposition \ref{c} as well as \eqref{a20} and hence the  proof of Theorem \ref{t3.1}. 

\end{proof}


\section{$\ep$-Limit   for  the Approximate Solutions}

In this section, we shall take the $\ep$-limit  of  the approximate solutions obtained in Theorem \ref{t3.1} as $\ep\to 0$ for fixed $\delta\in(0,1)$,   and  prove the existence of solutions to the following problem:

 \begin{theorem} \la{t4.1} Under the same   assumptions  as in Theorem \ref{t3.1},    the  system
 \begin{equation}\label{n6}
\left\{\ba &\div (\n \u)=0,\\
& \u\cdot \na c-\lap c=g_{1}, \\
&\div(\n \u\otimes \u)+\na \left(\de \n^{4} + \n^{\g} \right)
 -\div\left(\mathbb{S}_{ns}+ \mathbb{S}_{1}(Q)+\mathbb{S}_{2}(c,Q) \right)=\n  g_{2},\\
& \u\cdot \na Q+Q\o-\o Q +c_{*}Qtr(Q^{2})+\frac{(c-c_{*})}{2}Q-b\left(Q^{2}-\frac{1}{3}tr(Q^{2})
 \mathbb{I}\right)-\lap Q=g_{3},\\
 &\u=0,\,\,\,\frac{\p c}{\p n}=0,\,\,\,\frac{\p Q}{\p n}=0,\quad {\rm on} \,\,\,\p\mathcal{O},\ea \right.
\end{equation}    admits a solution  $(\n,c,\u,Q)$  in the sense of distributions for any $\delta\in(0,1)$, satisfying  \be\la{e7b}\int \n =m_{1},\,\, 0\le \n\in L^{5}(\mathcal{O}),\qquad \int c =m_{2},\,\, 0\le c\in H^{2}(\mathcal{O}),\ee
 \be\ba\la{e7c} \u\in  H_{0}^{1}(\mathcal{O},\r),\qquad Q\in H^{2}(\mathcal{O},S_{0}^{3}).\ea\ee In particular, $\eqref{n6}_{2}$ and $\eqref{n6}_{4}$ are satisfied  almost everywhere in $\mathcal{O},$ and
 $\eqref{n6}_{1}$   holds in the sense of renormalized solutions, namely,
\bnn   {\rm div}(b(\n) \u) +(b'(\n)\n-b(\n))\div \u=0\quad {\rm in}\quad \mathcal{D}'(\mathbb{R}^{3}),\enn
where $b(z)=z$, or  $b\in C^{1}([0,\infty))$ with $b'(z)=0$ for large $z$.
\end{theorem}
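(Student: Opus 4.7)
The plan is to pass to the limit $\epsilon\to 0$ in the approximate solutions $(\n_\ep, c_\ep, \u_\ep, Q_\ep)$ from Theorem \ref{t3.1}, keeping $\de>0$ fixed. The first task is to upgrade the estimates of Section 3 to be uniform in $\ep$. Reinspecting \eqref{a26} with $\de>0$ fixed, and combining it with the Moser iteration Lemma \ref{mos} under the smallness of $m_2$, one obtains $\ep$-independent bounds on $\|\n_\ep\|_{L^4}$, $\|\u_\ep\|_{H^1_0}$, $\|Q_\ep\|_{H^2\cap L^6}$ and $\|c_\ep\|_{L^\infty}$, while $\ep^2\|\nabla \n_\ep^2\|_{L^2}\to 0$. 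To improve density integrability to $L^5$ (as required by \eqref{e7b}) I would test the momentum equation against the Bogovskii corrector $\phi_\ep=\mathcal{B}(\n_\ep-\bar{\n_\ep})$ from Lemma \ref{lem2.2g}: this produces $\int \de \n_\ep^5+\n_\ep^{\g+1}$ controlled by the already-known quantities, giving the uniform bound $\|\n_\ep\|_{L^5}\le C(\de)$. A bootstrap using Lemmas \ref{lem2.3}, \ref{lem2.1c}, \ref{lem2.1d} then yields $\|c_\ep\|_{H^2}$ and $\|Q_\ep\|_{H^2}$ uniformly in $\ep$.

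With these bounds in hand I extract subsequences: $\n_\ep\rightharpoonup \n$ weakly in $L^5$; $\u_\ep\rightharpoonup \u$ weakly in $H^1_0$ and strongly in $L^p$, $p<6$; $c_\ep\rightharpoonup c$ weakly in $H^2$ and strongly in $W^{1,p}$, $p<6$; $Q_\ep\rightharpoonup Q$ weakly in $H^2$ and strongly in $W^{1,p}$, $p<6$. The density equation $\ep\n_\ep+\div(\n_\ep\u_\ep)=\ep^2\lap\n_\ep+\ep\n_0$ passes to $\div(\n\u)=0$ in $\mathcal{D}'$ once we know $\n_\ep\u_\ep\rightharpoonup \n\u$, which follows from the strong convergence of $\u_\ep$. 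The equations $\eqref{n1}_2$ and $\eqref{n1}_4$ can be passed to the limit directly using the strong convergence of $\u_\ep,c_\ep, Q_\ep$ and of $\nabla Q_\ep, \nabla c_\ep$, together with weak convergence of $\lap c_\ep, \lap Q_\ep$. The only products that are genuinely weak--weak are $\n\u\otimes\u$, the pressure $\de\n^4+\n^\g$, and the terms $Q\lap Q-\lap Q\, Q$ appearing in $\mathbb{S}_2$.

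The term $Q\lap Q-\lap Q\, Q$ is handled after an integration by parts in the momentum equation: writing $\div(Q\lap Q-\lap Q\, Q)=\p_j(Q^{ik}\lap Q^{kj}-\lap Q^{ik}Q^{kj})$ and using the symmetry of $Q$ one can rewrite this factor so that at least one of $\lap Q_\ep$ converts to a product of $\nabla Q_\ep$'s that converges strongly in $L^2$, and the remaining weak convergences suffice (this is exactly the mechanism advertised in the introduction around \eqref{k4}). Combined with strong convergence of $c_\ep$ this allows the identification of the weak limits for $\mathbb{S}_1(Q)$ and $\mathbb{S}_2(c,Q)$.

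The main obstacle, as usual, is the strong convergence of $\n_\ep$ needed to identify $\overline{\de\n^4+\n^\g}=\de\n^4+\n^\g$ and the convective term $\n\u\otimes\u$. I would follow the Lions--Feireisl effective viscous flux strategy: test the momentum equation against $\phi\,\nabla\lap^{-1}(\n_\ep-\overline{\n_\ep})$ versus the limit equation tested against $\phi\,\nabla\lap^{-1}(\n-\overline{\n})$. The commutator lemma then produces the identity
\[
\lim_{\ep\to 0}\int \phi\bigl(\de\n_\ep^4+\n_\ep^\g-(2\mu+\lambda)\div\u_\ep\bigr)\n_\ep=\int\phi\bigl(\overline{\de\n^4+\n^\g}-(2\mu+\lambda)\div\u\bigr)\n,
\]
provided that the cross terms involving $Q_\ep$ and $c_\ep$ pass to the limit, which is where the symmetry trick for $Q\lap Q-\lap Q\,Q$ applied in the variant with test function $\p_j\phi\,\p_i\lap^{-1}(\n_\ep)$ is essential. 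Once this effective viscous flux identity is established, the $L^5$ bound and monotonicity of $z\mapsto \de z^4+z^\g$ (via either the Lions argument on $\n_\ep\ln\n_\ep$ or the Feireisl argument using the oscillation defect measure) yield strong convergence of $\n_\ep$ in $L^1$, hence almost everywhere. The renormalized form of the continuity equation is then inherited via DiPerna--Lions theory thanks to $\n\in L^5$ and $\u\in H^1_0$, completing the proof of Theorem \ref{t4.1}.
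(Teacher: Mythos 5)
Your proposal follows essentially the same route as the paper: uniform-in-$\ep$ energy estimates combined with the Moser iteration bound on $\|c_\ep\|_{L^\infty}$, a Bogovskii test to obtain $\de\n_\ep^5+\n_\ep^{\g+1}\in L^1$ uniformly (the paper's Lemma 4.1), extraction of weak/strong limits, the effective-viscous-flux identity obtained by pairing the momentum equation with $\phi\,\p_i\lap^{-1}\n_\ep$ (the paper's Lemma 4.3), the symmetry-of-$Q$ cancellation in $Q\lap Q-\lap Q\,Q$ to handle the weak--weak product (the mechanism of \eqref{k4}), and finally monotonicity plus the renormalized continuity equation (the paper uses the $\n\ln\n$ argument together with the $\n+\beta\eta$ perturbation of the $\n^4$ term) to upgrade weak to strong convergence of $\n_\ep$. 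The only minor imprecision is the claim of an $\ep$-independent $L^4$ bound on $\n_\ep$ directly from \eqref{a26}: that estimate carries an $\ep$ prefactor on the pressure terms, so the uniform $L^5$ (hence $L^4$) bound in fact has to come from the Bogovskii step — which you correctly perform anyway.
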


\begin{proof}  We shall establish  the uniform in $\ep$ estimates on the solutions   $(\n_{\ep}, c_{\ep},\u_{\ep},Q_{\ep})$ obtained in Theorem \ref{t3.1} and then take the limit as $\ep\to 0$.   We remark that the idea of the proof is in the spirit of the arguments for the steady Navier-Stokes equations; see, e.g., \cite{novo1, novo,lw}.
In this section,  the constants $C$ and $C_{1}$ are  generic  and  independent of $\ep.$

Firstly, it follows directly from \eqref{a26} that,  if   $1<\g\le  2,$
 \be\la{s9j}\ba&\|Q_{\ep}\|_{L^{4}}^{4}+ \|\na \u_{\ep} \|_{L^{2}}^{2}
 +\|\na Q_{\ep}\|_{L^{2}}^{2}+\|\lap Q_{\ep}\|_{L^{2}}^{2}+\ep^{2}\|\na \n_{\ep}\|_{L^{2}}^{2}\\
 &\le \|Q_{\ep}\|_{L^{4}}^{4}+ \|\na \u_{\ep} \|_{L^{2}}^{2}
 +\|\na Q_{\ep}\|_{L^{2}}^{2}+\|\lap Q_{\ep}\|_{L^{2}}^{2}+\ep^{2}\left(\|\na \n_{\ep}^{\frac{\g}{2}}\|_{L^{2}}^{2}+\|\na \n_{\ep}^{2}\|_{L^{2}}^{2}\right)\\
 &\le C+C\| \n_{\ep}\|_{L^{\frac{6}{5}}}^{2}+C\| c_{\ep}\|_{L^{\infty}}^{6};\ea\ee
while in the case of $\g>2$, we replace the  artificial pressure $\de \n_{\ep}^{4}$ in \eqref{n1} with $\de \n_{\ep}^{4}+\de \n_{\ep}^{2}$, and repeat  the deduction of   \eqref{a26}  to  conclude that \be\la{s9h}\ba&\|Q_{\ep}\|_{L^{4}}^{4}+ \|\na \u_{\ep} \|_{L^{2}}^{2}
 +\|\na Q_{\ep}\|_{L^{2}}^{2}+\|\lap Q_{\ep}\|_{L^{2}}^{2}+\ep^{2} \left(\|\na \n_{\ep}^{2}\|_{L^{2}}^{2}+\|\na \n_{\ep}^{\frac{\g}{2}}\|_{L^{2}}^{2}+\|\na \n_{\ep}\|_{L^{2}}^{2}\right)\\
 &\le C+C\| \n_{\ep}\|_{L^{\frac{6}{5}}}^{2}+C\| c_{\ep}\|_{L^{\infty}}^{6}.\ea\ee
From  \eqref{s9j} and  \eqref{s9h}  we conclude that,  for all $\g>1$, 
 \be\la{s9}\ba&\|Q_{\ep}\|_{L^{4}}^{4}+ \|\na \u_{\ep} \|_{L^{2}}^{2}
 +\|\na Q_{\ep}\|_{L^{2}}^{2}+\|\lap Q_{\ep}\|_{L^{2}}^{2}+\ep^{2}\|\na \n_{\ep}\|_{L^{2}}^{2}\\
 &\le C\left(1+\| \n_{\ep}\|_{L^{\frac{6}{5}}}^{2}+\| c_{\ep}\|_{L^{\infty}}^{6}\right).\ea\ee
It follows  from   \eqref{0},  \eqref{s9}, and \eqref{3.3} that
 \be\la{s8}\ba 
 \|c_{\ep}\|_{L^{\infty}}&\le C\left(1+\|\u\|_{L^{6}}\right)^{C_{1}}(1+\|g_{1}\|_{L^{\infty}})m_{2}\\
 &\le C\left(1+\| \n_{\ep}\|_{L^{\frac{6}{5}}}+ \| c_{\ep}\|_{L^{\infty}}^{3}\right)^{C_{1}}m_{2}\\
 &\le C+Cm_{2} \| \n_{\ep}\|_{L^{\frac{6}{5}}}^{C_{1}}+ Cm_{2} \| c_{\ep}\|_{L^{\infty}}^{3C_{1}}\\
 &\le 2C+Cm_{2} \| \n_{\ep}\|_{L^{\frac{6}{5}}}^{C_{1}},\ea\ee
 where the last inequality   is valid as long as   $m_{2}$ is chosen sufficiently small.

\begin{lemma}\la{lem4.1} Let    $(\n_{\ep}, c_{\ep},\u_{\ep},Q_{\ep})$ be  a solution    in Theorem \ref{t3.1}.  Then  
 \be\la{b6} \|\n_{\ep}^{5}+\n_{\ep}^{\g+1}\|_{L^{1}}  \le C,\ee
  provided that $m_{2}$ is  sufficiently small.
\end{lemma}

\begin{proof}   Let $\mathcal{B}$  be the {Bogovskii} operator (see Lemma \ref{lem2.2g}).  Multiply    $\eqref{n1}_{3}$   by  $\mathcal{B}(\n_{\ep}-\n_{0})$  to obtain
 \be\la{b5}\ba&\int\left(\de\n_{\ep}^{4}+\n_{\ep}^{\g} \right)\n_{\ep} \\
&= \int \left(\de\n_{\ep}^{4}+\n_{\ep}^{\g}\right)\n_{0}-\int \n_{\ep} g_{2} \cdot \mathcal{B}(\n_{\ep}-\n_{0})\\
&\quad + \ep \int\n_{\ep} \u_{\ep} \cdot \mathcal{B}(\n_{\ep}-\n_{0})+\ep^{2}\int  \na \n_{\ep} \cdot \na \u_{\ep}\mathcal{B}(\n_{\ep}-\n_{0})-\int \n_{\ep} \u_{\ep}\otimes \u_{\ep}:\na\mathcal{B}(\n_{\ep}-\n_{0})\\
&\quad+ \int \mu(\na \u_{\ep}+(\na \u_{\ep})^{\top}):\na \mathcal{B}(\n_{\ep}-\n_{0})+  \lambda \div \u_{\ep}\div \mathcal{B}(\n_{\ep}-\n_{0})\\
&\quad+ \int \left(\frac{1}{2}|\na Q_{\ep}|^{2}\mathbb{I}-\na Q_{\ep} \odot \na Q_{\ep}\right):\na \mathcal{B}(\n_{\ep}-\n_{0})\\
&\quad+ \frac{1}{2}\int tr(Q_{\ep}^{2}) \left(1+\frac{c_{*}}{2}tr(Q_{\ep}^{2}) \right) \div \mathcal{B}(\n_{\ep}-\n_{0})\\
&\quad+ \int \left(Q_{\ep}\lap Q_{\ep}-\lap Q_{\ep} Q_{\ep} \right):\na \mathcal{B}(\n_{\ep}-\n_{0})+ \int \sigma_{*}c_{\ep}^{2}Q_{\ep} :\na \mathcal{B}(\n_{\ep}-\n_{0})  \\
&=: \sum_{i=1}^{10}K_{i}.\ea\ee
Using  $\|\n\|_{L^{1}}=m_{1}$  and interpolation,   one has
\bnn\ba K_{1}+K_{2}
&\le  C+ \frac{1}{16}\int\left(\de\n_{\ep}^{5}+\n_{\ep}^{\g+1} \right) +C \|\n_{\ep}\|_{L^{\frac{6}{5}}} \|\na \mathcal{B}(\n_{\ep}-\n_{0})\|_{L^{2}}\\
&\le C+ \frac{1}{16}\int\left(\de\n_{\ep}^{5}+\n_{\ep}^{\g+1} \right) +C \|\n_{\ep}\|_{L^{\frac{6}{5}}} \| \n_{\ep}-\n_{0} \|_{L^{2}}\\
&\le C+ \frac{1}{8}\int\left(\de\n_{\ep}^{5}+\n_{\ep}^{\g+1} \right).\ea\enn
Thanks to   \eqref{s9} and \eqref{s8},
 \bnn\ba  K_{5}&=-\int \n_{\ep} \u_{\ep}\otimes \u_{\ep}:\na\mathcal{B}(\n_{\ep}-\n_{0})\\
 &\le\|\n_{\ep}\|_{L^{\frac{12}{5}}}\|\u_{\ep}\|_{L^{6}}^{2}\|\na\mathcal{B}(\n_{\ep}-\n_{0})\|_{L^{4}} \\&\le C\|\n_{\ep}\|_{L^{\frac{12}{5}}}\left(1+\|\n_{\ep}\|_{L^{\frac{6}{5}}}^{2}\right)
\|\na\mathcal{B}(\n_{\ep}-\n_{0})\|_{L^{4}}  \\
&\le \frac{\de}{8}\|\n_{\ep}\|_{L^{5}}^{5}+C.\ea\enn
In a similar way,  one deduces
 \bnn\ba  K_{3}+K_{4}+\sum_{i=6}^{9}K_{i}
 &\le  \left(\ep \|\n_{\ep}\|_{L^{2}}+\ep^{2}\|\na \n_{\ep}\|_{L^{2}}\right)\|  \u_{\ep}\|_{H^{1}}\|\mathcal{B}(\n_{\ep}-\n_{0})\|_{L^{\infty}}\\
 &\quad +C \left(\|\na \u_{\ep}\|_{L^{2}}+\|\na Q_{\ep}\|_{L^{4}}^{2}\right)\|\na\mathcal{B}(\n_{\ep}-\n_{0})\|_{L^{2}}\\
 &\quad
 +C\left(1+\|Q_{\ep}\|_{L^{6}}^{4}+\|\lap Q_{\ep}\|_{L^{2}}\|Q_{\ep}\|_{L^{6}}\right)\|\na\mathcal{B}(\n_{\ep}-\n_{0})\|_{L^{3}} \\
&\le C  \|\n_{\ep}\|_{L^{\frac{12}{5}}}^{2}\|\mathcal{B}(\n_{\ep}-\n_{0})\|_{W^{1,4}} \\
&\le \frac{\de}{8}\|\n_{\ep}\|_{L^{5}}^{5}+C.\ea\enn
Finally,  using \eqref{s9}  and \eqref{s8},  one deduces
\bnn\ba K_{10}&= \int \sigma_{*}c_{\ep}^{2}Q_{\ep} :\na \mathcal{B}(\n_{\ep}-\n_{0})\\
&\le \|c_{\ep}\|_{L^{\infty}}^{2}\|Q_{\ep}\|_{L^{6}} \|\na \mathcal{B}(\n_{\ep}-\n_{0})\|_{L^{\frac{6}{5}}}\\
&\le   Cm_{2}(1+\| \n_{\ep}\|_{L^{\frac{6}{5}}})^{C_{1}}\\
 &\le  C+Cm_{2}\| \n_{\ep}\|_{L^{5}}^{C_{1}}. \ea\enn
 Substituting the last three inequalities into \eqref{b5} and taking $m_{2}$ small, we get
 \be\la{b5d} \int\left(\de\n_{\ep}^{5}+\n_{\ep}^{\g+1} \right) \le C+Cm_{2} \| \n_{\ep}\|_{L^{5}}^{C_{1}}\le 2C.\ee
The   proof of Lemma \ref{lem4.1} is completed.
\end{proof}

With \eqref{b6}  obtained, we deduce from \eqref{s9} and \eqref{s8} that
 \be\la{s10}\| c_{\ep}\|_{L^{\infty}}+\|Q_{\ep}\|_{L^{4}}^{4}+ \|\na \u_{\ep} \|_{L^{2}}^{2}
 +\|\na Q_{\ep}\|_{L^{2}}^{2}+\|\lap Q_{\ep}\|_{L^{2}}^{2}+\ep^{2}\|\na \n_{\ep}\|_{L^{2}}^{2}\le C.\ee  
 Then   multiply     $\eqref{n4}_{2}$ firstly by $c_{\ep}$  and then by $-\lap c_{\ep}$ to deduce
\be\la{a26af}  \|\na c_{\ep}\|_{L^{2}} +\|\lap c_{\ep}\|_{L^{2}}  \le C.\ee
As a result of   \eqref{b6},  \eqref{s10},  and  \eqref{a26af}   we  can take $\ep$-limit of   $(\n_{\ep}, c_{\ep},\u_{\ep},Q_{\ep})$  subject to  some subsequence  so that, as $\ep\to 0$, 
\be\la{b12}  \n_{\ep} \rightharpoonup \n \,\,{\rm in}\,\, L^{5}\cap L^{\g+1},  \ee
\be\la{b10} (\na \u_{\ep},\,\na^{2} Q_{\ep},\na c_{\ep})\rightharpoonup  (\na \u,\,\na^{2} Q,\,\na c)\quad {\rm in}\,\,\,\, L^{2},\ee
\be\la{b11}  \u_{\ep}  \rightarrow   \u, \quad (Q_{\ep},\,c_{\ep}) \rightarrow    (Q,\,c)\,\,\,\,{\rm in}\quad W^{1,p}\,\,\,(1\le p<6), \ee
\be\la{b15}   \langle \na \u_{\ep}\rangle \rightarrow \na \u,\,\,\,\langle g_{3}\rangle \rightarrow g_{3}\quad{\rm in}\,\,\,\, L^{2},\quad \ee
\be\la{b15b}   \ep  \n_{\ep} \rightarrow 0,\,\,\,\,\,   \ep  \n_{\ep} \u_{\ep}\rightarrow 0,\,\,\,\,\, \ep^{2} \na \n_{\ep}\na \u_{\ep}\rightarrow 0,\,\,\,\,\ep^{2}  \na \n_{\ep} \rightarrow 0\,\,\,\,  {\rm in}\quad  L^{1}.\ee
and moreover,  it follows from  \eqref{b12} and  \eqref{b11}   that
\be\la{b14}\n_{\ep}^{4} \rightharpoonup \overline{\n^{4}}\quad {\rm in}\,\,\, L^{\frac{5}{4}},\quad \n_{\ep}^{\g} \rightharpoonup \overline{\n^{\g}}\,\,\, {\rm in}\,\,\, L^{\frac{\g+1}{\g}},\quad \n_{\ep}  \u_{\ep} \rightharpoonup  \n \u\,\,\,  {\rm in}\,\,\,  L^{2}, \ee
where and hereafter the weak limit of a function $f$ is denoted by $\overline{f}.$ 
Therefore,      with \eqref{b12}-\eqref{b14}  in hand, we are able  to   pass  the limit  as $\ep\to 0$ and obtain  the  following   equations in the weak sense:
 \begin{equation}\label{n6f}
\left\{\ba&\div (\n \u)=0,\\
& \u\cdot \na c-\lap c=g_{1}, \\
&\div(\n \u\otimes \u)+\na \left(\de \overline{\n^{4}} + \overline{\n^{\g}} \right)
 -\div\left(\mathbb{S}_{ns}+ \mathbb{S}_{1}+\mathbb{S}_{2} \right)=\n  g_{2},\\
& \u\cdot \na Q+Q\o-\o Q +c_{*}Qtr(Q^{2})+\frac{(c-c_{*})}{2}Q-b\left(Q^{2}-\frac{1}{3}tr(Q^{2})
 \mathbb{I}\right)-\lap Q=g_{3}.\ea \right.
\end{equation}  
In addition,   \eqref{e7b} and \eqref{e7c} follow  from \eqref{a19}, \eqref{a19a}, \eqref{b11}, and \eqref{4.20c}    below.  The   next  lemma shows that  $(\n,\u)$  is a    renormalized solution to $\eqref{n6f}_{1}$.

 \begin{lemma}\la{lem4.2} 
 Assume that  $(\n,\u)$ is  a weak solution to $\eqref{n6}_{1}$,  $\n\in L^{2}(\mathcal{O}) $ and $ \u\in  H^{1}_{0}(\mathcal{O},\r)$.  If  we extend   $(\n,\u)$   by zero outside $\mathcal{O},$  we have     \be\la{wq} {\rm div}(b(\n) \u) +(b'(\n)\n-b(\n))\div \u=0\quad {\rm in}\quad \mathcal{D}'(\mathbb{R}^{3}),\ee
where $b(z)=z$, or  $b\in C^{1}([0,\infty))$ with $b'(z)=0$ for large $z$.
  \end{lemma}
\begin{proof}The detailed proof is available in \cite[Lemma 2.1]{novo1}. 
\end{proof}

In order to   complete  the proof of  Theorem \ref{t4.1},  we need to verify  \be\la{b21} \overline{\n^{4}}=\n^{4},\quad \overline{\n^{\g}}=\n^{\g}.\ee
To this end,  let us define   \bnn\ba C^{1}([0,\infty))\ni b_{n}(\n) =\left\{\ba
&\n\ln (\n+\frac{1}{n}),\quad \quad \n\le n;\\
&(n+1)\ln (n+1+\frac{1}{n}),\,\,\n\ge n+1.\\
\ea \right.
\ea\enn
We see that   $b_{n}(\n)\rightarrow \n\ln\n$ a.e.    because of the fact: $\n  \in L^{1}$.   Select  $b_{n}$ in  \eqref{wq} and  send  $n\rightarrow \infty$   to  obtain
\bnn  {\rm div}(\u\n\ln \n)+\n \div \u =0\quad {\rm in}\quad \mathcal{D}'(\mathbb{R}^{3}).\enn This implies
\be\la{4.8}  \int  \n \div \u=0.
 \ee
On the other hand,   multiplying $\eqref{n1}_{1}$  by  $b_{n}'(\n_{\ep})$ gives
\be\la{4.9}\ba \int (b_{n}'(\n_{\ep})\n_{\ep}-b_{n}(\n_{\ep})) \div \u_{\ep}& =\ep
\int  \n_{0}b_{n}'(\n_{\ep})-\ep
\int  \n_{\ep} b_{n}'(\n_{\ep}) -\ep^{4}\int b_{n}''(\n_{\ep}) |\na  \n_{\ep}|^{2}\\
  &\le \ep\int  \n_{0}b_{n}'(\n_{\ep})-\ep \int  \n_{\ep}b_{n}'(\n_{\ep}). \ea \ee
Recalling    \eqref{b6} and the definition of $b_{n}$,   one deduces that
\bnn\ba& \lim_{n\rightarrow\infty} \int  \n_{0}b_{n}'(\n_{\ep})\\
&=\lim_{n\rightarrow\infty}\left( \int_{\{\n_{\ep}\le n\}}\n_{0}b_{n}'(\n_{\ep})+\int_{\{\n_{\ep}>n\}}\n_{0}b_{n}'(\n_{\ep}) \right)\\
&\le \lim_{n\rightarrow\infty} \int_{\{\n_{\ep}\le n\}} \n_{0}\left(\ln (\n_{\ep}+\frac{1}{n}) +\frac{\n_{\ep}}{\n_{\ep}+\frac{1}{n}}\right)+C\lim_{n\rightarrow\infty}{\rm meas}\,|\{x;\,\n_{\ep}\ge n\}|\\
&\le  \lim_{n\rightarrow\infty}\int_{\{1/2\le \n_{\ep}\le n\}} \n_{0} \ln (\n_{\ep}+\frac{1}{n})+\lim_{n\rightarrow\infty}\int \frac{\n_{0} \n_{\ep}}{\n_{\ep}+\frac{1}{n}} \\
&\le C.\ea\enn
 Similarly,  \bnn \lim_{n\rightarrow\infty} \int  \n_{\ep}b_{n}'(\n_{\ep})\le C.\enn   Therefore,  taking  sequentially $n\rightarrow\infty$ and   $\ep\rightarrow0$   in \eqref{4.9}, using   \eqref{4.8},
  \be\la{4.10}\int \overline{\n \div u} = \lim_{\ep\rightarrow0} \int  \n_{\ep} \div u_{\ep}\le 0=\int  \n \div u.\ee

Now  define   the following effective viscous flux:
\be\la{s13}\mathbb{F}_{\ep} =\de \n^{4}_{\ep}+\n_{\ep}^{\g}  -(2\mu+\lambda )\div \u_{\ep}\quad {\rm and}\quad   \overline{\mathbb{F}}=\de\overline{\n^{4}}+\overline{\n^{\g}}  -(2\mu+\lambda )\div \u.\ee

\begin{lemma}\la{lem4.3} Under the assumptions    in  Theorem \ref{t4.1}, the following property holds:
\be\la{4.6}\ba  \lim_{\ep\rightarrow0}\int \phi   \n_{\ep} \mathbb{F}_{\ep}=\int \phi \n \overline{\mathbb{F}},\quad \forall\,\,\,\phi\in C_{0}^{\infty}(\mathcal{O}).\ea \ee
\end{lemma}

Let us   continue to prove \eqref{b21}  with the  aid of \eqref{4.6}.  The proof  of Lemma \ref{lem4.3} is postponed to the end of this section.

  Sending   $ \phi \rightarrow 1$ in \eqref{4.6}, using \eqref{4.10} and \eqref{s13},  we get
 \be\la{4.11}\ba& \lim_{\ep\rightarrow0}\int \left(\de \n^{5}_{\ep}+
 \n_{\ep}^{\g+1}  \right)\le \int \n \left(\de\overline{\n^{4}}+\overline{\n^{\g}}  \right).\ea \ee
According to  \eqref{4.11},    we have
 \bnn\ba   \int \left(\de\overline{\n^{5}}+ \overline{\n^{\g+1}} \right) =\lim_{\ep\rightarrow0}\int \left(\de \n^{5}_{\ep}+
 \n_{\ep}^{\g+1}  \right) \le \int \n \left(\de\overline{\n^{4}}+\overline{\n^{\g} }\right),\ea \enn
which implies  \be\la{4.12a}\ba &\int  \de\left(\n\overline{\n^{4}}-  \overline{\n^{5}}\right)\ge \int \left(\overline{\n^{\g+1}}-\n\overline{\n^{\g}} \right) \ge   0,\ea \ee
where the last inequality is  due  to the  convexity.
Next,  for  given   constant $\beta>0$ and  $\eta\in C^{\infty}(\mathcal{O}),$
\bnn\ba
  0&\le \int  \left(\n_{\ep}^{4}-(\n+\beta\eta)^{4}\right)(\n_{\ep}-(\n+\beta\eta))\\
 &= \int  \left(\n_{\ep}^{5}-\n_{\ep}^{4}\n -\n_{\ep}^{4}\beta\eta-(\n+\beta\eta)^{4}\n_{\ep}+(\n+\beta\eta)^{5}\right).\ea \enn
By   \eqref{4.12a},  sending  $\ep\rightarrow0$ yields
\bnn\ba 0&\le \int \left(\overline{\n^{5}}-\n \overline{\n^{4}}-\overline{\n^{4}} \beta\eta+(\n+\beta\eta)^{4} \beta\eta\right)\le \int  \left(-\overline{\n^{4}} +(\n+\beta\eta)^{4} \right)\beta\eta.\ea\enn
Replacing  $-\beta$ with $\beta$ in  the  argument above,  and then sending   $\beta\rightarrow0,$  we get
      \bnn\ba   \int \left(\n^{4}  -\overline{\n^{4}}\right) \eta=0,\ea \enn
which  implies  $\overline{\n^{4}} =\n ^{4}$, and thus   $\n_{\ep}\rightarrow \n$  a.e.  in $\mathcal{O}$  due to the arbitrariness of   $\eta$, and hence for all  $s\in [1,5)$, from  \eqref{b12},     
\be\la{4.20c}   \n_{\ep}\rightarrow \n\quad  {\rm in}\quad L^{s}.\ee
As a result of     \eqref{4.20c} and \eqref{b12},    we obtain \eqref{b21} and thus complete the proof of  Theorem \ref{t4.1}. \end{proof}

It remains  to    prove Lemma \ref{lem4.3}.  

 \noindent {{\it Proof of Lemma \ref{lem4.3}.}}   Let  $\lap^{-1}(h)=K*h$ be  the convolution of $h$ with the fundamental solution $K$ of Laplacian in $\r$.
For convenience, we write $\eqref{n1}_{3}$ equivalently as
\be\la{n1f}\ba &\ep \n_{\ep}\u_{\ep}^{i}+\p_{j}(\n_{\ep} \u_{\ep}^{j}  \u_{\ep}^{i})+\p_{i} \mathbb{F}_{\ep}+\ep^{2} \na \n_{\ep}\cdot \na \u_{\ep}^{i}\\
&=  \n_{\ep} g_{2}^{i}+\mu\lap \u_{\ep}^{i}   \\
&\quad -\p_{j}\left(\p_{j}Q_{\ep} \p_{i}Q_{\ep}\right)+\frac{1}{2}\p_{i}|\na Q_{\ep}|^{2}+\frac{1}{2}\p_{i}\left(tr(Q_{\ep}^{2})(1+\frac{c_{_{*}}}{2}tr(Q_{\ep}^{2}))\right) \\ &\quad+\p_{j}\left(Q_{\ep}^{ik} \lap Q_{\ep}^{kj} -\lap Q_{\ep}^{ik} Q_{\ep}^{kj}
+\si_{*}c_{\ep}^{2}Q_{\ep}^{ij}\right),\quad i=1,2,3,\ea\ee
where   the Einstein summation is used on $k,j$, and $\mathbb{F}_{\ep}$  is  taken from  \eqref{s13}.

Making zero extension of  $\n_{\ep}$ to the whole space $\r$,
multiplying    $\eqref{n1f}$ by $\phi\p_{i}\lap^{-1}(\n_{\ep})$ with $\phi\in C_{0}^{\infty}(\mathcal{O})$,
we deduce
 \be\la{4.2}\ba  &\int  \phi \n_{\ep}\mathbb{F}_{\ep}\\
&=-\int \p_{i}\lap^{-1}(\n_{\ep}) \p_{i}\phi \left(\de\n_{\ep}^{4}+\n_{\ep}^{\g} -(\mu+\lambda )\div \u_{\ep}\right)-\int \n_{\ep} g_{2}^{i}\phi\p_{i}\lap^{-1}(\n_{\ep})\\
&\quad+\mu\int \left( \p_{j}\u^{i}_{\ep}\p_{i}\lap^{-1}(\n_{\ep}) \p_{j}\phi- \u^{i}_{\ep}\p_{j}\p_{i}\lap^{-1}(\n_{\ep})\p_{j}\phi+  \n_{\ep} \u_{\ep} \cdot\na \phi\right)\\
&\quad-\int \n_{\ep}\u_{\ep}^{j}\u_{\ep}^{i}\p_{j}\phi \p_{i}\lap^{-1}(\n_{\ep})-\int \n_{\ep}\u_{\ep}^{j}\u_{\ep}^{i}\phi\p_{j} \p_{i}\lap^{-1}(\n_{\ep})\\
&\quad + \int\left(\frac{1}{2}\int |\na Q_{\ep}|^{2}+\frac{1}{2}  tr(Q_{\ep}^{2})(1+\frac{c_{_{*}}}{2}tr(Q_{\ep}^{2}))\right)\left(\p_{i}\phi \p_{i}\lap^{-1}(\n_{\ep})+\phi \n_{\ep} \right)\\
&\quad -\int \p_{j}Q_{\ep} \p_{i}Q_{\ep} \left(\p_{j}\phi \p_{i}\lap^{-1}(\n_{\ep})+\phi\p_{j} \p_{i}\lap^{-1}(\n_{\ep})\right)\\
&\quad +\int \left(Q_{\ep}^{ik} \lap Q_{\ep}^{kj} -\lap Q_{\ep}^{ik} Q_{\ep}^{kj}
+\si_{*}c_{\ep}^{2}Q_{\ep}^{ij}\right)\left(\p_{j}\phi \p_{i}\lap^{-1}(\n_{\ep})+\phi \p_{j}  \p_{i}\lap^{-1}(\n_{\ep}) \right)\\
&\quad+\ep \int \n_{\ep}\u_{\ep}^{i} \phi  \p_{i}\lap^{-1}(\n_{\ep}) +\ep^{2}\int  \na \n_{\ep}\cdot \na \u_{\ep}^{i}\phi\p_{i}\lap^{-1}(\n_{\ep}),
\ea\ee
where the second line on the right-hand side  is due to
\bnn \ba
& \int  \p_{j}\u^{i}_{\ep}\left(\p_{i}\lap^{-1}(\n_{\ep})\p_{j}\phi+\p_{j}\p_{i}\lap^{-1}(\n_{\ep})\phi\right)\\
&= \int \left( \p_{j}\u^{i}_{\ep}\p_{i}\lap^{-1}(\n_{\ep}) \p_{j}\phi- \u^{i}_{\ep}\p_{j}\p_{i}\lap^{-1}(\n_{\ep})\p_{j}\phi+  \n_{\ep} \u_{\ep} \cdot\na \phi\right)+ \int \n_{\ep} \div  \u_{\ep}\phi.\ea\enn
Making use of
\bnn \ep(\n_{\ep}-\n_{0})+\div (\n_{\ep}\u_{\ep})=\ep^{2}\div (\textbf{1}_{\mathcal{O}}\na \n_{\ep})\quad {\rm in}\quad \r,\enn
 we write the third line on the right-hand side of \eqref{4.2} as
\be\la{4.2q}\ba
&-\int  \n_{\ep}\u_{\ep}^{j}\u_{\ep}^{i}\p_{j}\phi \p_{i}\lap^{-1}(\n_{\ep})-\int \n_{\ep}\u_{\ep}^{j}\u_{\ep}^{i}\phi\p_{j} \p_{i}\lap^{-1}(\n_{\ep})\\
&=-\int \n_{\ep}\u_{\ep}^{j}\u_{\ep}^{i}\p_{j}\phi \p_{i}\lap^{-1}(\n_{\ep})
 +\int \u_{\ep}^{i}\phi \left[\n_{\ep}\p_{i}\p_{j}\lap^{-1}(\n_{\ep}\u^{j}_{\ep})-\n_{\ep}\u_{\ep}^{j} \p_{j}\p_{i}\lap^{-1}(\n_{\ep}) \right]\\
 &\quad-\int \n_{\ep}\u^{i}_{\ep}\phi\p_{i}\p_{j}\lap^{-1}(\n_{\ep}\u_{\ep}^{j} )\\
&=-\int  \n_{\ep}\u_{\ep}^{j}\u_{\ep}^{i}\p_{j}\phi \p_{i}\lap^{-1}(\n_{\ep}) +\int \u_{\ep}^{i}\phi \left[\n_{\ep}\p_{i}\p_{j}\lap^{-1}(\n_{\ep}\u^{j}_{\ep})-\n_{\ep}\u_{\ep}^{j} \p_{j}\p_{i}\lap^{-1}(\n_{\ep}) \right]\\
&\quad-\ep^{2}\int  \n_{\ep}\u^{i}_{\ep}\phi \p_{i}\lap^{-1}(\div(\textbf{1}_{\mathcal{O}} \na\n_{\ep}))+\ep\int \n_{\ep}\u^{i}_{\ep}\phi \p_{i}\lap^{-1}(\n_{\ep}-\n_{0}).\ea\ee
Substituting  \eqref{4.2q} into   \eqref{4.2} gives us
\be\la{4.3}\ba  &\int\phi \n_{\ep}\mathbb{F}_{\ep}  \\
&=-\int \p_{i}\lap^{-1}(\n_{\ep}) \p_{i}\phi \left(\de\n_{\ep}^{4}+\n_{\ep}^{\g} -(\mu+\lambda )\div \u_{\ep}\right)-\int \n_{\ep} g_{2}^{i}\phi\p_{i}\lap^{-1}(\n_{\ep})\\
&\quad+\mu\int \left( \p_{j}\u^{i}_{\ep}\p_{i}\lap^{-1}(\n_{\ep}) \p_{j}\phi- \u^{i}_{\ep}\p_{j}\p_{i}\lap^{-1}(\n_{\ep})\p_{j}\phi+  \n_{\ep} \u_{\ep} \cdot\na \phi\right)\\
&\quad-\int \n_{\ep}\u_{\ep}^{j}\u_{\ep}^{i}\p_{j}\phi \p_{i}\lap^{-1}(\n_{\ep})+\int \u_{\ep}^{i}\phi \left[\n_{\ep}\p_{i}\p_{j}\lap^{-1}(\n_{\ep}\u^{j}_{\ep})-\n_{\ep}\u_{\ep}^{j} \p_{j}\p_{i}\lap^{-1}(\n_{\ep}) \right]\\
&\quad + \int\left(\frac{1}{2}\int |\na Q_{\ep}|^{2}+\frac{1}{2}  tr(Q_{\ep}^{2})(1+\frac{c_{_{*}}}{2}tr(Q_{\ep}^{2}))\right)\left(\p_{i}\phi \p_{i}\lap^{-1}(\n_{\ep})+\phi \n_{\ep} \right)\\
&\quad -\int \p_{j}Q_{\ep} \p_{i}Q_{\ep} \left(\p_{j}\phi \p_{i}\lap^{-1}(\n_{\ep})+\phi\p_{j} \p_{i}\lap^{-1}(\n_{\ep})\right)\\
&\quad +\int \left(Q_{\ep}^{ik} \lap Q_{\ep}^{kj} -\lap Q_{\ep}^{ik} Q_{\ep}^{kj}
 \right)\left(\p_{j}\phi \p_{i}\lap^{-1}(\n_{\ep})+\phi \p_{j} \p_{i}\lap^{-1}(\n_{\ep}) \right)\\
 &\quad+ \int  \si_{*}c_{\ep}^{2}Q_{\ep}^{ij}\left(\p_{j}\phi \p_{i}\lap^{-1}(\n_{\ep})+\phi \p_{j} \p_{i}\lap^{-1}(\n_{\ep}) \right)\\
&\quad-\ep^{2}\int   \n_{\ep}\u^{i}_{\ep}\phi \p_{i}\lap^{-1}(\div(\textbf{1}_{\mathcal{O}} \na\n_{\ep}))- \na \n_{\ep}\cdot \na \u_{\ep}^{i}\phi\p_{i}\lap^{-1}(\n_{\ep})\\
&\quad+\ep \int  \n_{\ep}\u^{i}_{\ep}\phi \p_{i}\lap^{-1}(2\n_{\ep}-\n_{0})\\
&=: \sum_{n=1}^{11}T_{n}^{\ep},\ea\ee
where $T_{n}^{\ep}$ denotes the $n^{th}$ integral  on the right hand side of \eqref{4.3}.

On the other  hand,  if    we    multiply  $\eqref{n6f}_{2}$  by  $\phi\p_{i}\lap^{-1}(\n)$, we obtain
 \be\la{4.5}\ba   \int\phi \n \mathbb{F} &=-\int \p_{i}\lap^{-1}(\n) \p_{i}\phi \left(\de\overline{\n^{4}}+\overline{\n^{\g}} -(\mu+\lambda )\div \u\right)-\int \n g_{2}^{i}\phi\p_{i}\lap^{-1}(\n)\\
&\quad+\mu\int \left( \p_{j}\u^{i}\p_{i}\lap^{-1}(\n) \p_{j}\phi- \u^{i}\p_{j}\p_{i}\lap^{-1}(\n)\p_{j}\phi+  \n \u \cdot\na \phi\right)\\
&\quad-\int \n\u^{j}\u^{i}\p_{j}\phi \p_{i}\lap^{-1}(\n)+\int \u^{i}\phi \left[\n\p_{i}\p_{j}\lap^{-1}(\n\u^{j})-\n\u^{j} \p_{j}\p_{i}\lap^{-1}(\n) \right]\\
&\quad + \int\left(\frac{1}{2}\int |\na Q|^{2}+\frac{1}{2}  tr(Q^{2})(1+\frac{c_{_{*}}}{2}tr(Q^{2}))\right)\left(\p_{i}\phi \p_{i}\lap^{-1}(\n)+\phi \n \right)\\
&\quad -\int \p_{j}Q \p_{i}Q\left(\p_{j}\phi \p_{i}\lap^{-1}(\n)+\phi\p_{j} \p_{i}\lap^{-1}(\n)\right)\\
&\quad +\int \left(Q^{ik} \lap Q^{kj} -\lap Q^{ik} Q^{kj}
 \right)\left(\p_{j}\phi \p_{i}\lap^{-1}(\n)+\phi \p_{j} \p_{i}\lap^{-1}(\n) \right)\\
 &\quad+ \int  \si_{*}c^{2}Q^{ij}\left(\p_{j}\phi \p_{i}\lap^{-1}(\n)+\phi \p_{j} \p_{i}\lap^{-1}(\n) \right)\\
&=: \sum_{n=1}^{9}T_{n}. \ea\ee
In terms of \eqref{4.3} and \eqref{4.5},  to  prove \eqref{4.6}  it suffices to check
\bnn\lim_{\ep\rightarrow0} T_{n}^{\ep}=T_{n}\,\,(n=1, 2,  \cdots, 9)\quad {\rm and }\quad\lim_{\ep\rightarrow0} T_{n}^{\ep}=0\,\,(n=10,11).\enn
In fact,  by   the Mikhlin multiplier theory (cf. \cite{stein}), and the Rellich-Kondrachov compactness  theorem (cf. \cite{evans}), 
one has  \be\la{e2}  \p_{j}\p_{i}\lap^{-1}(h_{\ep})  \rightharpoonup\p_{j}\p_{i}\lap^{-1}(h)\quad {\rm in }\,\, L^{p},\quad  \p_{i}\lap^{-1}(h_{\ep})  \rightarrow \p_{i}\lap^{-1}(h)\quad {\rm in }\,\, L^{q}, \ee where $q<(1/p-1/3)^{-1}$ if $p<3$ and $q\le \infty$ if $p>3.$ 
By \eqref{e2}, as well as   \eqref{b12} and \eqref{b11},   we  have
\be\la{k4}\ba T^{\ep}_{8}
 &= \int \left(Q_{\ep}^{ik} \lap Q_{\ep}^{kj} -\lap Q_{\ep}^{ik} Q_{\ep}^{kj}
 \right)\left(\p_{j}\phi \p_{i}\lap^{-1}(\n_{\ep})+\phi \p_{j} \p_{i}\lap^{-1}(\n_{\ep}) \right)\\
 &=\int \left(\na Q_{\ep}^{ik} \na Q_{\ep}^{kj} -\na Q_{\ep}^{ik}\na  Q_{\ep}^{kj}
 \right) \left(\p_{j}\phi \p_{i}\lap^{-1}(\n_{\ep})+\phi \p_{j} \p_{i}\lap^{-1}(\n_{\ep}) \right)\\
 &\quad +\int \left(  Q_{\ep}^{ik} \na Q_{\ep}^{kj} -\na Q_{\ep}^{ik} Q_{\ep}^{kj}
 \right)\na\left(\p_{j}\phi \p_{i}\lap^{-1}(\n_{\ep})+\phi \p_{j} \p_{i}\lap^{-1}(\n_{\ep}) \right)\\
  &=\int \left(\na Q_{\ep}^{ik} \na Q_{\ep}^{kj} -\na Q_{\ep}^{ik}\na  Q_{\ep}^{kj}
 \right) \left(\p_{i}\phi \p_{i}\lap^{-1}(\n_{\ep})+\phi \n_{\ep} \right)\\
 &\rightarrow \int \left(\na Q^{ik} \na Q^{kj} -\na Q^{ik} \na Q^{kj}
 \right) \left(\p_{i}\phi \p_{i}\lap^{-1}(\n)+\phi \n \right)\\
 &=\int \left(Q  \lap Q  -\lap Q  Q
 \right)\left(\p_{i}\phi \p_{i}\lap^{-1}(\n)+\phi \n  \right)\\
 &=T_{8},\ea\ee  where  the third equality  is valid after summing up $i,j=1,2,3, $ due to the fact that the matrix  $Q$ is  symmetric and the following computation:
 \bnn \ba &\int \left(  Q_{\ep}^{ik} \na Q_{\ep}^{kj} -\na Q_{\ep}^{ik} Q_{\ep}^{kj}
 \right)\na \left(\p_{j}\phi \p_{i}\lap^{-1}(\n_{\ep})+\phi \p_{j} \p_{i}\lap^{-1}(\n_{\ep}) \right)\\
 & +\int \left(  Q_{\ep}^{jk} \na Q_{\ep}^{ki} -\na Q_{\ep}^{jk} Q_{\ep}^{ki}
 \right)\na \left(\p_{i}\phi \p_{j}\lap^{-1}(\n_{\ep})+\phi \p_{i} \p_{j}\lap^{-1}(\n_{\ep}) \right)\\
 =& \int \left(  Q_{\ep}^{ik} \na Q_{\ep}^{kj} -\na Q_{\ep}^{jk} Q_{\ep}^{ki}
 \right)\na \left(\p_{j}\phi \p_{i}\lap^{-1}(\n_{\ep})+\phi \p_{j} \p_{i}\lap^{-1}(\n_{\ep}) \right)\\
 & +\int \left(  Q_{\ep}^{jk} \na Q_{\ep}^{ki} -\na Q_{\ep}^{ik} Q_{\ep}^{kj}
 \right)\na \left(\p_{i}\phi \p_{j}\lap^{-1}(\n_{\ep})+\phi \p_{i} \p_{j}\lap^{-1}(\n_{\ep}) \right)\\
 &
 = 0.\ea\enn
Next,  utilizing \eqref{b6}, \eqref{b12}-\eqref{b14},   and \eqref{e2} again,     we deduce   that   
\bnn \lim_{\ep\rightarrow0} T^{\ep}_{n}=T_{n},\quad {\rm for}\,\,\,i=1,2,3,4,6,7,9,\enn
   and    \bnn \lim_{\ep\rightarrow0} T^{\ep}_{n}=T_{n},\quad {\rm for}\,\,\,i=10,11.\enn
In order to justify  \be\la{s15}\lim_{\ep\rightarrow0} T^{\ep}_{5}=T_{5}, \ee  we present the    following Lemma (cf. \cite{fei5}):
\begin{lemma}[div-curl]\la{lem4.4}  Let $\frac{1}{r_{1}}+\frac{1}{r_{2}}=\frac{1}{r}$ and $1\le r,r_{1},r_{2}<\infty.$
 Suppose that
\bnn v_{\ep}\rightharpoonup v\,\,{\rm in}\,\,L^{r_{1}}\quad {\rm and}\quad w_{\ep}\rightharpoonup w\,\,{\rm in}\,\,L^{r_{2}}.\enn
Then,  \bnn v_{\ep}\p_{i}\p_{j}\lap^{-1}(w_{\ep})-w_{\ep}\p_{i}\p_{j}\lap^{-1}(v_{\ep})\rightharpoonup v\p_{i}\p_{j}\lap^{-1}(w)-w\p_{i}\p_{j}\lap^{-1}(v)\quad{\rm in}\,\,L^{r},\,\,\,(i,j=1,2,3).\enn
\end{lemma}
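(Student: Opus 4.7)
The plan is to exploit the formal self-adjointness of $R:=\p_i\p_j\lap^{-1}$, combined with a classical commutator gain, and deduce the weak $L^r$ convergence by duality.

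First I would reformulate the statement via duality. Since $R$ is formally self-adjoint (two integrations by parts together with the symmetry of the Newton kernel $K$ give $\int (Rf)g\,dx = \int f(Rg)\,dx$ on suitable $L^p$ pairs), for every test function $\phi\in C_c^\infty(\r)$ one obtains
\begin{equation*}
\int \phi\bigl(v_\ep R w_\ep - w_\ep R v_\ep\bigr)\,dx
= \int v_\ep\,[\phi,R]w_\ep\,dx,
\qquad [\phi,R]h := \phi Rh - R(\phi h).
\end{equation*}

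Next I would invoke the commutator gain. The operator $R$ is a zero-order Calder\'on--Zygmund operator with Fourier symbol $-\xi_i\xi_j/|\xi|^2$; for a smooth compactly supported $\phi$, the Lipschitz bound $|\phi(x)-\phi(y)|\le C|x-y|$ cancels one power of the $|x-y|^{-3}$ singularity of the kernel, producing a kernel of order $-2$. Standard Calder\'on-type arguments (or the Coifman--Rochberg--Weiss framework) then yield the boundedness $[\phi,R]:L^p(\r)\to W^{1,p}(\r)$ for every $1<p<\infty$. Consequently $\{[\phi,R]w_\ep\}$ is bounded in $W^{1,r_2}(\r)$ and essentially supported in a fixed compact set, so the Rellich--Kondrachov theorem produces a subsequence converging strongly in $L^q$ for every $q<r_2^\ast$, where $r_2^\ast$ is the Sobolev critical exponent of $W^{1,r_2}$. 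A direct index check shows that the condition $r_1'\le r_2^\ast$ reduces to $1/r\le 4/3$, which is guaranteed by $r\ge 1$; thus strong convergence holds in the dual exponent $L^{r_1'}$.

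Finally, combining $v_\ep\rightharpoonup v$ in $L^{r_1}$ with the strong convergence from the previous step gives
\begin{equation*}
\int v_\ep\,[\phi,R]w_\ep\,dx
\longrightarrow \int v\,[\phi,R]w\,dx
= \int \phi\bigl(v R w - w R v\bigr)\,dx,
\end{equation*}
and density of $C_c^\infty$ in $L^{r'}$ promotes this to the claimed weak convergence in $L^r$.

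The main obstacle is the commutator gain in the second step: the passage from the zero-order $L^p$-bound on $R$ to the order $-1$ smoothing of $[\phi,R]$ relies on the kernel cancellation coming from the smoothness of $\phi$ and a careful Calder\'on--Zygmund analysis. A cleaner route, used in Feireisl's treatment of compressible flows, is to work directly with the Newton potential $K(x)\propto 1/|x|$: split $[\phi,R]$ into a near-diagonal piece (controlled by Hardy--Littlewood--Sobolev) and a far-field piece (already smooth), and apply Rellich compactness in each piece separately.
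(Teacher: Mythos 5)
The paper does not prove this lemma; it is quoted from the reference \cite{fei5} (Feireisl--Novotn\'{y}), where it is established via the div-curl compensated-compactness machinery: one writes $R_{ij}w_\ep=\p_i\p_j\lap^{-1}w_\ep$, constructs auxiliary vector fields from $\nabla\lap^{-1}w_\ep$ and $\nabla\lap^{-1}v_\ep$ (one curl-free, the other with vanishing divergence), and invokes the Murat--Tartar div-curl lemma to pass to the limit in the bilinear expression. Your commutator/duality proof is a genuinely different and equally legitimate route, closer to the Coifman--Lions--Meyer--Semmes circle of ideas: the self-adjointness of $R$ concentrates the entire bilinear structure into the single commutator $[\phi,R]w_\ep$, the Calder\'on-type estimate $[\phi,R]:L^p(\r)\to W^{1,p}(\r)$ supplies the extra derivative, and Rellich--Kondrachov compactness converts the weak $L^{r_2}$ convergence of $w_\ep$ into strong convergence of $[\phi,R]w_\ep$, which pairs with the weak convergence of $v_\ep$. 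The index arithmetic $r_1'<r_2^\ast\Longleftrightarrow 1/r<4/3$ is correct and always satisfied. What the div-curl route buys is that it avoids invoking the (nontrivial) commutator smoothing estimate; what your route buys is a one-line reduction to a single operator acting on a single sequence, which is conceptually cleaner.

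Two points deserve more care. First, the claim that $\{[\phi,R]w_\ep\}$ is ``essentially supported in a fixed compact set'' is false as written: $R$ is nonlocal, so $R(\phi w_\ep)$ is never compactly supported. The correct statement is that for $x$ outside a neighbourhood of $\mathrm{supp}\,\phi$ one has $|R(\phi w_\ep)(x)|\le C|x|^{-3}\|\phi w_\ep\|_{L^1}$, a decay estimate uniform in $\ep$; combined with the $W^{1,r_2}$ bound and Rellich--Kondrachov on a large ball this still yields strong $L^{r_1'}$ convergence, but the far-field contribution must be estimated explicitly rather than dismissed. (In the paper's application $v_\ep=\n_\ep\u_\ep^j$ has fixed compact support, so only local compactness is in fact required.) Second, the final density argument upgrading $\mathcal{D}'$-convergence to weak $L^r$-convergence uses density of $C_c^\infty$ in $L^{r'}$, which requires $r>1$; the hypotheses force $r_1,r_2>1$ but still permit $r=1$ (e.g.\ $r_1=r_2=2$), a boundary case where $L^\infty$ has no dense smooth subspace and the sequence $v_\ep Rw_\ep-w_\ep Rv_\ep$, though bounded in $L^1$, need not possess a weak $L^1$ limit without an additional uniform-integrability argument. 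The reference states the lemma with the strict inequality $1/r_1+1/r_2<1$, i.e.\ $r>1$, and the application in the paper indeed has $r>1$, so this is a statement-level imprecision rather than a flaw in your argument.
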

Taking    $v_{\ep}=\n_{\ep}\u_{\ep}^{j}$ and $w_{\ep}=\n_{\ep}$,   we obtain  \eqref{s15} by Lemma \ref{lem4.4}.   The proof of Lemma \ref{lem4.3} is thus completed.

\bigskip

 \section{Vanishing Artificial Pressure}
 
In this   section, we will complete the proof of Theorem \ref{t} by  taking the limit as  $\de\rightarrow 0$ in the solutions
$(\n_{\de},c_{\de},\u_{\de},Q_{\de})$  obtained in Theorem \ref{t4.1}.

\subsection{Refined estimates on energy function}
We first derive  the refined   estimates on  $(\n_{\de},c_{\de},\u_{\de},Q_{\de})$   uniform in $\de$, which helps  us  relax the restriction on $\g$.
\begin{proposition}\la{p2} Let $(\n_{\de},c_{\de},\u_{\de},Q_{\de})$  be   the solution  obtained in Theorem \ref{t4.1}.       Then, under the assumptions   in Theorem \ref{t}, the following inequality holds for all $s\in (1,\frac{3}{2})$,
 \be\la{ss1}  \|\de\n_{\de}^{4}+\n_{\de}^{\g} \|_{L^{s}}+\|Q_{\de}  \|_{L^{6}}+\|\u_{\de}\|_{H^{1}_{0}}+\|\na Q_{\de}\|_{H^{1}}+\|c_{\de}\|_{L^{\infty}}\le C,\ee where, and in what follows,   the constant $C$ is independent of $\de.$
\end{proposition}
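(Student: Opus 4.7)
The plan is to first pass to the limit $\ep\to 0$ in the basic energy inequality \eqref{a26} (dropping the now-vanishing $\ep$-terms), which yields its $\de$-version
$$\|\na\u_{\de}\|_{L^{2}}^{2}+\|\lap Q_{\de}\|_{L^{2}}^{2}+\|\na Q_{\de}\|_{L^{2}}^{2}+\|Q_{\de}\|_{L^{6}}^{6}\le C\bigl(1+\|\n_{\de}\|_{L^{6/5}}^{2}+\|c_{\de}\|_{L^{\infty}}^{6}\bigr),$$
together with $\|\n_{\de}\|_{L^{1}}=m_{1}$. Invoking Lemma \ref{mos} applied to $\eqref{n6}_{2}$ gives $\|c_{\de}\|_{L^{\infty}}\le Cm_{2}(1+\|\u_{\de}\|_{L^{6}})^{C_{1}}$; combined with the Sobolev embedding $\|\u_{\de}\|_{L^{6}}\le C\|\na\u_{\de}\|_{L^{2}}$ and the smallness of $m_{2}$, the entire proposition reduces to producing a uniform-in-$\de$ bound for the pressure $\de\n_{\de}^{4}+\n_{\de}^{\g}$ in some space strictly better than $L^{1}$.

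The central step is therefore a pressure estimate. Following the Bogovskii strategy already used in Lemma \ref{lem4.1}, I would test $\eqref{n6}_{3}$ against $\mathcal{B}\bigl(\n_{\de}^{\th}-|\mathcal{O}|^{-1}\int\n_{\de}^{\th}\bigr)$ for a small exponent $\th>0$. This produces, on the left, a quantity comparable to $\int(\de\n_{\de}^{4}+\n_{\de}^{\g})\n_{\de}^{\th}$; on the right, convective terms of the form $\int\n_{\de}|\u_{\de}|^{2}\,|\na\mathcal{B}(\n_{\de}^{\th})|$, the contributions from $\mathbb{S}_{1}(Q)$ and $\mathbb{S}_{2}(c,Q)$, and the active coupling $\int c_{\de}^{2}Q_{\de}:\na\mathcal{B}(\n_{\de}^{\th})$. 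For large $\g$ all these terms absorb by standard H\"older and interpolation; for $\g$ close to $1$ the convective term is the bottleneck, since $\n_{\de}|\u_{\de}|^{2}$ is a priori only in $L^{1}$.

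To overcome this, I would import the weighted technique of \cite{freh,jz,lw,lw1}: replace the global Bogovskii test by weighted variants, using a smooth cut-off $\phi$ that separates a boundary layer from the interior of $\mathcal{O}$. In the interior piece, a power-type weight exploits the local higher integrability of $\u_{\de}$; near $\p\mathcal{O}$, the Hardy inequality applied to $\u_{\de}\in H^{1}_{0}(\mathcal{O})$ supplies a gain of integrability against a boundary-distance weight. The active terms $\si_{*}c_{\de}^{2}Q_{\de}$ and $\frac{c_{\de}-c_{*}}{2}Q_{\de}$ are tamed by \eqref{s}, which forces the constants multiplying any reappearance of $\n_{\de}^{\g+\th}$ on the right to be strictly less than the corresponding coefficient on the left, so absorption succeeds. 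The outcome is $\int(\de\n_{\de}^{4}+\n_{\de}^{\g})\n_{\de}^{\th}\le C$, and then H\"older's inequality together with $\|\n_{\de}\|_{L^{1}}=m_{1}$ upgrades this to $\|\de\n_{\de}^{4}+\n_{\de}^{\g}\|_{L^{s}}\le C$ for every $s\in(1,\tfrac{3}{2})$.

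The main obstacle will be carrying out the weighted pressure estimate for $\g$ arbitrarily close to $1$: choosing the exponent $\th$, the two weights, and the smallness threshold for $m_{2}$ simultaneously compatible, so that all cross terms stemming from the active coupling and the $Q$-tensor stresses can be absorbed at once. Once the pressure bound is in hand, the remaining estimates in \eqref{ss1} follow by plugging it back into the basic energy inequality and the Moser bound established at the beginning.
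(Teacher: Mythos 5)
Your high-level strategy matches the paper's: apply the Moser bound to $c_\de$, reduce to a uniform pressure estimate, and invoke the weighted-test-function technique of \cite{freh,jz,lw,lw1} for $\g$ close to $1$. However, the crucial mechanism that actually closes the bootstrap for the kinetic energy is underspecified in your proposal, and the version of the weighted step you sketch would not get there by itself.

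The bottleneck you correctly identify is the convective term $\int\n_\de|\u_\de|^2|\na\mathcal{B}(\n_\de^\th)|$: it is circular to absorb this without already knowing that $\n_\de|\u_\de|^2$ lies in a space better than $L^1$. The paper resolves this by a three-step chain that your proposal does not reproduce. First, the pressure estimate (Lemma \ref{lem5.1}) is obtained by \emph{duality}, testing $\eqref{n6}_3$ against $\mathcal{B}(h-(h)_\mathcal{O})$ for arbitrary $h\in L^{s/(s-1)}$ — not against $\mathcal{B}(\n_\de^\th - (\n_\de^\th)_\mathcal{O})$ — giving $\|\de\n_\de^4+\n_\de^\g\|_{L^s}\le C(1+\|\n_\de|\u_\de|^2\|_{L^s}+\cdots)$. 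Second, Lemma \ref{c1} tests $\eqref{n6}_3$ against the explicit multipliers $\xi(x)=\phi\na\phi\bigl(\phi+|x-x^*|^{2/(2-\a)}\bigr)^{-\a}$ near the boundary and $\frac{x-x^*}{|x-x^*|^\a}\chi^2$ in the interior, producing the \emph{joint} weighted bound $\sup_{x^*}\int\frac{\de\n_\de^4+\n_\de^\g+\n_\de|\u_\de|^2}{|x-x^*|^\a}\le\cdots$; these are not Bogovskii functions and not a Hardy-inequality step, but a direct momentum-equation test. Third, Lemma \ref{lem2.3x} (the weighted Sobolev inequality $\int|\u|^2 f\le C\|\u\|^2_{H^1_0}\int\frac{f}{|x-x^*|}$) is combined with the factorization in \eqref{s40}, with the specific choice $\th=\frac{\g-1}{2\g}$, to close on the quantity $\mathbb{A}=\int\n_\de|\u_\de|^{2(2-\th)}$, which interpolates against $\|\n_\de\|_{L^1}=m_1$ to give $\|\n_\de|\u_\de|^2\|_{L^{3/2}}$ and $\|\n_\de\u_\de\|_{L^1}$. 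Without this particular interlocking of the weighted pressure/kinetic bound, the weighted Sobolev inequality, and the three-factor H\"older split, the bootstrap for $\g$ arbitrarily close to $1$ does not close. So the gap is concrete: your "Hardy inequality / local higher integrability" step needs to be replaced by the precise quantitative chain above, and your direct test against $\mathcal{B}(\n_\de^\th)$ needs to be replaced by a duality-type pressure bound that plugs the kinetic-energy norm into the loop.
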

The proof of Proposition \ref{p2}  borrows some   ideas developed  in \cite{freh,lw,mpm,pw}. We present the details below through      several lemmas.

\begin{lemma}\la{lem5.2x} Let $(\n_{\de},c_{\de},\u_{\de},Q_{\de})$ be the solution  obtained in Theorem \ref{t4.1}. Then  there  are   constants $C$ and $C_{1}$   independent of $\de$ such that,
\be\la{1.3ff}\ba   \|c\|_{L^{\infty}}^{6}&+ \int \left(|\na \u_{\de}|^{2}  +  |\lap Q_{\de}|^{2}+   |\na Q_{\de}|^{2}
+    |Q_{\de}|^{6} \right) \\
&\le   C\left(1+\|\n_{\de}\u_{\de}\|_{L^{1}}+ m_{2}\|\n_{\de}\u_{\de}\|_{L^{1}}^{3C_{1}}\right),\ea\ee provided that   $m_{2}$ is sufficiently small.\end{lemma}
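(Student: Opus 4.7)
The plan is to test the momentum equation $\eqref{n6}_{3}$ against $\u_\de$ and the $Q$-equation $\eqref{n6}_{4}$ against $-\lap Q_\de + Q_\de + c_{*}Q_\de\,{\rm tr}(Q_\de^{2})$, in the spirit of \eqref{1.1} and \eqref{1.3}, and then to close the resulting estimate using the Moser iteration of Lemma \ref{mos} together with the smallness assumption on $m_{2}$. The calculation is in fact cleaner than its $\ep$-level counterpart in Section~3: the continuity equation now reads exactly $\div(\n_\de\u_\de)=0$, so the convection term $\int\div(\n_\de\u_\de\otimes\u_\de)\cdot\u_\de$ vanishes, and via the renormalized formulation applied to $b(\n)=\n^{4}$ and $b(\n)=\n^{\g}$ the pressure contribution $\int\u_\de\cdot\na(\de\n_\de^{4}+\n_\de^{\g})$ is also zero. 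Moreover, since $\omega$ replaces $\langle\omega\rangle$ here, the cancellation $I_{3}+J_{4}=0$ from \eqref{pp3} holds exactly, with no mollification error to absorb.

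After summing the two identities and using $I_{2}+J_{6}=0$, $J_{5}=0$, and $J_{1}\le 0$ from \eqref{pp1}--\eqref{pp1a}, the right-hand side reduces to a handful of explicit terms. The forcing $\int \n_\de g_{2}\cdot\u_\de$ is bounded directly by $\|g_{2}\|_{L^{\infty}}\|\n_\de\u_\de\|_{L^{1}}$, introducing the quantity $\|\n_\de\u_\de\|_{L^{1}}$ that appears in \eqref{1.3ff}. The active-particle/fluid coupling is handled by H\"older and Young:
\[
\left|\si_{*}\int c_\de^{2}Q_\de:\na\u_\de\right|\le \frac{\mu}{4}\|\na\u_\de\|_{L^{2}}^{2}+\eta\|Q_\de\|_{L^{6}}^{6}+C\|c_\de\|_{L^{\infty}}^{6},
\]
while the remaining $J$-type contributions are treated exactly as in \eqref{s11}--\eqref{s11a}. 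Absorbing the small factors into the left-hand side yields the intermediate bound
\[
\int|\na\u_\de|^{2}+\int|\lap Q_\de|^{2}+\int|\na Q_\de|^{2}+\int|Q_\de|^{6}\le C\bigl(1+\|\n_\de\u_\de\|_{L^{1}}+\|c_\de\|_{L^{\infty}}^{6}\bigr).
\]

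To close the estimate I invoke Lemma \ref{mos}, which together with the Sobolev embedding $\|\u_\de\|_{L^{6}}\le C\|\na\u_\de\|_{L^{2}}$ gives
\[
\|c_\de\|_{L^{\infty}}^{6}\le Cm_{2}^{6}\bigl(1+\|\na\u_\de\|_{L^{2}}\bigr)^{6C_{1}}.
\]
Substituting into the intermediate bound, and using the smallness of $m_{2}$ together with the a priori finiteness of $\|\na\u_\de\|_{L^{2}}$ inherited from Theorem \ref{t4.1} to select the correct branch of the resulting polynomial inequality, one deduces $\|\na\u_\de\|_{L^{2}}^{2}\le C(1+\|\n_\de\u_\de\|_{L^{1}})$. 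Re-inserting this bound into the Moser estimate, and using $m_{2}^{6}\le m_{2}$ for $m_{2}\le 1$, yields $\|c_\de\|_{L^{\infty}}^{6}\le Cm_{2}(1+\|\n_\de\u_\de\|_{L^{1}}^{3C_{1}})$; combined with the intermediate energy bound, this delivers \eqref{1.3ff}.

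The hard part is the closure step. Since the Moser exponent $C_{1}$ is not small (indeed $6C_{1}\gg 2$), the nonlinear term $Cm_{2}^{6}\|\na\u_\de\|_{L^{2}}^{6C_{1}}$ is super-quadratic in $\|\na\u_\de\|_{L^{2}}$ and cannot be absorbed by a naive Young inequality. One must rely instead on a bootstrap-type argument, combining the a priori bound from the previous approximation level with the smallness of $m_{2}$ to rule out the spurious large branch of the inequality, and it is precisely here that the threshold $\mathfrak{m}_{2}$ of Theorem~\ref{t} is calibrated.
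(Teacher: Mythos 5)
Your proposal matches the paper's proof in all essentials: same energy identities (momentum tested against $\u_\de$, the $Q$-equation against $-\lap Q_\de+Q_\de+c_*Q_\de\,\mathrm{tr}(Q_\de^2)$), same cancellations (the $I_2+J_6$, $J_5$, $J_1$, $I_3+J_4$ structure of \eqref{pp1}--\eqref{pp3}), same appeal to Lemma \ref{mos} and the smallness of $m_2$ for closure. The difference is purely in how the recursive inequality is resolved. The paper bootstraps through $\|c_\de\|_{L^\infty}$: it feeds the intermediate energy bound \eqref{1.3h} into \eqref{3.3} to obtain $\|c_\de\|_{L^\infty}\le Cm_2(1+\|c_\de\|_{L^\infty}^3+\|\n_\de\u_\de\|_{L^1}^{1/2})^{C_1}$ and absorbs the self-referential $\|c_\de\|_{L^\infty}^{3C_1}$ term, obtaining $\|c_\de\|_{L^\infty}\le 2C+Cm_2\|\n_\de\u_\de\|_{L^1}^{C_1/2}$; raising to the sixth power and plugging back gives \eqref{1.3ff}. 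You instead bootstrap through $\|\na\u_\de\|_{L^2}$ and claim the intermediate bound $\|\na\u_\de\|_{L^2}^2\le C(1+\|\n_\de\u_\de\|_{L^1})$, which is actually stronger than what the lemma asserts (and stronger than the paper ever proves at this stage, since the paper's left-hand side $\int|\na\u_\de|^2$ is only controlled modulo the extra $m_2\|\n_\de\u_\de\|_{L^1}^{3C_1}$ term). You are right that both closures rest on the same ``select the small branch'' heuristic for a super-linear polynomial inequality, and you are right to flag it as the delicate step. However, be careful with the justification you offer: the $\|\na\u_\de\|_{L^2}$ bound inherited from Theorem \ref{t4.1} is $\de$-dependent, so ``a priori finiteness'' alone does not locate $\|\na\u_\de\|_{L^2}$ on the small branch uniformly in $\de$, and invoking it as you do suggests more rigour than is actually available. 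The paper's route, which keeps the $m_2\|\n_\de\u_\de\|_{L^1}^{C_1/2}$ dependence in the $\|c_\de\|_{L^\infty}$ bound rather than pursuing the cleaner-looking $\|\na\u_\de\|_{L^2}^2\le C(1+\|\n_\de\u_\de\|_{L^1})$, is the safer way to organise the argument, since the stronger intermediate claim you state is not needed and is not proved by either route.
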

\begin{proof}
Using the same computation as   that in \eqref{pp1}-\eqref{pp3},   we multiply    $\eqref{n6}_{3}$ by $\u_{\de}$ and        $\eqref{n6}_{4}$ by $-\lap Q_{\de}+Q_{\de}+c_{*}Q_{\de}tr(Q_{\de}^{2})$ to  deduce
\bnn\ba &\mu \int |\na \u_{\de}|^{2}+(\lambda +\mu)\int |{\rm div} \u_{\de}|^{2}+\int  |\lap Q_{\de}|^{2}+  \int  |\na Q_{\de}|^{2}
 + c_{*} \int \left(|Q_{\de}|^{4}+c_{*}|Q_{\de}|^{6}\right) \\
&\le   \int  \n_{\de} g_{2}   \cdot \u_{\de}  +  \int b\left(Q_{\de}^{2}-\frac{1}{3} tr(Q_{\de}^{2})\mathbb{I}\right):\left(-\lap Q_{\de}+Q_{\de}+c_{*}Q_{\de}tr(Q_{\de}^{2})\right) \\
 &\quad  + \int  g_{3} :\left(-\lap Q_{\de}+Q_{\de}+c_{*}Q_{\de}tr(Q_{\de}^{2})\right)\\
 &\quad-  \si_{*}\int c_{\de}^{2}Q_{\de} :\na \u_{\de} +\int \frac{(c_{\de}-c_{*})}{2}Q_{\de}:(\lap Q_{\de}-Q_{\de}-c_{*}Q_{\de}tr(Q_{\de}^{2})).\ea\enn
From   \eqref{0}  it follows that
 \bnn\ba  & \left|\int b\left(Q_{\de}^{2}-\frac{1}{3}tr(Q_{\de}^{2})\mathbb{I}\right):\left(-\lap Q_{\de}+Q_{\de}+c_{*}Q_{\de}tr(Q_{\de}^{2})\right) \right|\\
 &\quad  + \left|\int  g_{3} :\left(-\lap Q_{\de}+Q_{\de}+c_{*}Q_{\de}tr(Q_{\de}^{2})\right)\right|\\
 &\le C (1+\|Q_{\de}\|_{L^{4}}^{2}+  \|g_{3}\|_{L^{2}})\left(\|\lap Q_{\de}\|_{L^{2}}+ \|Q_{\de}\|_{L^{6}}^{3}+1\right)\\
 &\le  C+\frac{ c_{*}^{2}}{4}
  \int  |Q_{\de}|^{6}
 + \frac{1}{4}\int  |\lap Q_{\de}|^{2},\ea\enn
 and \bnn\ba  & \left|-  \si_{*}\int c_{\de}^{2}Q_{\de} :\na \u_{\de} +\int \frac{(c_{\de}-c_{*})}{2}Q_{\de}:(\lap Q_{\de}-Q_{\de}-c_{*}Q_{\de}tr(Q_{\de}^{2}))\right| \\
 &\le   C\|c_{\de}\|_{L^{\infty}}^{2}\|Q_{\de}\|_{L^{2}}\|\na\u_{\de}\|_{L^{2}}+C (1+ \|c_{\de}\|_{L^{\infty}})
 \left(\|Q_{\de}\|_{L^{2}}\|\lap Q_{\de} \|_{L^{2}}+  \|Q_{\de}\|_{L^{6}}^{4}+1\right)\\
 &\le C  +C  \|c_{\de}\|_{L^{\infty}}^{6} + \frac{  c_{*}^{2}}{4}
  \int  |Q_{\de}|^{6} + \frac{\mu}{2}\int |\na \u_{\de}|^{2}
 + \frac{1}{4}\int  |\lap Q_{\de}|^{2}.\ea\enn
The last three inequalities provide us
\be\la{1.3h}\ba &  \int |\na \u_{\de}|^{2} +\int  |\lap Q_{\de}|^{2}+  \int  |\na Q_{\de}|^{2}
 +   \int  |Q_{\de}|^{4}  \le    C+C  \|c_{\de}\|_{L^{\infty}}^{6}+\|\n_{\de}\u_{\de}\|_{L^{1}}.\ea\ee
With the aid of  \eqref{1.3h} and   \eqref{3.3}, we choose  $m_{2}$ sufficiently small such that
\bnn\ba\|c_{\de}\|_{L^{\infty}}&\le C(1+\|\u_{\de}\|_{L^{6}})^{C_{1}}(1+\|g_{1}\|_{L^{\infty}})m_{2}\\
&\le C(1+\|c_{\de}\|_{L^{\infty}}^{3}+\|\n_{\de}\u_{\de}\|_{L^{1}}^{\frac{1}{2}})^{C_{1}} m_{2}\\
&\le 2C+Cm_{2}\|\n_{\de}\u_{\de}\|_{L^{1}}^{\frac{C_{1}}{2}},\ea\enn
which together with  \eqref{1.3h} lead  to   the desired estimate \eqref{1.3ff}. \end{proof}

\begin{lemma}\la{lem5.1} Let $(\n_{\de},c_{\de},\u_{\de},Q_{\de})$  be   the solution obtained in Theorem \ref{t4.1}. Then, for any $s\in (1,\frac{3}{2}),$  the following  inequality holds true
 \be\ba\la{q1a}  \|\de \n_{\de}^{4}+\n_{\de}^{\g}\|_{L^{s}} \le  C\left(1+\|\n_{\de} |\u_{\de}|^{2} \|_{L^{s}} + m_{2}\|\n_{\de}\u_{\de}\|_{L^{1}}^{3C_{1}}\right), \ea\ee  provided  that $m_{2}$ is sufficiently small. \end{lemma}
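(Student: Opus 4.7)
The strategy is to test the momentum equation $\eqref{n6}_{3}$ with a Bogovskii-type corrector that puts the pressure on the left-hand side. Set $P_{\de} := \de \n_{\de}^{4} + \n_{\de}^{\g}$ and take
\[
\phi := \mathcal{B}\!\left(P_{\de}^{s-1} - \overline{P_{\de}^{s-1}}\right), \qquad \overline{P_{\de}^{s-1}} := \frac{1}{|\mathcal{O}|}\int P_{\de}^{s-1},
\]
where $\mathcal{B}$ is the operator from Lemma \ref{lem2.2g}. Since $\de>0$ is fixed, Theorem \ref{t4.1} guarantees $P_{\de}^{s-1}\in L^{q}(\mathcal{O})$ for every $q\in(1,\infty)$, so $\phi\in W^{1,q}_{0}$ is well defined. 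With $s'=s/(s-1)>3$, the standard Bogovskii bound together with the embedding $W^{1,s'}_{0}\hookrightarrow L^{\infty}$ yields
\[
\|\na \phi\|_{L^{s'}}+\|\phi\|_{L^{\infty}}\le C \|P_{\de}\|_{L^{s}}^{s-1},
\]
and since $s<3/2$ forces $2(s-1)<s$, H\"older's inequality on the bounded domain $\mathcal{O}$ also gives $\|\na \phi\|_{L^{2}}+\|\na \phi\|_{L^{3}}\le C\bigl(1+\|P_{\de}\|_{L^{s}}^{s-1}\bigr)$.

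Testing $\eqref{n6}_{3}$ against $\phi$ and integrating by parts, the pressure produces $\|P_{\de}\|_{L^{s}}^{s}$ minus a mean correction $\overline{P_{\de}^{s-1}}\int P_{\de}$, while all remaining terms appear as duality pairings of their fluxes with $\na\phi$ or of the force densities with $\phi$. The convective term is controlled by $\|\n_{\de}|\u_{\de}|^{2}\|_{L^{s}}\|\na\phi\|_{L^{s'}}\le C\|\n_{\de}|\u_{\de}|^{2}\|_{L^{s}}\|P_{\de}\|_{L^{s}}^{s-1}$; the Navier-Stokes stress contributes $C\|\na \u_{\de}\|_{L^{2}}\|P_{\de}\|_{L^{s}}^{s-1}$; the forcing $\int \n_{\de} g_{2}\cdot \phi$ is bounded by $Cm_{1}\|g_{2}\|_{L^{\infty}}\|P_{\de}\|_{L^{s}}^{s-1}$. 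The elastic Ericksen stress $\mathbb{S}_{1}(Q_{\de})$ and the coupling stress $\mathbb{S}_{2}(c_{\de},Q_{\de})$ contribute terms bounded by suitable $L^{p}$ norms of $Q_{\de}$, $\na Q_{\de}$, $\lap Q_{\de}$ and $c_{\de}$ (each controlled via Lemma \ref{lem5.2x}) multiplied by $\|\na\phi\|_{L^{2}}$ or $\|\na\phi\|_{L^{3}}$, so each has the form (Lemma \ref{lem5.2x} quantity)$\cdot\|P_{\de}\|_{L^{s}}^{s-1}$.

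Dividing by $\|P_{\de}\|_{L^{s}}^{s-1}$ and invoking Lemma \ref{lem5.2x} to re-express every energy quantity through $\|\n_{\de}\u_{\de}\|_{L^{1}}$, $m_{2}$ and universal constants, a Young-inequality absorption together with the smallness of $m_{2}$ produces the claimed estimate $\|P_{\de}\|_{L^{s}}\le C\bigl(1+\|\n_{\de}|\u_{\de}|^{2}\|_{L^{s}}+m_{2}\|\n_{\de}\u_{\de}\|_{L^{1}}^{3C_{1}}\bigr)$, after relabeling $C_{1}$ as needed.

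The main obstacle is the mean-value term $\overline{P_{\de}^{s-1}}\int P_{\de}$ produced on the left, since for $\g$ close to $1$ the quantity $\int P_{\de}$ is not controlled by the mass constraint $\int \n_{\de}=m_{1}$ alone. Following the device of \cite{freh,lw}, this is circumvented by replacing $P_{\de}^{s-1}$ in the Bogovskii test with $(P_{\de}-K)_{+}^{s-1}$ for a sufficiently large $K$, making the mean contribution small enough to be absorbed into $\|P_{\de}\|_{L^{s}}^{s}$ while preserving every estimate above.
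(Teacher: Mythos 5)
Your argument follows essentially the same route as the paper's: test the momentum equation $\eqref{n6}_3$ against a Bogovskii corrector and estimate each flux through Lemma~\ref{lem5.2x}. The one structural difference is that the paper runs this by duality, testing against $\mathcal{B}\bigl(h-(h)_{\mathcal{O}}\bigr)$ for an \emph{arbitrary} $h\in L^{s/(s-1)}$ and taking the supremum over $h$, rather than inserting the specific weight $h=P_\de^{s-1}$. That duality formulation sidesteps an admissibility issue that your version must confront: Theorem~\ref{t4.1} yields only $\n_\de\in L^5$ for fixed $\de$, hence $P_\de\in L^{5/4}$, so your claim that $P_\de^{s-1}\in L^q$ for every $q<\infty$ is false; $P_\de^{s-1}$ belongs to $L^{s/(s-1)}$ only when $s\le 5/4$. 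For the full range $s\in(1,\tfrac32)$ you would need either the duality form, or to run the computation with a truncation such as $\min(P_\de,N)^{s-1}$ and let $N\to\infty$. This is routine but is a genuine hole in the proposal as written.

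Your handling of the mean-correction term $\overline{P_\de^{s-1}}\int P_\de$ is also heavier than necessary, and slightly mis-diagnoses the obstacle. The paper absorbs $C\|\de\n_\de^4+\n_\de^\g\|_{L^1}$ directly into $\tfrac12\|\de\n_\de^4+\n_\de^\g\|_{L^s}$ in \eqref{q5.3a}; the mechanism behind this (left implicit in the paper) is interpolation against the mass constraint $\int\n_\de=m_1$. Since $s>1$, setting $\theta=(s-1)/(\g s-1)>0$ and $\theta'=(s-1)/(4s-1)>0$ one has
$\|\n_\de^\g\|_{L^1}\le m_1^{\g\theta}\|\n_\de\|_{L^{\g s}}^{\g(1-\theta)}$ and $\de\|\n_\de^4\|_{L^1}\le \de^{\theta'} m_1^{4\theta'}\bigl(\de^{-1}\|P_\de\|_{L^s}\bigr)^{1-\theta'}$, so Young's inequality gives $\|P_\de\|_{L^1}\le\eta\|P_\de\|_{L^s}+C(\eta,m_1)$ for any $\eta>0$, and the mean term is absorbed. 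The Frehse-type replacement $(P_\de-K)_+^{s-1}$ you sketch is a legitimate alternative, but its justification rests on the same mass constraint, so it does not buy extra generality for $\g$ close to $1$; the simpler interpolation absorption is what the paper actually uses.
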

 \begin{proof} As in  Lemma \ref{lem2.2g}, we introduce the Bogovskii operator \bnn \mathcal{B}:=\mathcal{B}(h-(h)_{\mathcal{O}})\quad{\rm with}\quad(h)_{\mathcal{O}}=|\mathcal{O}|^{-1}\int h.\enn
 Then, for any $h\in L^{\frac{s}{s-1}}$ with $s\in (1,\frac{3}{2})$,  Lemma \ref{lem2.2g} implies
 \be\la{s34}  \|\mathcal{B} \|_{L^{\infty}}+\|\na\mathcal{B} \|_{L^{2}}
+\|\na\mathcal{B} \|_{L^{\frac{s}{s-1}}} \le C\|h\|_{L^{\frac{s}{s-1}}}.\ee
Multiplying   $\eqref{n6}_{3}$ by $\mathcal{B}$ gives
\be\la{q5.3}\ba  & \int\left(\de \n_{\de}^{4}+\n_{\de}^{\g}  \right) h\\
&=(h)_{\mathcal{O}}\int \left(\de \n_{\de}^{4}+\n_{\de}^{\g}\right) -\int \n g_{2} \cdot \mathcal{B}(h-(h)_{\mathcal{O}})+ \int \mathbb{S}_{ns}:\na \mathcal{B} \\
&\quad -\int \n_{\de} \u_{\de}\otimes \u:\na \mathcal{B} +\int \mathbb{S}_{1} :\na \mathcal{B} + \int \mathbb{S}_{2}:\na \mathcal{B} \\
&\le C\|h\|_{L^{\frac{s}{s-1}}} \|\de \n_{\de}^{4}+\n_{\de}^{\g}\|_{L^{1}}+C\|\mathcal{B} \|_{L^{\infty}}   +C\|\na \u_{\de}\|_{L^{2}}\|\na \mathcal{B} \|_{L^{2}}\\
 &\quad +C\left(\|\n_{\de} |\u_{\de}|^{2}+c^{2}Q_{\de}+|Q_{\de}|^{2} +|Q_{\de}|^{4}+|\na Q_{\de}|^{2}+Q_{\de}\lap Q_{\de}\|_{L^{s}} )\|\na\mathcal{B} \|_{L^{\frac{s}{s-1}}}\right)\\
 &\le C\|h\|_{L^{\frac{s}{s-1}}} \left(1+\|\de \n_{\de}^{4}+\n_{\de}^{\g}\|_{L^{1}}+\|\n_{\de} |\u_{\de}|^{2}\|_{L^{s}}+\|\na \u_{\de}\|_{L^{2}}\right)\\
 &\quad  +C\|h\|_{L^{\frac{s}{s-1}}} \left( \|c^{2}Q_{\de}+|Q_{\de}|^{2} +|Q_{\de}|^{4}+|\na Q_{\de}|^{2}+Q_{\de}\lap Q_{\de}\|_{L^{s}} \right),
\ea\ee
where, for  the last  inequalities,  we have used   \eqref{b0}, \eqref{b1}, \eqref{0}, \eqref{s34} and $\|\n_{\de}\|_{L^{1}}=m_{1}$.
Due to the  arbitrariness of $h\in L^{\frac{s}{s-1}}$, it yields from    \eqref{q5.3}  that
\be\la{q5.3a}\ba   \|\de \n_{\de}^{4}+\n_{\de}^{\g} \|_{L^{s}}
 &\le C  \left(1+\|\de \n_{\de}^{4}+\n_{\de}^{\g}\|_{L^{1}}+ \|\n_{\de} |\u_{\de}|^{2}\|_{L^{s}}+\|\na \u_{\de}\|_{L^{2}}\right)\\
 &\quad +C \|c^{2}Q_{\de}+|Q_{\de}|^{2} +|Q_{\de}|^{4}+|\na Q_{\de}|^{2}+Q_{\de}\lap Q_{\de}\|_{L^{s}} \\
  &\le  \frac{1}{2} \|\de \n_{\de}^{4}+\n_{\de}^{\g}\|_{L^{s}}+\left(1+  \|\n_{\de} |\u_{\de}|^{2}\|_{L^{s}}+\|\na \u_{\de}\|_{L^{2}}\right)\\
 &\quad+C \|c^{2}Q_{\de}+|Q_{\de}|^{2} +|Q_{\de}|^{4}+|\na Q_{\de}|^{2}+Q_{\de}\lap Q_{\de}\|_{L^{s}}.\ea\ee
 Since $s\in (1,\frac{3}{2}),$  one has \bnn\ba &\| c_{\de}^{2}Q_{\de}+|Q_{\de}|^{2} +|Q_{\de}|^{4}+|\na Q_{\de}|^{2}+Q_{\de}\lap Q_{\de}\|_{L^{s}}\\
&\le C \left(1+\|c_{\de}\|_{L^{\infty}}^{3}+ \|Q_{\de}\|_{L^{6}}^{4}+ \|\na Q_{\de}\|_{L^{2}}^{2}+\|\lap Q_{\de}\|_{L^{2}}^{2}\right)\\
&\le C \left(1+\|c_{\de}\|_{L^{\infty}}^{3}+ \|Q_{\de}\|_{L^{6}}^{6}+ \|\na Q_{\de}\|_{L^{2}}^{2}+\|\lap Q_{\de}\|_{L^{2}}^{2}\right).\ea\enn
Therefore, substituting it  into \eqref{q5.3a} and utilizing  \eqref{1.3ff}  we obtain  \eqref{q1a}. \end{proof}

Next,  we shall deduce a  weighted estimate  on both  the pressure and kinetic energy.

 \begin{lemma} \la{c1} Let $(\n_{\de},c_{\de},\u_{\de},Q_{\de})$  be   the solution  obtained in Theorem \ref{t4.1}. Then, for any $\a \in (0,1)$ and $s\in (1,\frac{3}{2})$,  the following inequality  holds true
 \be\la{r5-3}\ba  &\sup_{x^{*}\in \overline{\mathcal{O}}}\int  \frac{\left(\de \n_{\de}^{4}+\n_{\de}^{\g} +\n_{\de} |\u_{\de}|^{2}\right)(x)}{|x-x^{*}|^{\a}}dx\\&\le  C\left(1+\|\n_{\de} |\u_{\de}|^{2} \|_{L^{s}}+  m_{2}\|\n_{\de}\u_{\de}\|_{L^{1}}^{3C_{1}}\right),\ea\ee  provided that $m_{2}$ is sufficiently small.
\end{lemma}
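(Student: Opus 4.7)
The plan is to test the momentum equation $\eqref{n6}_{3}$ against a carefully chosen vector field $\phi^{x^{*}}$ whose divergence produces the singular weight $|x-x^{*}|^{-\a}$, so that the pressure and kinetic-energy contributions emerge on the left-hand side of \eqref{r5-3} with favourable signs. A natural starting point is the radial field
\[
\psi^{x^{*}}(x)=\frac{x-x^{*}}{|x-x^{*}|^{\a}},
\]
for which direct computation gives $\div\,\psi^{x^{*}}=(3-\a)|x-x^{*}|^{-\a}$, and, using $(\u\cdot(x-x^{*}))^{2}\le |\u|^{2}|x-x^{*}|^{2}$,
\[
(\n_{\de}\u_{\de}\otimes\u_{\de}):\na\psi^{x^{*}}\ge (1-\a)\,\frac{\n_{\de}|\u_{\de}|^{2}}{|x-x^{*}|^{\a}}.
\]
Since $\a\in(0,1)$, both coefficients $3-\a$ and $1-\a$ are strictly positive, which is the structural reason the weighted estimate holds.

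Because $\u_{\de}\in H^{1}_{0}$, the admissible test field must vanish on $\p\mathcal{O}$. Following the strategy outlined in the introduction, we treat interior and boundary locations of $x^{*}$ separately. For $\mathrm{dist}(x^{*},\p\mathcal{O})\ge d_{0}$, take $\phi^{x^{*}}=\eta\,\psi^{x^{*}}$ with $\eta\in C^{\infty}_{c}(\mathcal{O})$ a fixed cutoff independent of $x^{*}$; the contributions from $\na\eta$ are supported at positive distance from $x^{*}$ and are bounded uniformly in $x^{*}$ by $\|\n_{\de}|\u_{\de}|^{2}\|_{L^{s}}$ and $\|\de\n_{\de}^{4}+\n_{\de}^{\g}\|_{L^{s}}$, paired against $L^{s/(s-1)}$ norms that are finite since $\psi^{x^{*}}$ is bounded on the support of $\na\eta$. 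For $x^{*}$ near or on $\p\mathcal{O}$, we use local flattening of $\p\mathcal{O}$ and replace $\psi^{x^{*}}$ by a modification that vanishes on $\p\mathcal{O}$ while preserving the same principal singular behaviour in a half-ball around $x^{*}$; this is the boundary construction alluded to in the introduction.

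With $\phi^{x^{*}}$ in hand, integration by parts in the momentum equation yields
\[
(3-\a)\!\int\!\frac{\de\n_{\de}^{4}+\n_{\de}^{\g}}{|x-x^{*}|^{\a}}\eta+(1-\a)\!\int\!\frac{\n_{\de}|\u_{\de}|^{2}}{|x-x^{*}|^{\a}}\eta\le \mathcal{R},
\]
where $\mathcal{R}$ collects: the Navier--Stokes viscous stress, controlled by $\|\na\u_{\de}\|_{L^{2}}\|\na\phi^{x^{*}}\|_{L^{2}}$ with $\|\na\phi^{x^{*}}\|_{L^{2}}$ finite uniformly in $x^{*}$ since $2\a<3$; the body force $\int\n_{\de}g_{2}\cdot\phi^{x^{*}}$, bounded by $\|g_{2}\|_{L^{\infty}}\|\n_{\de}\|_{L^{1}}\|\phi^{x^{*}}\|_{L^{\infty}}\le C$ thanks to $|\phi^{x^{*}}|\le C|x-x^{*}|^{1-\a}$; the $\mathbb{S}_{1}(Q_{\de})$ contributions, consisting of $|\na Q_{\de}|^{2}$ and polynomials in $Q_{\de}$, all controlled by Lemma \ref{lem5.2x} paired with $\|\na\phi^{x^{*}}\|_{L^{p}}\le C$ for $p<3/\a$; and the cross term $\si_{*}c_{\de}^{2}Q_{\de}:\na\phi^{x^{*}}$, bounded by $\|c_{\de}\|_{L^{\infty}}^{2}\|Q_{\de}\|_{L^{6}}\|\na\phi^{x^{*}}\|_{L^{6/5}}$, which through Lemma \ref{lem5.2x} supplies the $m_{2}\|\n_{\de}\u_{\de}\|_{L^{1}}^{3C_{1}}$ piece of \eqref{r5-3}.

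The main obstacle is the higher-order term $Q_{\de}\lap Q_{\de}-\lap Q_{\de}\,Q_{\de}$ in $\mathbb{S}_{2}$: bounding it directly would demand pointwise control of $|\lap Q_{\de}|$ against a singular weight, which is unavailable. The remedy mirrors the manipulation carried out in \eqref{k4}: integrate by parts to transfer a derivative from $\lap Q_{\de}$ onto $\phi^{x^{*}}$, then exploit the symmetry of $Q_{\de}$ so that the leading antisymmetric piece cancels, reducing the expression to one involving $\na Q_{\de}\otimes\na Q_{\de}$ contracted with $\na^{2}\phi^{x^{*}}$ and $\na\phi^{x^{*}}$ terms, both of which are controllable by the $H^{2}$ bound on $Q_{\de}$ provided by Lemma \ref{lem5.2x} together with weighted $L^{p}$ norms of $\na\phi^{x^{*}}$ that stay finite for $\a<1$. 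Once these auxiliary terms are dominated, the smallness of $m_{2}$ absorbs any residual nonlinearities and closes the estimate to deliver \eqref{r5-3}. The two delicate points -- the boundary construction of $\phi^{x^{*}}$, and the $Q\lap Q$ symmetry cancellation carried out under a singular weight rather than against a smooth test function -- are where the novelty of the argument lies.
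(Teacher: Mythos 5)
Your overall strategy --- testing the momentum balance against a vector field whose divergence generates the singular weight $|x-x^*|^{-\alpha}$, exploiting the positive coefficients $3-\alpha$ and $1-\alpha$, and treating interior and boundary points separately --- is indeed the route the paper takes. But you misidentify the main obstacle.

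You claim the term $Q_{\de}\lap Q_{\de}-\lap Q_{\de}\,Q_{\de}$ in $\mathbb{S}_{2}$ "would demand pointwise control of $|\lap Q_{\de}|$ against a singular weight" and therefore requires the symmetry cancellation of \eqref{k4}. It does not. By Lemma~\ref{lem5.2x}, $\|Q_{\de}\|_{L^{6}}$ and $\|\lap Q_{\de}\|_{L^{2}}$ are controlled, hence $\|Q_{\de}\lap Q_{\de}\|_{L^{3/2}}\le\|Q_{\de}\|_{L^{6}}\|\lap Q_{\de}\|_{L^{2}}$ is bounded; meanwhile the test gradient satisfies $\|\na\xi\|_{L^{q}}\le C(\a)$ uniformly in $x^{*}$ for every $q<3/\a$, in particular $q=3$. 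The ordinary H\"older pairing $\int |Q_{\de}||\lap Q_{\de}|\,|\na\xi|\le \|Q_{\de}\lap Q_{\de}\|_{L^{3/2}}\|\na\xi\|_{L^{3}}$ already closes this contribution; this is exactly the paper's estimate \eqref{r13}, which measures $|Q_{\de}||\lap Q_{\de}|$ in $L^{3/2}$ with no integration by parts at all. The cancellation in \eqref{k4} serves an entirely different purpose --- passing to a weak limit when both $\lap Q_{\ep}$ and $\p_{i}\lap^{-1}(\n_{\ep})$ converge only weakly --- and is irrelevant to an a priori bound. Worse, your proposed remedy of transferring a derivative from $\lap Q_{\de}$ onto $\phi^{x^{*}}$ introduces $\na^{2}\phi^{x^{*}}\sim |x-x^{*}|^{-\a-1}$, which lies in $L^{q}$ only for $q<3/(\a+1)$, a strictly smaller range; after the $\na Q\otimes\na Q$ pieces cancel by symmetry you are left with $Q\na Q:\na^{2}\phi^{x^{*}}$, which requires the same $H^{2}$ information on $Q_{\de}$ and is no easier than the term you began with. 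You have traded a trivial step for a harder one.

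A second gap: for the boundary case you offer only "local flattening" and "a modification that vanishes on $\p\mathcal{O}$ while preserving the singular behaviour." That is the genuinely delicate part of the lemma and needs to be made precise. The paper builds the boundary-adapted test field explicitly as the Frehse-type weight
\[
\xi(x)=\frac{\phi(x)\na\phi(x)}{\bigl(\phi(x)+|x-x^{*}|^{2/(2-\a)}\bigr)^{\a}},
\]
where $\phi$ is a regularized boundary distance satisfying \eqref{x7}. The exponent $2/(2-\a)>1$ is chosen so that $\phi(x)+|x-x^{*}|^{2/(2-\a)}\lesssim |x-x^{*}|$ near $\p\mathcal{O}$, which gives $\div\xi\ge -C+C|x-x^{*}|^{-\a}$, and the identity $\p_{j}\p_{i}\phi=\phi^{-1}\bigl(\p_{i}(x-\tilde{x})^{j}-\p_{j}\phi\,\p_{i}\phi\bigr)$ together with Cauchy--Schwarz gives the matching lower bound for the convective term. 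Note also that the paper's interior argument absorbs the annulus term $\int_{B_{2r}\setminus B_{r}}$ by invoking the boundary estimate when $x^{*}$ lies near $\p\mathcal{O}$, so the missing boundary construction propagates into the interior case as well.
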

\begin{proof} We  adopt  some ideas   in     \cite{freh,lw,mpm}, and  divide the process into two cases.
\medskip

\noindent
{\it Case 1: The boundary  point case  $x^{*}\in \p\mathcal{O}.$}

As in \cite[Exercise 1.15]{zie}  we introduce a  function $\phi(x)\in C^{2}(\overline{\mathcal{O}})$ that behaviors like  the distance  when $x\in \mathcal{O}$ is near   the boundary and  is extended   smoothly  to the whole domain $\mathcal{O},$   and moreover,    \be\la{x7}\left\{\ba& \phi(x)>0\,\,\,{\rm in}\,\,\mathcal{O}\,\,\,{\rm and}\,\,\,\phi(x)=0\,\,\,{\rm on}\,\,\,\p\mathcal{O},\\
&|\phi(x)|\ge k_{1} \,\,\,{\rm if}\,\,\,x\in \mathcal{O}\,\,\,{\rm and}\,\,\,{\rm dist}(x,\,\p\mathcal{O})\ge k_{2},\\
&  \na \phi=\frac{x-\tilde{x}}{\phi(x)} =\frac{x-\tilde{x}}{|x-\tilde{x}|} \,\,\,{\rm if}\,\,\,x\in \mathcal{O}\,\,\,{\rm and}\,\,\,{\rm dist}(x,\,\p\mathcal{O})=|x-\tilde{x}|\le k_{2}, \ea\right.\ee
where the  positive constants $k_{1}$  and $k_2$ are given.
Following   \cite{freh}, we define
\be\la{r10} \xi(x)=\frac{\phi(x)\na\phi(x)}{ \left(\phi(x)+|x-x^{*}|^{\frac{2}{2-\a}}\right)^{ \a}}
\quad {\rm with}\quad x,\,x^{*}\in \overline{\mathcal{O}}.\ee
It follows  from   \eqref{x7} that,
for all   points $x$ satisfying ${\rm dist}(x,\,\p\mathcal{O})\le k_{2}$,   \be\la{z009}  \phi<\phi+|x-x^{*}|^{\frac{2}{2-\a}} \le  C|x-x^{*}|,\ee owing to $\frac{2}{2-\a}>1$.
By \eqref{r10}, a  careful computation   gives
 \be\la{r15} \ba\p_{j}\xi^{i}
 &=\frac{\phi\p_{j}\p_{i}\phi}{\left(\phi+|x-x^{*}|^{\frac{2}{2-\a}}\right)^{ \a}}+\frac{  \p_{j}\phi\p_{i}\phi}{\left(\phi+|x-x^{*}|^{\frac{2}{2-\a}}\right)^{\a}}\\
 &\quad- \a\frac{\phi\p_{i}\phi\p_{j}\phi}{\left(\phi+|x-x^{*}|^{\frac{2}{2-\a}}\right)^{\a+1}}- \a\frac{\phi\p_{i}\phi\p_{j}|x-x^{*}|^{\frac{2}{2-\a}}}{\left(\phi+|x-x^{*}|^{\frac{2}{2-\a}}\right)^{\a+1}},\quad i,j=1,2,3.\ea\ee
 Thus,    $ |\na \xi|\in L^{q}$ for all $q\in [2,\frac{3}{\a})$.
In view of  \eqref{r10}-\eqref{r15},  one has
\be\la{x8}\ba  C + \frac{C}{|x-x^{*}|^{\a}}\ge
 \div \xi &\ge -C + \frac{(1-\a)}{2}\frac{|\na \phi|^{2}}{\left(\phi+|x-x^{*}|^{\frac{2}{2-\a}}\right)^{\a}} \\
&\ge -C + \frac{C}{|x-x^{*}|^{\a}}.\ea\ee
 In addition, by \eqref{x7},
\be\la{x7a} \p_{j}\p_{i}\phi=\frac{\p_{i}(x-\tilde{x})^{j}}{\phi}-\frac{\p_{j}\phi\p_{i}\phi}{\phi}.\ee
Hence,
if we multiply   $\eqref{n6}_{3}$ by    $\xi$, we find
 \be\la{r11}\ba&\int \left(\de \n_{\de}^{4}+\n_{\de}^{\g}\right)\div \xi+ \int  \n_{\de}   \u_{\de} \otimes \u_{\de}  :\na  \xi\\
 &=  \int \left(\mathbb{S}_{ns}(\na \u_{\de})+\mathbb{S}_{1}(Q_{\de})+\mathbb{S}_{2}(c_{\de},Q_{\de})\right):\na  \xi-\int \n_{\de} g_{2} \cdot \xi.\ea\ee
Making use of  \eqref{c0}, \eqref{b0}, \eqref{b1}, \eqref{0},   \eqref{x8}, and the fact  $\xi\in W_{0}^{1,3}$, one deduces
\be\la{r13}\ba  & \left|\int \left(\mathbb{S}_{ns}(\na \u_{\de})+\mathbb{S}_{1}(Q_{\de})+\mathbb{S}_{2}(c_{\de},Q_{\de})\right):\na  \xi-\int \n_{\de} g_{2} \cdot \xi\right|\\
& \le C(\a) \left(1+\|\na \u_{\de} \|_{L^{2}} +\| |\na Q_{\de}|^{2}+|Q_{\de}|^{4} +|Q_{\de}||\lap Q_{\de}|+c_{\de}^{2}Q_{\de}\|_{L^{\frac{3}{2}}} \right)\\
& \le C(\a)\left(1+\|\na \u_{\de} \|_{L^{2}} +\|\lap Q_{\de} \|_{L^{2}}^{2} +\|\na Q_{\de} \|_{L^{2}}^{2}+\| Q_{\de}  \|_{L^{6}}^{4}+ \|c_{\de} \|_{L^{\infty}}^{3}\right),\ea\ee
and    \be\la{r12}\ba
 \int \left(\de \n_{\de}^{4}+\n_{\de}^{\g}\right)\div \xi\ \ge
 -C \int\left(\de \n_{\de}^{4}+\n_{\de}^{\g}\right)+C \int_{\mathcal{O}\cap B_{k_{2}}(x^{*})} \frac{\left(\de \n_{\de}^{4}+\n_{\de}^{\g} \right)}{|x-x^{*}|^{\a}}.\ea\ee
By  \eqref{x7a},  
\bnn \ba&\int \frac{\phi \n_{\de}  \u_{\de} \otimes \u_{\de} \p_{j}\p_{i}\phi}{\left(\phi+|x-x^{*}|^{\frac{2}{2-\a}}\right)^{\a}} =\int \frac{\n_{\de}|\u_{\de} |^{2}}{\left(\phi+|x-x^{*}|^{\frac{2}{2-\a}}\right)^{ \a}}-\int \frac{\n_{\de}|\u_{\de} \cdot\na \phi|^{2}}{\left(\phi+|x-x^{*}|^{\frac{2}{2-\a}}\right)^{\a}},\ea\enn
which along  with  \eqref{x7}, \eqref{z009}, \eqref{x7a} and  the Schwarz inequality
imply
\be\la{x9}\ba &\int \n_{\de} \u_{\de}\otimes \u_{\de} :\na  \xi\\
&=\int \frac{  \n_{\de}  |\u_{\de}|^{2}}{\left(\phi+|x-x^{*}|^{\frac{2}{2-\a}}\right)^{ \a}} - \a \int \frac{\phi \n_{\de}(\u_{\de}\cdot \na\phi)^{2}}{\left(\phi+|x-x^{*}|^{\frac{2}{2-\a}}\right)^{\a+1}}\\
&\quad - \a\int \frac{\phi\n_{\de}(\u_{\de}\cdot \na |x-x^{*}|^{\frac{2}{2-\a}})(\u_{\de}\cdot\na \phi)}{\left(\phi+|x-x^{*}|^{\frac{2}{2-\a}}\right)^{\a+1}}\\
&\ge (1- \a)\int \frac{  \n_{\de} |\u_{\de}|^{2}}{\left(\phi+|x-x^{*}|^{\frac{2}{2-\a}}\right)^{\a}}
- \a\int \frac{\phi\n_{\de} (\u_{\de}\cdot \na |x-x^{*}|^{\frac{2}{2-\a}})(\u_{\de}\cdot\na \phi)}{\left(\phi+|x-x^{*}|^{\frac{2}{2-\a}}\right)^{\a+1}}\\
&\ge \frac{(1- \a)}{2}\int \frac{\n_{\de} |\u_{\de}|^{2}}{\left(\phi+|x-x^{*}|^{\frac{2}{2-\a}}\right)^{\a}} -C \int \frac{\phi^{2}\n_{\de}|\u_{\de}|^{2}  |x-x^{*}|^{\frac{2\a}{2-\a}}}{\left(\phi+|x-x^{*}|^{\frac{2}{2-\a}}\right)^{\a+2}}\\ &\ge C \int_{\mathcal{O}\cap B_{k_{2}}(x^{*})} \frac{\n_{\de}|\u_{\de}|^{2}}{ |x-x^{*}|^{\a}} -C\|\n_{\de}|\u_{\de}|^{2}\|_{L^{1}}.\ea\ee
Therefore, the inequalities   \eqref{r11}-\eqref{x9} yield that, for some  $C$    independent of $x^{*}$,
\be\la{r5-1}\ba
&\int_{\mathcal{O}\cap B_{k_{2}}(x^{*})} \frac{\left(\de \n_{\de}^{4}+\n_{\de}^{\g}+\n_{\de} |\u_{\de} |^{2}\right)(x)}{|x-x^{*}|^{\a}}dx\\
& \le  C\left( \|\de \n_{\de}^{4}+\n_{\de}^{\g}\|_{L^{1}} +  \|\n_{\de} |\u_{\de}|^{2}  \|_{L^{1}}   \right) \\
& \quad+C\left( \|\na \u_{\de} \|_{L^{2}} +\|\lap Q_{\de} \|_{L^{2}}^{2} +\|\na Q_{\de} \|_{L^{2}}^{2}+\| Q_{\de}  \|_{L^{6}}^{4}+ \|c_{\de} \|_{L^{\infty}}^{3}\right)\\
&\le C\left(1+  \|\n_{\de} |\u_{\de}|^{2}  \|_{L^{s}}+ m_{2}\|\n_{\de}\u_{\de}\|_{L^{1}}^{3C_{1}}\right),\ea\ee
where the last inequality  follows from \eqref{q1a}   and \eqref{1.3ff}.

\medskip

\noindent
{\it Case 2: The interior point case  $x^{*}\in  \mathcal{O}.$}

We set  ${\rm dist}(x^{*},\,\p\mathcal{O})=3r>0$. Define   the smooth cut-off function
 \be\la{cut} \chi(x)= 1\,\,{\rm if}\,\,x\in B_{r}(x^{*}),\quad\chi(x)=0\,\,{\rm if}\,\,
 x\notin B_{2r}(x^{*}),
 \quad |\na \chi(x)|\le 2r^{-1}.\ee
  Multiplying $\eqref{n6}_{3}$ by
 $\frac{x-x^{*}}{|x-x^{*}|^{\a}}\chi^{2}$ yields
  \be\la{r8}\ba&\int \left(\de \n_{\de}^{4}+\n_{\de}^{\g}\right)  \frac{3-\a}{|x-x^{*}|^{\a}}\chi^{2} + \int \n_{\de} \u_{\de}\otimes \u_{\de} :\na  \left(\frac{x-x^{*}}{|x-x^{*}|^{\a}}\chi^{2}\right)\\
&=\int \left(\mathbb{S}_{ns}(\na \u_{\de})+\mathbb{S}_{1}(Q_{\de})+\mathbb{S}_{2}(c_{\de},Q_{\de})\right):\na  \left(\frac{x-x^{*}}{|x-x^{*}|^{\a}}\chi^{2}\right)-\int  \n_{\de} g_{2} \cdot \frac{x-x^{*}}{|x-x^{*}|^{\a}}\chi^{2}\\
&\quad
 -2\int \left(\de \n_{\de}^{4}+\n_{\de}^{\g}\right)  \chi \frac{\na\chi\cdot(x-x^{*})}{|x-x^{*}|^{\a}}.\ea\ee
A simple calculation  shows
\be\la{q2}\ba &\p_{i} \left(\frac{x^{j}-(x^{*})^{j}}{|x-x^{*}|^{\a}}\chi^{2}\right)\\
&= \frac{\p_{i}(x^{j}-(x^{*})^{j})}{|x-x^{*}|^{\a}}\chi^{2} -\a \frac{(x^{j}-(x^{*})^{j})(x^{i}-(x^{*})^{i})}{|x-x^{*}|^{\a+2}}\chi^{2}+2\chi \frac{x^{j}-(x^{*})^{j}}{|x-x^{*}|^{\a}}\p_{i} \chi,
\ea\ee  
thus, for some  constant $C$   independent of $x^{*}$ and $r,$
 \bnn \ba &\int \n_{\de} \u_{\de}\otimes \u_{\de}:\na \left(\frac{x-x^{*}}{|x-x^{*}|^{\a}}\chi^{2}\right)\\
 &\ge(1-\a)\int \frac{\n_{\de} |\u_{\de}|^{2}}{|x-x^{*}|^{\a}}\chi^{2}+2\int  \frac{ \chi\n_{\de} (\u_{\de}\cdot\na \chi )(\u_{\de}\cdot(x-x^{*}))}{|x-x^{*}|^{\a}}\\
 &\ge\frac{1-\a}{2}\int \frac{\n_{\de} |\u_{\de}|^{2}}{|x-x^{*}|^{\a}}\chi^{2}-C \int_{B_{2r}(x^{*})\backslash B_{r}(x^{*})} \frac{\n_{\de}|\u_{\de}|^{2}}{|x-x^{*}|^{\a}},\ea\enn where we have used  \be\la{s30}|\na \chi||x-x^{*}|\le 4,\quad \forall \,\,\,x\in B_{2r}(x^{*})\backslash B_{r}(x^{*}).\ee
Observe from \eqref{cut} and  \eqref{q2} that   $\na  \left(\frac{x-x^{*}}{|x-x^{*}|^{\a}}\chi^{2}\right) \in L^{q}$ for all $q\in [1,\frac{3}{\a}).$  
By the similar argument to \eqref{r13}, one has,  for some constant  $C$   independent of $r$,
 \bnn \ba &\left| \int\left(\mathbb{S}_{ns}(\na \u_{\de})+\mathbb{S}_{1}(Q_{\de})+\mathbb{S}_{2}(c_{\de},Q_{\de})\right):\na  \left(\frac{x-x^{*}}{|x-x^{*}|^{\a}}\chi^{2}\right)\right|\\
 &\quad  +\left|\int  \n_{\de} g_{2} \cdot \frac{x-x^{*}}{|x-x^{*}|^{\a}}\chi^{2} \right|\\
 &\le C \left(1+\|\na \u_{\de} \|_{L^{2}} +\|\lap Q_{\de} \|_{L^{2}}^{2} +\|\na Q_{\de} \|_{L^{2}}^{2}+\| Q_{\de}  \|_{L^{6}}^{4}+ \|c_{\de} \|_{L^{\infty}}^{3}\right)\ea\enn
and
\bnn\ba&\left|-2\int \left(\de \n_{\de}^{4}+\n_{\de}^{\g}\right)  \chi \frac{\na\chi\cdot(x-x^{*})}{|x-x^{*}|^{\a}} \right| \le   C\int_{B_{2r}(x^{*})\backslash B_{r}(x^{*})}
 \frac{\left(\de \n_{\de}^{4}+\n_{\de}^{\g}\right)}{|x-x^{*}|^{\a}},\ea \enn due to \eqref{s30}.
Therefore, taking  the above  inequalities  into accounts, utilizing \eqref{1.3ff},  we deduce from   \eqref{r8} that
  \be\la{r9*}\ba &\int_{B_{r}(x^{*})} \frac{\left(\de \n_{\de}^{4}+\n_{\de}^{\g}+\n_{\de}|\u_{\de}|^{2}\right)(x)}{ |x-x^{*}|^{\a}}dx \\
& \le   C \left(1+\|\na \u_{\de} \|_{L^{2}} +\|\lap Q_{\de} \|_{L^{2}}^{2} +\|\na Q_{\de} \|_{L^{2}}^{2}+\| Q_{\de}  \|_{L^{6}}^{4}+ \|c_{\de} \|_{L^{\infty}}^{3}\right)\\
&\quad + C \int_{B_{2r}(x^{*})\backslash B_{r}(x^{*})} \frac{\left(\de \n_{\de}^{4}+\n_{\de}^{\g}+\n_{\de}|\u_{\de}|^{2}\right)(x)} {|x-x^{*}|^{\a}}dx\\
&\le   C\left(1+  \|\n_{\de} |\u_{\de}|^{2}  \|_{L^{s}}+ m_{2}\|\n_{\de}\u_{\de}\|_{L^{1}}^{3C_{1}}\right)\\
&\quad + C \int_{B_{2r}(x^{*})\backslash B_{r}(x^{*})} \frac{\left(\de \n_{\de}^{4}+\n_{\de}^{\g}+\n_{\de}|\u_{\de}|^{2}\right)(x)} {|x-x^{*}|^{\a}}dx.\ea\ee
It remains   to estimate  the last  term  appeared in \eqref{r9*}.     To this end, we adopt  the ideas in \cite{lw} and  discuss   two cases: $(1)$  $x^{*}$ is far away from the boundary;  $(2)$  $x^{*}$ is close to the boundary.

 (1) Assume   ${\rm dist}(x^{*},\,\p\mathcal{O})= 3r\ge \frac{k_{2}}{2}$  with    $k_{2}$ given in \eqref{x7}.  Then
 \be\la{jj8}\ba  &\int_{B_{2r}(x^{*})\backslash B_{r}(x^{*})} \frac{\left(\de \n_{\de}^{4}+\n_{\de}^{\g}+\n_{\de}|\u_{\de}|^{2}\right)(x)} {|x-x^{*}|^{\a}}dx\\
  &\le  k_{2}^{-\a}   \int_{B_{2r}(x^{*})\backslash B_{r}(x^{*})}  \left(\de \n_{\de}^{4}+\n_{\de}^{\g}+\n_{\de}|\u_{\de}|^{2}\right) \\
  &\le   C\left(1+  \|\n_{\de} |\u_{\de}|^{2}  \|_{L^{s}}+ m_{2}\|\n_{\de}\u_{\de}\|_{L^{1}}^{3C_{1}}\right),\ea\ee
where  the last inequality   follows from   \eqref{q1a}.

(2)  Assume that $x^{*}\in \mathcal{O}$ is close to the boundary.  By \eqref{x7}, we have  \bnn |x^{*}-\tilde{x}^{*}|=dist(x^{*},\,\p\mathcal{O})= 3r<\frac{k_{2}}{2} \quad {\rm with}\,\,\,\,\tilde{x}^{*}\in \p\mathcal{O},\enn and hence,
       \be\la{bb1} 4 |x-x^{*}|\ge |x-\tilde{x}^{*}|,\quad \forall \,\,\, x\notin B_{r}(x^{*}).\ee
 Making use  of \eqref{bb1} and \eqref{r5-1},   we  get
 \be\la{jj9}\ba & C \int_{B_{2r}(x^{*})\backslash B_{r}(x^{*})} \frac{\left(\de \n_{\de}^{4}+\n_{\de}^{\g}+\n_{\de}|\u_{\de}|^{2}\right)(x)} {|x-x^{*}|^{\a}}dx\\
  & \le   C \int_{\mathcal{O}\cap B_{k_{2}}(\tilde{x}^{*})}\frac{\left(\de \n_{\de}^{4}+\n_{\de}^{\g}+\n_{\de}|\u_{\de}|^{2}\right)(x)} {|x-\tilde{x}^{*}|^{\a}}dx\\
 &\le  C\left(1+  \|\n_{\de} |\u_{\de}|^{2}  \|_{L^{s}}+ m_{2}\|\n_{\de}\u_{\de}\|_{L^{1}}^{3C_{1}}\right).\ea\ee

 In summary,
substituting   \eqref{jj8} and \eqref{jj9} back into  \eqref{r9*}  yields
 \be\la{r5-2}\ba  &\int_{B_{r}(x^{*})} \frac{\left(\de \n_{\de}^{4}+\n_{\de}^{\g} +\n_{\de} |\u_{\de}|^{2}\right)(x)}{|x-x^{*}|^{\a}}dx\\
 &\le  C\left(1+  \|\n_{\de} |\u_{\de}|^{2}  \|_{L^{s}}+ m_{2}\|\n_{\de}\u_{\de}\|_{L^{1}}^{3C_{1}}\right),\ea\ee where the constant $C$ is   independent of $x^{*}$.
The combination of \eqref{r5-1} with  \eqref{r5-2}  yoelds the desired estimate \eqref{r5-3}.
    \end{proof}

\begin{lemma} \la{lem2.3x} 
Assume that $\u\in H_{0}^{1}(\mathcal{O},\r)$ and $f(x)\ge0$ a.e. in $\mathcal{O}.$
Then there is a constant $C$   depending only on $|\mathcal{O}|$  such that
\be\la{y18} \int_{\mathcal{O}}|\u|^{2} fdx\le C \|\na \u\|_{H_{0}^{1}(\mathcal{O})}^{2}\int_{\mathcal{O}}\frac{f(x)}{|x-x^{*}|}dx,\ee
as long as the   right-hand side quantity is finite.
\end{lemma}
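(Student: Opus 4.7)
The plan is to prove this via a Hardy-type inequality along rays from $x^*$. By a standard density argument I may reduce to $\u\in C_{c}^{\infty}(\mathcal{O})$, and introduce polar coordinates $(r,\omega)\in(0,\infty)\times S^{2}$ centered at $x^{*}$, writing $x=x^{*}+r\omega$ and letting $L(\omega)$ denote the exit distance from $x^{*}$ to $\partial\mathcal{O}$ along direction $\omega$. Since $\u$ vanishes on $\partial\mathcal{O}$, the fundamental theorem of calculus along each ray gives
\[
\u(x^{*}+r\omega)=-\int_{r}^{L(\omega)}\partial_{s}\u(x^{*}+s\omega)\,ds.
\]

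Next, I would apply the Cauchy--Schwarz inequality with the weight $s^{-2}$ to extract the pointwise bound
\[
|\u(x^{*}+r\omega)|^{2}\le\left(\int_{r}^{L(\omega)}\frac{ds}{s^{2}}\right)\left(\int_{r}^{L(\omega)} s^{2}|\nabla\u|^{2}\,ds\right)\le\frac{1}{r}\int_{r}^{L(\omega)} s^{2}|\nabla\u|^{2}\,ds.
\]
Substituting into $\int_{\mathcal{O}}f|\u|^{2}dx=\int_{S^{2}}\int_{0}^{L(\omega)}f(x^{*}+r\omega)|\u|^{2}r^{2}\,dr\,d\omega$ and swapping the order of integration on $\{0\le r\le s\le L(\omega)\}$ via Fubini, I obtain
\[
\int_{\mathcal{O}}f|\u|^{2}\,dx\le\int_{S^{2}}\int_{0}^{L(\omega)}s^{2}|\nabla\u(x^{*}+s\omega)|^{2}\left(\int_{0}^{s}f(x^{*}+r\omega)\,r\,dr\right)ds\,d\omega.
\]

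The key bookkeeping is that the $s^{2}\,ds\,d\omega$ factor is the volume element, so that the $s$--$\omega$ integral of $|\nabla\u|^{2}$ will reassemble as $\|\nabla\u\|_{L^{2}}^{2}$, while the inner integral $\int f\,r\,dr$, once integrated over $\omega$, yields exactly $\int_{\mathcal{O}}f/|x-x^{*}|\,dx$. Bounding $\int_{0}^{s}f\,r\,dr\le\int_{0}^{L(\omega)}f\,r\,dr=:F(\omega)$ reduces matters to controlling the cross product $\int_{S^{2}}F(\omega)G(\omega)\,d\omega$, where $G(\omega):=\int_{0}^{L(\omega)}s^{2}|\nabla\u|^{2}ds$, by a constant (depending only on $|\mathcal{O}|$) times the product $\bigl(\int_{S^{2}}F\,d\omega\bigr)\bigl(\int_{S^{2}}G\,d\omega\bigr)$.

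This last factorization is where I expect the main obstacle: the naive inequality $\int FG\le(\int F)(\int G)$ fails in general, so the argument must exploit the boundedness of $\mathcal{O}$ in an essential way. My plan is to use $L(\omega)\le\mathrm{diam}(\mathcal{O})$ together with either a Schur-type test on the associated integral kernel or an auxiliary Cauchy--Schwarz argument tailored to the ray-integral structure, possibly via a preliminary estimate of the form $\sup_{\omega}F(\omega)\le C(|\mathcal{O}|)\int_{S^{2}}F\,d\omega$ that is allowed by the extra $s$-range provided by $\int_{0}^{s}$. The resulting constant $C$ depending only on $|\mathcal{O}|$ then closes the estimate and yields \eqref{y18}.
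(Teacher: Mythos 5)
Your computation up to the final display is essentially correct (modulo the need to justify the ray integral in a non--star-shaped domain, which you could fix by extending $\u$ by zero to a ball containing $\mathcal{O}$), and you are right to flag the last step as the genuine difficulty. Unfortunately it is not merely a difficulty but an impossibility: the factorization $\int_{S^{2}}F(\omega)G(\omega)\,d\omega\lesssim\bigl(\int_{S^{2}}F\bigr)\bigl(\int_{S^{2}}G\bigr)$ is false, and so is your proposed substitute $\sup_{\omega}F(\omega)\le C(|\mathcal{O}|)\int_{S^{2}}F\,d\omega$. To see this, take $f$ to be the indicator of a thin tube $\{x^{*}+r\omega:\ 1\le r\le 2,\ |\omega-\omega_{0}|\le\varepsilon\}$. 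Then $F(\omega_{0})=\int_{1}^{2}r\,dr=\tfrac{3}{2}$ is bounded away from zero independently of $\varepsilon$, while $\int_{S^{2}}F\,d\omega\sim\varepsilon^{2}$, and even $\sup_{y}\int_{\mathcal{O}}f(x)/|x-y|\,dx\sim\varepsilon^{2}\log(1/\varepsilon)$ tends to zero. So the ray-by-ray Hardy argument irretrievably loses the angular smallness of $f$. The same type of example (combined with a mild singularity of $\u$ at a point $x_{0}\ne x^{*}$) shows that \eqref{y18} is in fact false if $x^{*}$ is literally fixed; the lemma must be read, as it is applied in \eqref{y20}, with $\sup_{x^{*}\in\overline{\mathcal{O}}}$ on the right-hand side, and any valid proof must use the full three-dimensional integral $\int f/|x-x^{*}|$ rather than single ray integrals.

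The paper's intended proof (following the cited \cite[Lemma 4]{pw}) is of a different nature. Introduce the Newtonian potential $v(x)=\frac{1}{4\pi}\int_{\mathcal{O}}\frac{f(y)}{|x-y|}\,dy$, so that $-\Delta v=f$ in $\mathcal{O}$ and $\|v\|_{L^{\infty}}=\frac{1}{4\pi}\sup_{x^{*}}\int_{\mathcal{O}} f(x)/|x-x^{*}|\,dx$. Set $g=|\u|\in H^{1}_{0}(\mathcal{O})$ with $|\nabla g|\le|\nabla\u|$ a.e. Integrating by parts (all boundary terms vanish because $g=0$ on $\partial\mathcal{O}$) gives
\[
\int_{\mathcal{O}} f g^{2}=\int_{\mathcal{O}}\nabla v\cdot\nabla(g^{2})=2\int_{\mathcal{O}} g\,\nabla g\cdot\nabla v,
\qquad
\int_{\mathcal{O}} g^{2}|\nabla v|^{2}=-2\int_{\mathcal{O}} v\,g\,\nabla g\cdot\nabla v+\int_{\mathcal{O}} v f g^{2}.
\]
Applying Cauchy--Schwarz to both right-hand sides and absorbing $\int g^{2}|\nabla v|^{2}$ yields $\int g^{2}|\nabla v|^{2}\le C\|v\|_{L^{\infty}}^{2}\|\nabla g\|_{L^{2}}^{2}$ and then $\int f g^{2}\le C\|v\|_{L^{\infty}}\|\nabla g\|_{L^{2}}^{2}$, which is exactly \eqref{y18}. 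The essential gain over your approach is that $\|v\|_{L^{\infty}}$ packages the three-dimensional potential of $f$ seen from every base point simultaneously, precisely the information the thin-tube example shows is lost in a one-dimensional Hardy inequality along rays from a single point.
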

\begin{proof} The   proof is based on     the Green representation and integration by parts; see, e.g.,  \cite[Lemma 4]{pw}. \end{proof}
\begin{lemma}\la{lem5.2k} Let $\th  =\frac{\g-1}{2\g}\in (0,\, \frac{1}{2}).$ Then,
 \be\ba\la{q1ax}\int \n_{\de} |\u_{\de}|^{2(2-\th)}\le  C.\ea\ee\end{lemma}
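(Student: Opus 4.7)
\textbf{Proof plan for Lemma \ref{lem5.2k}.}
The plan is to combine the Hardy-type inequality of Lemma \ref{lem2.3x} with the weighted pressure/kinetic energy estimate of Lemma \ref{c1}, together with Lemmas \ref{lem5.2x} and \ref{lem5.1}, and close a bootstrap inequality for $Y := \int \n_{\de}|\u_{\de}|^{2(2-\th)}$. First, I would apply Lemma \ref{lem2.3x} with the nonnegative weight $f = \n_{\de}|\u_{\de}|^{2(1-\th)}$, obtaining, for any $x^{*}\in\overline{\mathcal{O}}$,
\[
Y = \int |\u_{\de}|^{2}\cdot \n_{\de}|\u_{\de}|^{2(1-\th)}\, dx \le C\,\|\na \u_{\de}\|_{L^{2}}^{2}\int \frac{\n_{\de}|\u_{\de}|^{2(1-\th)}}{|x-x^{*}|}\,dx.
\]

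Next, I would estimate the weighted integral by factoring $\n_{\de}|\u_{\de}|^{2(1-\th)} = (\n_{\de}|\u_{\de}|^{2})^{1-\th}\n_{\de}^{\th}$ and splitting $|x-x^{*}|^{-1} = |x-x^{*}|^{-\a(1-\th)}|x-x^{*}|^{-\beta\th}$ with $\a(1-\th) + \beta\th = 1$, choosing $\a\in(0,1)$ close to $1$ so that $\beta>1$ is correspondingly close to $1$. H\"older's inequality with conjugate exponents $1/(1-\th)$ and $1/\th$ yields
\[
\int \frac{\n_{\de}|\u_{\de}|^{2(1-\th)}}{|x-x^{*}|}\,dx \le \Bigl(\int \frac{\n_{\de}|\u_{\de}|^{2}}{|x-x^{*}|^{\a}}\,dx\Bigr)^{1-\th}\Bigl(\int \frac{\n_{\de}}{|x-x^{*}|^{\beta}}\,dx\Bigr)^{\th}.
\]
The first factor is bounded uniformly in $x^{*}$ by Lemma \ref{c1}. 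For the second, one more H\"older with exponents $(\g,\g/(\g-1))$ gives
\[
\int \frac{\n_{\de}}{|x-x^{*}|^{\beta}}\,dx \le \Bigl(\int \frac{\n_{\de}^{\g}}{|x-x^{*}|^{\a}}\,dx\Bigr)^{1/\g}\Bigl(\int_{\mathcal{O}} |x-x^{*}|^{-(\beta-\a/\g)\g/(\g-1)}\,dx\Bigr)^{(\g-1)/\g},
\]
where the first factor is again controlled by Lemma \ref{c1} and the pure power integral is finite uniformly in $x^{*}$ provided $(\beta-\a/\g)\g/(\g-1)<3$. Since both $\a$ and $\beta$ tend to $1$ as $\a\nearrow 1$, and $\g>1$, such an admissible choice of $\a$ exists for every $\g>1$.

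Combining the previous displays produces
\[
Y \le C\,\|\na \u_{\de}\|_{L^{2}}^{2}\bigl(1+ \|\n_{\de}|\u_{\de}|^{2}\|_{L^{s}} + m_{2}\|\n_{\de}\u_{\de}\|_{L^{1}}^{3C_{1}}\bigr)^{1-\th+\th/\g}.
\]
To close the bootstrap I would interpolate: by H\"older $\|\n_{\de}\u_{\de}\|_{L^{1}}\le m_{1}^{1/2}\|\n_{\de}|\u_{\de}|^{2}\|_{L^{1}}^{1/2}$ and $\|\n_{\de}|\u_{\de}|^{2}\|_{L^{1}}\le m_{1}^{(1-\th)/(2-\th)}Y^{1/(2-\th)}$; a similar interpolation against the $L^{s}$-bound on $\n_{\de}^{\g}$ from Lemma \ref{lem5.1} controls $\|\n_{\de}|\u_{\de}|^{2}\|_{L^{s}}$ in terms of $Y$; and Lemma \ref{lem5.2x} controls $\|\na\u_{\de}\|_{L^{2}}^{2}$ in terms of $\|\n_{\de}\u_{\de}\|_{L^{1}}$. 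Plugging these bounds back into the previous display yields a self-inequality of the form $Y\le C(1+Y^{\ka})$ with some $\ka<1$, the smallness of $m_{2}$ being used to absorb any terms involving $m_{2}\|\n_{\de}\u_{\de}\|_{L^{1}}^{3C_{1}}$. Young's inequality then gives $Y\le C$, i.e.\ \eqref{q1ax}.

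\emph{Main obstacle.} The delicate point is the parameter bookkeeping. One must verify that the three H\"older decompositions can be combined with compatible exponent choices so that (i) the pure power integral $\int|x-x^{*}|^{-(\beta-\a/\g)\g/(\g-1)}\,dx$ is uniformly finite in $x^{*}$ for every $\g>1$, and (ii) after inserting the interpolation bounds for $\|\n_{\de}\u_{\de}\|_{L^{1}}$, $\|\n_{\de}|\u_{\de}|^{2}\|_{L^{s}}$ and $\|\na\u_{\de}\|_{L^{2}}^{2}$, the resulting self-inequality for $Y$ is strictly sublinear. The precise value $\th=(\g-1)/(2\g)$ is tuned so that the closing exponent $\ka$ stays $<1$ throughout the whole range $\g>1$; any larger value of $\th$ would fail the Hardy step, while any smaller one would break the sublinearity of the closing inequality.
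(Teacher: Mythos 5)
Your plan reproduces the paper's argument almost step by step: apply the Hardy-type inequality of Lemma \ref{lem2.3x} with the weight $f=\n_{\de}|\u_{\de}|^{2(1-\th)}$, split $\n_{\de}|\u_{\de}|^{2(1-\th)}/|x-x^{*}|$ into a $(\n_{\de}|\u_{\de}|^{2}/|x-x^{*}|^{\a})$-factor, a $(\n_{\de}^{\g}/|x-x^{*}|^{\a})$-factor, and an integrable pure-power factor (your two nested H\"older steps are algebraically identical to the paper's single three-factor identity \eqref{s40}), bound the first two via Lemma \ref{c1}, and close a sub-linear self-inequality for $\mathbb{A}=\int\n_{\de}|\u_{\de}|^{2(2-\th)}$ using $m_{2}$ small, exactly as in \eqref{y20} and the final display of the paper's proof. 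The only substantive deviation is at the last interpolation: you propose to bound $\|\n_{\de}|\u_{\de}|^{2}\|_{L^{s}}$ by interpolating against the $L^{\g s}$ bound on $\n_{\de}$ from Lemma \ref{lem5.1} (which carries $\|\n_{\de}|\u_{\de}|^{2}\|_{L^{s}}$ on its own right-hand side and so requires one extra absorption), whereas the paper's \eqref{y9} interpolates $\|\n_{\de}|\u_{\de}|^{2}\|_{L^{3/2}}$ directly against $\mathbb{A}$ and $\|\n_{\de}\|_{L^{1}}=m_{1}$ in one step; both routes close, and the exponent bookkeeping you flag as the main obstacle is precisely what the paper verifies explicitly via the condition $\a+\g(1-\a)/((\g-1)\th)\in(0,3)$ and the sub-linearity of $3/(2(2-\th))$ for $\th\in(0,\frac12)$.
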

\begin{proof}
Denote by
\bnn  \mathbb{A}=\int \n_{\de}  |\u_{\de}|^{2(2-\th)}.\enn
Noting  $\|\n_{\de} \|_{L^{1}}=m_{1},$ we have
\be\la{y8} \|\n_{\de} \u_{\de} \|_{L^{1}}\le \|\n_{\de}  |\u_{\de}|^{2(2-\th)}\|_{L^{1}}^{\frac{1}{2(2-\th)}}\|\n_{\de} \|_{L^{1}}^{\frac{3-2\th}{2(2-\th)}}\le C \mathbb{A}^{\frac{1}{2(2-\th)}}, \ee
 and
  \be\la{y9} \|\n_{\de} |\u_{\de} |^{2}\|_{L^{\frac{3}{2}}}\le
   \|\n_{\de}   |\u_{\de}|^{2(2-\th)}\|_{L^{1}}^{\frac{1}{2-\th}}
   \|\n_{\de}\|_{L^{1}}^{\frac{(1-\th)}{2-\th}}\le C\mathbb{A}^{\frac{1}{2-\th}}.\ee
Thanks  to  \eqref{y8}, it follows from \eqref{1.3ff} that
\be\la{r1s} \ba &\|c_{\de}\|_{L^{\infty}}^{6}+ \int \left(|\na \u_{\de}|^{2}  +  |\lap Q_{\de}|^{2}+   |\na Q_{\de}|^{2}
+    |Q_{\de}|^{6} \right)\\
 &\le  C\left(1+\mathbb{A}^{\frac{1}{2(2-\th)}}+ m_{2}\mathbb{A} ^{\frac{3C_{1}}{2(2-\th)}}\right).\ea\ee
A direct  calculation shows
\be\la{s40}  \frac{\n_{\de}  |\u_{\de}|^{2(1-\th)}}{|x-x^{*}|}
 =\left(\frac{\n_{\de} |\u_{\de}|^{2}}{|x-x^{*}|^{\a}}\right)^{1-\th} \left(\frac{\n_{\de}^{\g}}{|x-x^{*}|^{\a}}\right)^{\frac{\th}{\g}}
 \left(\frac{1}{|x-x^{*}|^{\a+\frac{\g(1-\a)}{(\g-1)\th}}}\right)^{\frac{(\g-1)\th}{\g}}.\ee
Noting that $\th=\frac{\g-1}{2\g},$ we  have   $\a+\frac{\g(1-\a)}{(\g-1)\th}\in (0,3)$ if   $\a\in (\frac{2\g-1}{\g^{2}},1)$, thus
 \bnn \int \frac{1}{|x-x^{*}|^{\a+\frac{\g(1-\a)}{(\g-1)\th}}} dx\le C.\enn
Therefore,  by  \eqref{r5-3}, \eqref{y8}, \eqref{y9}, we integrate \eqref{s40} to obtain
\be\la{y20}\ba \int \frac{\n_{\de}  |\u_{\de}|^{2(1-\th)}(x)}{|x-x^{*}|}dx
 &\le C+\int\frac{\n_{\de} |\u_{\de}|^{2}(x)}{|x-x^{*}|^{\a}}dx+\int\frac{\n_{\de}^{\g}(x)}{|x-x^{*}|^{\a}}dx\\
&\le C+C\int  \frac{\left(\de \n_{\de}^{4} +\n_{\de}^{\gamma} +\n_{\de}|\u_{\de}|^{2}\right)(x)}{|x-x^{*}|^{\a}}dx\\
&\le C\left(1+ \mathbb{A}^{\frac{1}{(2-\th)}}+  m_{2} \mathbb{A} ^{\frac{3C_{1}}{2(2-\th)}}\right).\ea\ee
From the definition of $ \mathbb{A}$,  \eqref{r1s} and  Lemma \ref{lem2.3x}, we obtain
\bnn\ba  \mathbb{A}&\le \|\na u\|_{L^{2}}^{2}\sup_{x^{*}\in \overline{\mathcal{O}}}
\int \frac{\n_{\de}  |\u_{\de}|^{2(1-\th)}(x)}{|x-x^{*}|}dx\\&
\le  C \left(1+\mathbb{A}^{\frac{1}{2(2-\th)}}+ m_{2}\mathbb{A} ^{\frac{3C_{1}}{2(2-\th)}}\right) \sup_{x^{*}\in \overline{\mathcal{O}}}
\int \frac{\n_{\de}  |\u_{\de}|^{2(1-\th)}(x)}{|x-x^{*}|}dx,\ea\enn
which together with  \eqref{y20} implies
 \bnn \ba\mathbb{A}
 \le 1+C\mathbb{A}^{\frac{3}{2(2-\th)}}+m_{2}\mathbb{A} ^{\frac{3C_{1}}{(2-\th)}}.\ea\enn
Since $\th\in (0,\frac{1}{2})$, we choose $m_{2}\le 1$ sufficiently small  to   conclude \eqref{q1ax}. The proof of Lemma \ref{lem5.2} is completed.
\end{proof}



Finally,  Proposition \ref{p2}   is a direct consequence of  Lemmas \ref{lem5.2x}-\ref{lem5.2k}.

\subsection{Vanishing artificial pressure}

Now we take the limit as $\delta\to 0$ in the spirit of \cite{novo, novo1, chen}.
Thanks to  \eqref{ss1},  the following estimate follows similarly to  \eqref{a26a}:
\be\la{a26as} \|\na c_{\de}\|_{L^{2}} +\|\lap c_{\de}\|_{L^{2}}  \le C.\ee
With \eqref{ss1} and \eqref{a26as} in hand,  we are allowed to  take the  following limits as $\de\to 0$,  subject  to a subsequence,
\be\la{6b10} (\na \u_{\de},\,\na^{2} c_{\de},\,\na^{2} Q_{\de})\rightharpoonup  (\na u,\,\na^{2} c,\,\,\na^{2}Q)\,\,{\rm in}\,\, L^{2},  \ee
\be\la{6b11}  \u_{\de}  \rightarrow   \u\,\,\,{\rm in}\quad L^{p_{1}},  \quad (c_{\de},\, Q_{\de}) \rightarrow    (c,\,Q)\,\,\,\,{\rm in}\,\,\,\, W^{1,p_{1}}\,\,\,\,(1\le p_{1}<6),\ee
\be\la{6b15}   \de \n_{\de}^{4} \rightarrow 0\,\,\,\, {\rm in}\,\,\,\,\mathcal{D}',  \quad \n_{\de}  \rightharpoonup \n \,\,{\rm in}\,\, L^{\g s},\quad {\rm for\,\,\,all}\,\,\,s\in (1,\frac{3}{2}).\ee
As $\g>1$, we can choose $s\in (1,\frac{3}{2})$  such that $\g s>\frac{3}{2}$. Then,   from   \eqref{6b11}-\eqref{6b15} one has
\be\la{6b14a}  \n_{\de}  \u_{\de} \rightharpoonup  \n \u,\quad {\rm in}\,\,\,  L^{p_{2}}\quad{\rm for\,\,\,some} \quad p_{2}>\frac{6}{5},\ee
and from   \eqref{6b11}-\eqref{6b14a},  for some $p_{3}>1$,
\be\la{6b14}  \left\{\ba&\n_{\de}  \u_{\de}^{i}\u_{\de}^{j} \rightharpoonup  \n \u^{i}\u^{j},\\
&Q_{\de}^{ik}  \lap Q_{\de}^{kj} \rightharpoonup Q^{ik}  \lap Q^{kj},\\
 &Q_{\de}^{ik}  (\p_{k}\u_{\de}^{j}-\p_{j}\u_{\de}^{k}) \rightharpoonup Q^{ik}   (\p_{k}\u^{j}-\p_{j}\u^{k}),\ea\right. 
 \ee
 in  $L^{p_{3}}$. 
Using \eqref{6b10}-\eqref{6b14},  we  take $\de$-limit  in \eqref{n6}  and obtain the  equations in the sense of distributions:
 \begin{equation}\label{n7b}
\left\{\ba &\div (\n \u)=0,\\
& \u\cdot \na c-\lap c=g_{1}, \\
&\div(\n \u\otimes \u)+\na   \overline{\n^{\g}}
 -\div\left(\mathbb{S}_{ns}(\na\u)+ \mathbb{S}_{1}(Q)+\mathbb{S}_{2}(c,Q) \right)=\n  g_{2},\\
& \u\cdot \na Q+Q\o-\o Q +c_{*}Qtr(Q^{2})+\frac{(c-c_{*})}{2}Q-b\left(Q^{2}-\frac{1}{3}tr(Q^{2})
 \mathbb{I}\right)-\lap Q=g_{3},\\
 &\u=0,\,\,\,\frac{\p c}{\p n}=0,\,\,\,\frac{\p Q}{\p n}=0,\quad {\rm on} \,\,\,\p\mathcal{O}.\ea \right.
\end{equation}
Additionally, \eqref{0r} follows from
   \eqref{a19}-\eqref{a19a}, \eqref{6b11},  \eqref{6b15}  and \eqref{s32} below.

 \bigskip

Next,  we
define   an increasing and concave  function $T_{k}(z)\in C_{0}^{1}([0,\infty))$,  satisfying  \be\la{7.0}  T_{k}(z)=z\,\,\,{\rm if}\,\,  z\le k,\quad
T_{k}(z)=k+1\,\,\,{\rm if}\,\,  z\ge k+1.\ee
Clearly, for any $1\le p\le \infty,$
 \be\la{9.1} T_{k}(\n_{\de})\rightharpoonup \overline{T_{k}(\n)}\quad {\rm in}\quad L^{p}.\ee

 \begin{lemma}\la{lem5.4} Let  $(\n_{\de},\u_{\de},c_{\de},Q_{\de})$ be the solution   obtained   in Theorem \ref{t4.1}.  Then,
\be\la{7.1} \ba
& \lim_{\de\rightarrow0}\int T_{k}(\n_{\de})\left( \n_{\de}^{\g} -(2\mu+\lambda)  \div \u_{\de}\right) =
\int \overline{T_{k}(\n)}\left( \overline{\n^{\g}} -(2\mu+\lambda)\div \u \right),\ea\ee
where $T_{k}$ is defined in \eqref{7.0}.
 \end{lemma}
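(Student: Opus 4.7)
The strategy mimics the argument of Lemma \ref{lem4.3}, but replaces the test functions $\phi\p_{i}\lap^{-1}(\n_{\ep})$ and $\phi\p_{i}\lap^{-1}(\n)$ by $\phi\p_{i}\lap^{-1}(T_{k}(\n_{\de}))$ and $\phi\p_{i}\lap^{-1}(\overline{T_{k}(\n)})$, respectively. Concretely, I would multiply the $\de$-momentum equation $\eqref{n6}_{3}$ by $\phi\p_{i}\lap^{-1}(T_{k}(\n_{\de}))$ and the limit equation $\eqref{n7b}_{3}$ by $\phi\p_{i}\lap^{-1}(\overline{T_{k}(\n)})$, where $\phi\in C_{0}^{\infty}(\mathcal{O})$ and $T_{k}(\n_{\de})$ is extended by zero outside $\mathcal{O}$. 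Invoking the renormalized continuity equation for $T_{k}(\n_{\de})$ supplied by Lemma \ref{lem4.2} to handle the convective term, one then obtains two identities strictly parallel to \eqref{4.3} and \eqref{4.5}, whose left-hand sides are
\[
\int \phi\,T_{k}(\n_{\de})\bigl(\de\n_{\de}^{4}+\n_{\de}^{\g}-(2\mu+\lambda)\div\u_{\de}\bigr)\quad \text{and}\quad \int \phi\,\overline{T_{k}(\n)}\bigl(\overline{\n^{\g}}-(2\mu+\lambda)\div\u\bigr).
\]

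It then remains to pass to the $\de$-limit in each of the eleven right-hand side integrals. Terms carrying a derivative of the cut-off $\phi$ converge at once: since $T_{k}$ is bounded, $T_{k}(\n_{\de})\rightharpoonup \overline{T_{k}(\n)}$ in every $L^{p}$, and \eqref{e2} upgrades this to strong convergence of $\p_{i}\lap^{-1}(T_{k}(\n_{\de}))$ in every $L^{q}$; this, together with the strong convergences \eqref{6b11} for $(\u_{\de},Q_{\de},c_{\de})$, closes all such terms. The $Q$-tensor contribution involving $Q_{\de}^{ik}\lap Q_{\de}^{kj}-\lap Q_{\de}^{ik}Q_{\de}^{kj}$ is treated by the symmetrisation-and-integration-by-parts trick of \eqref{k4}, which converts the apparent product of two weakly convergent factors into an integrand involving only $\na Q_{\de}$ (strongly convergent in $L^{2}$ by \eqref{6b11}) paired with the strongly convergent $\p_{i}\lap^{-1}(T_{k}(\n_{\de}))$. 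The artificial pressure contribution $\int \phi\, T_{k}(\n_{\de})\,\de\n_{\de}^{4}$ vanishes because $T_{k}(\n_{\de})$ is uniformly bounded while $\|\de\n_{\de}^{4}\|_{L^{5/4}}\le \de^{1/5}\|\de\n_{\de}^{5}\|_{L^{1}}^{4/5}\to 0$ by Lemma \ref{lem4.1} and weak lower semicontinuity inherited from the $\ep$-limit.

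The main obstacle is the convective commutator
\[
\int \phi\,\u_{\de}^{i}\Bigl[T_{k}(\n_{\de})\,\p_{i}\p_{j}\lap^{-1}(\n_{\de}\u_{\de}^{j})-\n_{\de}\u_{\de}^{j}\,\p_{i}\p_{j}\lap^{-1}(T_{k}(\n_{\de}))\Bigr],
\]
which I plan to handle via the div-curl lemma (Lemma \ref{lem4.4}) applied with $v_{\de}=T_{k}(\n_{\de})$ (uniformly bounded in $L^{\infty}$) and $w_{\de}=\n_{\de}\u_{\de}^{j}$ (bounded in $L^{p_{2}}$ for some $p_{2}>6/5$ by \eqref{6b14a}). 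Lemma \ref{lem4.4} then delivers the weak convergence
\[
T_{k}(\n_{\de})\p_{i}\p_{j}\lap^{-1}(\n_{\de}\u_{\de}^{j})-\n_{\de}\u_{\de}^{j}\,\p_{i}\p_{j}\lap^{-1}(T_{k}(\n_{\de}))\rightharpoonup \overline{T_{k}(\n)}\,\p_{i}\p_{j}\lap^{-1}(\n\u^{j})-\n\u^{j}\,\p_{i}\p_{j}\lap^{-1}(\overline{T_{k}(\n)})
\]
in some $L^{r}$ with $r>6/5$, which when paired against the strong convergence $\u_{\de}\to\u$ in $L^{p_{1}}$ ($p_{1}<6$) from \eqref{6b11} produces the desired convergence of this term. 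Subtracting the two tested identities and finally letting $\phi$ tend to $\mathbf{1}_{\mathcal{O}}$ yields \eqref{7.1}.
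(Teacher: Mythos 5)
Your proposal is correct and follows exactly the route the paper intends: the paper's own ``proof'' is a one-line pointer to Lemma \ref{lem4.3}, and you have spelled out the right modification, namely test $\eqref{n6}_{3}$ with $\phi\p_{i}\lap^{-1}(T_{k}(\n_{\de}))$ and the limit equation with $\phi\p_{i}\lap^{-1}(\overline{T_{k}(\n)})$, exploit the $L^{\infty}$-bound on $T_{k}(\n_{\de})$ (so that $\p_{i}\lap^{-1}(T_{k}(\n_{\de}))$ converges strongly in every $L^{q}$), kill the problematic $Q$-term by the symmetrisation of \eqref{k4}, and handle the convective commutator via Lemma \ref{lem4.4} with $v_{\de}=T_{k}(\n_{\de})$, $w_{\de}=\n_{\de}\u_{\de}^{j}$, the remainder $\int T_{k}(\n_{\de})\u_{\de}^{i}\phi\,\p_{i}\lap^{-1}(\div(\n_{\de}\u_{\de}))$ vanishing identically since $\div(\n_{\de}\u_{\de})=0$ in $\mathcal{D}'(\r)$.

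One small caveat: your justification that $\int\phi\,T_{k}(\n_{\de})\,\de\n_{\de}^{4}\to 0$ leans on a $\de$-uniform bound on $\|\de\n_{\de}^{5}\|_{L^{1}}$ ``by Lemma \ref{lem4.1}'', but the constant in Lemma \ref{lem4.1} is only claimed $\ep$-independent and in fact depends on $\de$ through the Young-inequality absorptions in its proof, so this chain is not immediate. The clean way is to invoke the $\de$-uniform estimate of Proposition \ref{p2}, $\|\de\n_{\de}^{4}+\n_{\de}^{\g}\|_{L^{s}}\le C$ for some $s>1$, together with $\|\n_{\de}\|_{L^{1}}=m_{1}$ and interpolation, which gives $\de\|\n_{\de}\|_{L^{4}}^{4}\le C\de^{1-\theta}\to 0$ for some $\theta\in(0,1)$; or simply quote \eqref{6b15}, where $\de\n_{\de}^{4}\to 0$ in $\mathcal{D}'$ is already recorded. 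This is a local imprecision, not a gap in the approach.
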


\begin{proof} With the help of \eqref{6b10}-\eqref{6b14},  we   may slightly modify the argument   of  Lemma \ref{lem4.3} to complete the proof of Lemma \ref{lem5.4}.  The detail is omitted  here.
\end{proof}

Since $T_{k}$ is concave, one has
\bnn  (\n_{\de}^{\g}-\n^{\g})\left(T_{k}(\n_{\de})-T_{k}(\n)\right)\ge \left(T_{k}(\n_{\de})-T_{k}(\n)\right)^{\g+1}.\enn
Then,  from \eqref{7.1}, \eqref{6b15},    $\overline{\n^{\g}}\ge \n^{\g}$, and  $T_{k}(\n)\le \overline{T_{k}(\n)},$ we obtain
 \be\ba\la{7.12} &(2\mu+\lambda) \lim_{\de\rightarrow0}\int \left(T_{k}(\n_{\de})\div \u_{\de}-\overline{T_{k}(\n)}\div \u\right)\\
  &=\lim_{\de\rightarrow0}\int \left( T_{k}(\n_{\de})\n_{\de}^{\g}  -\overline{T_{k}(\n)}\,\,\overline{\n^{\g}} \right)\\
  &= \lim_{\de\rightarrow 0}\int  (\n_{\de}^{\g} -\n^{\g}  )\left(T_{k}(\n_{\de})-T_{k}(\n)\right)+ \int (\overline{\n^{\g}}-\n^{\g})\left(T_{k}(\n)-\overline{T_{k}(\n)}\right) \\
  &\ge  \lim_{\de\rightarrow 0}\int (\n_{\de}^{\g} -\n^{\g})\left(T_{k}(\n_{\de})-T_{k}(\n)\right)\\
  &\ge \lim_{\de\rightarrow 0}\int \left(T_{k}(\n_{\de})-T_{k}(\n)\right)^{\g+1}.\ea\ee
Noticing that  $\div\u_{\de}\in L^{2}$ is  bounded uniformly in $\de$, and
\bnn \lim_{\de\rightarrow0}  \int  \left(T_{k}(\n_{\de})\right)^{2}\ge \int  \left(\overline{T_{k}(\n)} \right)^{2},\enn
one has
\be\ba\la{8.4} &2C\lim_{\de\rightarrow0} \| T_{k}(\n_{\de}) - T_{k}(\n)  \|_{L^{2}}\\
&\ge C\lim_{\de\rightarrow0}\left( \| T_{k}(\n_{\de}) - T_{k}(\n)  \|_{L^{2}}+\| T_{k}(\n) -\overline{T_{k}(\n)} \|_{L^{2}}\right)\\
 &\ge (2\mu+\lambda)\lim_{\de\rightarrow0} \int \left(T_{k}(\n_{\de})-T_{k}(\n) + T_{k}(\n) -\overline{T_{k}(\n)}\right)\div \u_{\de}\\
&=(2\mu+\lambda)\lim_{\de\rightarrow0}\int \left(T_{k}(\n_{\de})- \overline{T_{k}(\n)}\right)\div \u_{\de}\\
&=(2\mu+\lambda)\lim_{\de\rightarrow0}\int \left(T_{k}(\n_{\de})\div \u_{\de}-\overline{T_{k}(\n)}\div \u\right).\ea\ee
In terms of  \eqref{7.12} and \eqref{8.4},   it holds that
\be\la{s33} \lim_{\de\rightarrow 0}\|T_{k}(\n_{\de})-T_{k}(\n)\|_{L^{\g+1}} \le C,\ee where the constant $C$   is independent of $k$ and $\de.$

We remark that  \eqref{s33} measures     oscillation of the density, which helps us   prove   that $\eqref{n7b}_{1}$     holds in the sense of renormalized solutions as in \cite{novo}. 

\begin{lemma}{\rm \cite{novo}}\la{lem5.2} 
For the solution $(\n,\u)$, 
\be\la{wq1} {\rm div}(b(\n) \u) +(b'(\n)\n-b(\n))\div \u=0\quad {\rm in}\quad \mathcal{D}'(\mathbb{R}^{3}),\ee
where $b(z)=z$, or  $b\in C^{1}([0,\infty))$ with $b'(z)=0$ for large $z$.
  \end{lemma}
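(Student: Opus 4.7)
The plan is to follow the classical DiPerna--Lions/Feireisl renormalization strategy, adapted to the stationary setting, with the key input being the uniform bound \eqref{s33} on the oscillation defect of the truncated densities. Since the approximate pair $(\n_{\de},\u_{\de})$ already satisfies the renormalized continuity equation (granted by Lemma \ref{lem4.2} applied at the $\de$-level), the task reduces to passing to the limit $\de\to 0$ in that identity and then upgrading the class of admissible renormalizers from the truncation family $T_k$ to general $b$ with $b'$ of compact support (and finally to $b(z)=z$).

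First I would fix $k\ge 1$ and a smooth $b$ with $b'(z)=0$ for $z$ large, and test the $\de$-level renormalized identity for $b=T_k$ against arbitrary $\phi\in C_c^\infty(\r)$, after extending $(\n_\de,\u_\de)$ by zero outside $\mathcal{O}$. Using \eqref{6b10}--\eqref{6b14} together with \eqref{9.1}, I would pass to the weak limit to obtain
\be\la{pl1}
\div\bigl(\overline{T_k(\n)}\,\u\bigr)+\overline{\bigl(T_k'(\n)\n-T_k(\n)\bigr)\div \u}=0\qquad\text{in }\mathcal{D}'(\r).
\ee
Since $\overline{T_k(\n)}\in L^\infty$ and $\u\in H^1_0$, the DiPerna--Lions commutator lemma applies to \eqref{pl1}, so that $\overline{T_k(\n)}$ is itself a renormalized solution of a transport equation with source. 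Separately, applying Lemma \ref{lem4.2} at the limit level to the pair $(\n,\u)$ with $b=T_k$ is not yet legal because we only know $\n\in L^{\g s}$ with $\g s$ possibly close to $3/2$; this is precisely why the truncation route is necessary.

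The central step is then to compare the weak limit $\overline{T_k(\n)}$ with $T_k(\n)$ via the oscillation defect
\be\la{pl2}
\mathrm{osc}_{\g+1}[\n_\de\to\n](\mathcal{O}):=\sup_{k\ge 1}\ \limsup_{\de\to 0}\|T_k(\n_\de)-T_k(\n)\|_{L^{\g+1}(\mathcal{O})},
\ee
which is finite by \eqref{s33}. Subtracting the identity \eqref{pl1} from the transport equation satisfied by $T_k(\n)$ (obtained by applying the renormalization lemma to $\overline{T_k(\n)}$ viewed as a Lebesgue function, then splitting $\overline{T_k(\n)}=T_k(\n)+(\overline{T_k(\n)}-T_k(\n))$) produces an error term supported on $\{\n>k\}$; using \eqref{pl2} and $\g+1>2$, its $\mathcal{D}'(\r)$ norm tends to zero as $k\to\infty$. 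Standard approximation of a general admissible $b$ by cut-offs of $T_k$-type then yields \eqref{wq1}, and the case $b(z)=z$ follows from an additional approximation exploiting $\n\in L^{\g s}$ with $\g s>1$, exactly as in \cite[Ch.~4]{novo}.

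The main obstacle is the control of the commutator/error term produced when one passes from the truncated identity \eqref{pl1} back to the renormalized equation for $\n$ itself: it is here that the uniform bound \eqref{s33} on the oscillation defect measure is indispensable, and it is only because $\g+1>2\ge \tfrac{2\g}{2\g-1}$ for every $\g>1$ that the error is integrable enough to vanish in the limit $k\to\infty$. Apart from this, the argument is essentially a quotation of the Feireisl--Novotn\'y--Petzeltov\'a machinery, and indeed the statement is cited from \cite{novo}; the presence of the $Q$-tensor and concentration $c$ enters only through the already-established bounds \eqref{ss1} and therefore does not affect the renormalization step itself.
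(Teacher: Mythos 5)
Your proposal is correct and takes essentially the same route as the paper: pass to the weak limit in the $\de$-level renormalized continuity equation with $b=T_k$, use the DiPerna--Lions commutator lemma on the bounded function $\overline{T_k(\n)}$, control the defect $\overline{T_k(\n)}-T_k(\n)$ via the uniform $L^{\g+1}$ oscillation bound \eqref{s33}, and send $k\to\infty$ before enlarging the class of renormalizers — exactly the Feireisl--Novotn\'y machinery that the paper invokes by reference to \cite{novo}. One small remark: the parenthetical inequality ``$\g+1>2\ge \tfrac{2\g}{2\g-1}$'' is an unnecessary detour; the relevant point is simply that the H\"older pairing of the $L^{\g+1}$ oscillation defect against $\div\u_\de\in L^2$ is integrable once $\g+1\ge 2$, which holds for all $\g\ge 1$, together with the uniform smallness of $\{\n_\de>k\}$ granted by $\n_\de$ bounded in $L^{\g s}$ with $\g s>1$.
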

\begin{proof} 
Thanks to Lemma \ref{lem4.2},  we see that $(\n_{\de},\u_{\de})$ is  a renormalized solution.
If we multiply  the equation \eqref{wq}  satisfied by $(\n_{\de},\u_{\de})$  by  $T_{k}(\n_{\de})$, and use  \eqref{7.0}, \eqref{6b10}-\eqref{6b14},  \eqref{s33}, we conclude \eqref{wq1} by   taking $\de\rightarrow0$ and then $k\rightarrow \infty$. The detailed proof may be found in \cite{novo}. 
\end{proof}

 In order  to complete the proof of   Theorem \ref{t} we only need to  verify
\be\la{s32}  \overline{\n^{\g}}=\n^{\g}.\ee
To this end,  
it suffices to prove the strong  convergence of $\n_{\de}$ in $L^{1}$ by  \eqref{6b15}.
The idea is to compare the limit of the renormalized solution  $(\n_{\de},\u_{\de})$ with $(\n,\u).$ 
In more detail,  we    introduce
 \bnn L_{k}=\left\{\ba &z\ln z,\quad\quad\quad\quad z\le k;\\
 &z\ln k+z\int_{k}^{z}\frac{T_{k}(s)}{s^{2}}ds,\quad z\ge k.\\
 \ea\right.\enn
A direct computation shows that \bnn C([0,\infty))\cap C^{1}((0,\infty))\ni b_{k}(z)=L_{k}(z)-\left(\ln k+\int_{k}^{k+1}\frac{T_{k}(s)}{s^{2}}+1\right)z,\enn
and moreover,    $b'(z)=0$ if $z\ge k+1$ and  $b_{k}'(z)z-b_{k}(z)=T_{k}(z).$
In view of  Lemma \ref{lem4.2} and Lemma \ref{lem5.2}, one has
\bnn \ba 0&=\div (b_{k}(\n) \u)+ T_{k}(\n)\div \u\\
&=\div (L_{k}(\n) \u)+ T_{k}(\n)\div \u,\quad {\rm in}\,\,\,\mathcal{D}(\r),\ea\enn
and
\bnn \ba 0 =\div (L_{k}(\n_{\de}) \u_{\de})+ T_{k}(\n_{\de})\div \u_{\de},\quad {\rm in}\,\,\,\mathcal{D}(\r).\ea\enn Integration of  the difference of   above two equations  leads to
 \bnn \int \left(T_{k}(\n)\div \u-T_{k}(\n_{\de})\div \u_{\de}\right)=0, \enn
which along with \eqref{7.12} and the fact $\div \u \in L^{2}$ implies   \be\ba\la{8.4x} C\| T_{k}(\n) -\overline{T_{k}(\n)} \|_{L^{2}}
&\ge (2\mu+\lambda) \int \left(T_{k}(\n) -\overline{T_{k}(\n)}\right)\div \u\\
&=(2\mu+\lambda)  \int \left(T_{k}(\n_{\de})\div \u_{\de}-\overline{T_{k}(\n)}\div \u\right)\\
  &\ge  \int \left(T_{k}(\n_{\de})-T_{k}(\n)\right)^{\g+1}.\ea\ee
Recalling Proposition  \ref{p2}, we have  $\|\n_{\de}\|_{L^{\g}}  \le C.$  Thus, the limit
\be\la{ddf}\ba \lim_{k\rightarrow\infty} \| T_{k}(\n_{\de})-\n_{\de}\|_{L^{1}}&
 = \| T_{k}(\n_{\de})-\n_{\de}\|_{L^{1}(\{\n_{\de}\ge k\})} \\
 &\le 2\lim_{k\rightarrow\infty} \|\n_{\de}\|_{L^{\g}(\{\n_{\de}\ge k\})} \\&\le C\lim_{k\rightarrow\infty}k^{1-\g}\|\n_{\de}\|_{L^{\g}(\{\n_{\de}\ge k\})}^{\g} =0 \ea\ee   is   uniform  in $\de$.
 In a similar way,
 \be\la{7.6} \ba \lim_{k\rightarrow\infty}\| T_{k}(\n)-\n\|_{L^{1}}=0.\ea\ee
Making use of  \eqref{bb10}-\eqref{7.6}, and
 \be\la{bb10}\ba \|T_{k}(\n) - T_{k}(\n_{\de})\|_{L^{1}}
    &\le C \|T_{k}(\n)-\n+\n_{\de}-\overline{T_{k}(\n)}\|_{L^{1}} \\
 &\le C  \left(\|T_{k}(\n)-\n\|_{L^{1}}
 +\lim_{\de\rightarrow0}\|T_{k}(\n_{\de})-\n_{\de}\|_{L^{1}} \right),\ea\ee we conclude
\bnn \ba& \lim_{\de\rightarrow0}\|\n_{\de}-\n\|_{L^{1}}\\&
\le \lim_{k\rightarrow\infty}\lim_{\de\rightarrow0}\left(\|\n_{\de}-T_{k}(\n_{\de})\|_{L^{1}}
+\|T_{k}(\n_{\de})-T_{k}(\n)\|_{L^{1}}+\|T_{k}(\n)-\n\|_{L^{1}}\right)\\&
=0.\ea\enn
The proof of Theorem \ref{t} is completed.

%
%
%
%
%
 
 \bigskip

\section*{Acknowledgements}
The research of Z. Liang was supported by the fundamental research funds for central universities (JBK 2202045).
A.  Majumdar (AM) acknowledges support from the University of Strathclyde New Professors Fund and a University of Strathclyde Global Engagement Grant. AM is also supported by a Leverhulme International Academic Fellowship. AM gratefully acknowledges hospitality from the University of Oxford and IIT Bombay, through her OCIAM Visiting Fellowship and Visiting Professorship at IIT Bombay.
The research  of D. Wang was partially supported by the National Science Foundation under grant  DMS-1907519.

\bigskip

 \begin{thebibliography}{99}
\bibitem{ad} R. Adams,    Sobolev spaces, New York: Academic Press, 1975.

\bibitem{live2} A. Ahmadi; M. C. Marchetti; T. B. Liverpool, Hydrodynamics of isotropic and liquid crystalline active polymer solutions,
Phys. Rev. E 74 (2006), 061913.
 
 \bibitem{an}  L.  Antanovskii,  A phase field model of capillarity,  Phys. Fluids A   {7} (1995), 747-753.

\bibitem{AM} D. Anderson;  G.  McFadden;  A.  Wheeler, { Diffuse-interface methods in fluid mechanics. Annual review of fluid mechanics,}     Annu. Rev. Fluid Mech. Annual Reviews  {30}  (1998), 139-165.

\bibitem{B-M-2010} J. M. Ball;  A. Majumdar,
Nematic liquid crystals: from Maier-Saupe to a continuum theory,
Molecular Crystals and Liquid Crystals, 525 (2010), 1-11.

\bibitem{bat} G.  Batchelor, { An Introduction to Fluid Dynamics,}  Cambridge University Press, Cambridge, 1999.

\bibitem{B-E-1994} A. N. Beris;  B. J. Edwards,
Thermodynamics of Flowing Systems with Internal Microstructure,
Oxford University Press: New York, 1994.

\bibitem{B-T-Y-2014} M. L. Blow; S. P. Thampi;  J. M. Yeomans,
{Biphasic, lyotropic, active nematics}, Phys. Rev. Lett. 113 (2014), 248303.

    \bibitem{chen0} G. Chen; A. Majumdar; D. Wang; R. Zhang,   Global existence and regularity of solutions for the active liquid   crystal, J. Diff. Equa., 50(4), (2017), 202-239.

\bibitem{chen} G. Chen; A. Majumdar; D. Wang; R. Zhang,   Global weak solutions for the compressible active liquid crystal system, SIAM J. Math. Anal., 263, (2018), 3632-3675.

    \bibitem{liang} R.-M. Chen; Z Liang; D. Wang; R. Xu, Energy equality in compressible fluids with physical boundaries, SIAM J. Math. Anal., 52 (2020),  1363-1385.


\bibitem{ds} H.  Davis; L.  Scriven,   Stress and structure in fluid interfaces,  Adv. Chem. Phys.
 {49}  (1982), 357-454.

\bibitem{dre} W. Dreyer; J. Giesselmann;  C. Kraus,
  A compressible mixture model with phase transition,  Phys.  D.  {273-274}   (2014), 1-13.

\bibitem{eb}
D.  Edwards;  H. Brenner;  D. Wasan,  { Interfacial Transport Process and Rheology,} Butterworths/Heinemann, London, 1991.

\bibitem{evans}
L.   Evans,  Partial differential equations. Second edition, Graduate Studies in Mathematics   {19}. American Mathematical Society, Providence, RI, 2010.

 \bibitem{fei} E. Feireisl, {Dynamics of viscous compressible fluids.}  Oxford University Press (2004).

\bibitem{fei1} E. Feireisl; A. Novotn\'{y}; H. Petzeltov\'{a}, On the existence of globally defined weak solutions to the Navier-Stokes equations, J. Math. Fluid Mech. \textbf{3}  (2001),  358-392.

    \bibitem{fei5} E. Feireisl, A. Novotn\'{y},  Singular Limits in Thermodynamics of Viscous Fluids. Advances
in Mathematical Fluid Mechanics, Birkh\"{a}user, Basel, 2009.


 \bibitem{freh} J. Frehse;  M. Steinhauer; W. Weigant,  {The Dirichlet problem for steady viscous compressible flow in three dimensions}, J. Math. Pures Appl.  {97}  (2012),   85-97.

 \bibitem{genn} P. De Gennes; J. Prost, The physics of Liquid crystals, The Clarendon Press, Oxford University Press, New York, 1995.

\bibitem{gt} D. Gilbarg;  N.  Trudinger, {Elliptic Partial Differential Equations of Second Order,}  2nd edition, Grundlehren Math. Wiss., vol. 224, Springer-Verlag, Berlin, Heidelberg, New York, 1983.

\bibitem{gi} L. Giomi; L. Mahadevan; B. Chakraborty;  M.   Hagan, Banding, excitability and chaos in active
nematic suspensions, Nonlinearity, 25 (2012), 2245.

\bibitem{HMP2017}
D. Henao;  A.  Majumdar;  A.  Pisante,  
Uniaxial versus biaxial character of nematic equilibria in three dimensions,  
Calc. Var. Partial Differential Equations 56 (2017), no. 2, Paper No. 55.

\bibitem{jiang2} S. Jiang;  P. Zhang, { Global spherically symmetry solutions of the compressible isentropic Navier-Stokes equations},  Comm. Math. Phys.  215   (2001),  85-97.

\bibitem{jz} S. Jiang; C. Zhou, On the existence theory to the three-dimensional steady compressible Navier-Stokes equations, Ann. Inst. H. Poincare\'{e} Anal. Non Lin\'{e}aire 28 (2011) 485-498.

\bibitem{K-F-K-L-2008} J. Kierfeld;  K. Frentzel; P. Kraikivski;   R. Lipowsky,
{Active dynamics of filaments in motility assays}, Eur. Phys. J. Special Topics, 157 (2008), 123.

\bibitem{lian} W. Lian; R. Zhang, Global weak solutions to the active hydrodynamics of liquid crystals., J. Differential Equations 268(8) (2020),   4194-4221.

\bibitem{lw} Z. Liang; D.  Wang,   Stationary Cahn-hilliard-Navier-Stokes equations for the diffuse interface model of compressible flows,  Math. Mod. Meth. Appl. Sci.  {30} (2020), 2445-2486.

\bibitem{lw1} Z. Liang; D.  Wang,   Stationary weak solutions for compressible Cahn-Hilliard/Navier-Stokes equations, 
 J. Nonlinear Sci. (2022) 32:41. 
    
 \bibitem{L-W-2014} F. Lin; C. Wang,
Recent developments of analysis for hydrodynamic flow of nematic liquid crystals,
Philos. Trans. R. Soc. Lond. Ser. A Math. Phys. Eng. Sci. 372 (2014), no. 2029, 20130361, 18 pp. 

\bibitem{lion} P. Lions,
Mathematical topics in fluid mechanics. Vol. 2. Compressible models. Oxford Lecture Series in Mathematics and its Applications, 10. Oxford Science Publications. The Clarendon Press, Oxford University Press, New York, 1998.

\bibitem{live1} T. B. Liverpool; M. C. Marchetti, Rheology of Active Filament Solutions, Phys. Rev. Lett. 97 (2006), 268101.

 \bibitem{mjr} M. C.  Marchetti; J.  F. Joanny; S. Ramaswamy, T. B.  Liverpool;
J. Prost; M. Rao; R. Aditi Simha, Hydrodynamics of soft active matter, Rev. Mod. Phys. 85 (2013), 1143.

\bibitem{mpm} P.  B. Mucha;  M. Pokorn\'y;  E. Zatorska, Existence of stationary weak solutions for compressible heat conducting flows. Handbook of Mathematical Analysis in Mechanics of Viscous Fluids, 2595-2662, Springer, Cham, 2018.

\bibitem{P-K-O-2004} W. F. Paxton; K. C. Kistler; C. C. Olmeda; A. Sen; S. K. St. Angelo;
Y. Cao; T. E. Mallouk; P. E. Lammert; V. H. Crespi,
{Catalytic nanomotors: autonomous movement of striped nanorods}, J. Amer. Chem. Soc. 126 (41) (2014),  13424--13431.

\bibitem{QW2021}
Z. Qiu; Y. Wang,  
Martingale solution for stochastic active liquid crystal system, 
Discrete Contin. Dyn. Syst. 41 (2021), no. 5, 2227--2268. 

 \bibitem{novo}
A. Novotn\'y;  I. Stra\v skraba,  {Introduction to the Mathematical Theory of Compressible Flow.}
Oxford Lecture Series in Mathematics and its Applications, 27. Oxford University Press, Oxford, 2004.

 \bibitem{novo1}
S. Novo;  A. Novotn\'y,  On the existence of weak solutions to the steady compressible Navier-Stokes equations when the density is not square integrable,  J. Math. Fluid Mech. {42} 3  (2002),  531-550.

\bibitem{plot} P.  Plotnikov;  J. Sokolowski,  {Concentrations of solutions to time-discretized compressible Navier-Stokes equations}, Comm. Math. Phys.   {258}(3)  (2005),  567-608.

 \bibitem{pw} P.  Plotnikov; W. Weigant,   Steady 3D viscous compressible flows with adiabatic exponent $\g\in (1,\infty)$,   J. Math. Pures Appl.  104,(2015) 58-82.

\bibitem{R-Y-2013} M. Ravnik; J. M. Yeomans,
{Confined active nematic flow in cylindrical capillaries}, Phys. Rev. Lett. 110 (2013), 026001.

 \bibitem{sai} D. Saintillan;  M.   Shelley, Instabilities and pattern formation in active particle suspensions:
Kinetic theory and continuum simulations, Phys. Rev. Lett. 100 (2008), 178103.

\bibitem{stein} E. Stein, { Singular integrals and differentiability properties of functions,} Princeton Univ. Press, Princeton, New Jersey, 1970.

\bibitem{XuX2022}
X. Xu,  
Recent analytic development of the dynamic Q-tensor theory for nematic liquid crystals,
Electron. Res. Arch. 30 (2022), no. 6, 2220-2246. 

    \bibitem{zie} W. P.  Ziemer, Weakly Differentiable Functions. Springer, New York, 1989.

\end {thebibliography}
\end{document}